\numberwithin{equation}{section}
\newtheorem{theorem}[equation]{Theorem}
\newtheorem{corollary}[equation]{Corollary}
\newtheorem{prop}[equation]{Proposition}
\newtheorem{lemma}[equation]{Lemma}
\theoremstyle{definition}
\newtheorem{remark}[equation]{Remark}
\newtheorem{notation}[equation]{Notation}
\newtheorem{defn}[equation]{Definition}
\newtheorem{example}[equation]{Example}
\newcommand{\daniele}[1]{{\color{Red} \sf $\clubsuit\clubsuit\clubsuit$ Daniele: [#1]}}
\newcommand{\ol}[1]{\overline{#1}}
\newcommand{\mc}[1]{\mathcal{#1}}
\newcommand{\mf}[1]{\mathfrak{#1}}
\newcommand{\R}{\mathbb R}
\newcommand{\Q}{\mathbb Q}
\newcommand{\Z}{\mathbb Z}
\newcommand{\ints}{\mathbb Z}
\newcommand{\N}{\mathbb N}
\newcommand{\aff}{\mathbb A}
\renewcommand{\P}{\mathbb P}
\newcommand{\proj}{\mathbb P}
\renewcommand{\phi}{\varphi}
\newcommand{\Spec}{\mathrm{Spec}  }
\newcommand{\Frac}{\mathrm{Frac}  }
\newcommand{\frakY}{\mathfrak Y}
\newcommand{\frakX}{\mathfrak X}
\newcommand{\calL}{\mathcal L}
\newcommand{\calB}{\mathcal B}
\newcommand{\calM}{\mathcal M}
\newcommand{\calP}{\mathcal P}
\newcommand{\calO}{\mathcal O}
\newcommand{\panL}{\P^{1, \mathrm{an}}_L}
\newcommand{\scrH}{\mathscr H}
\DeclareMathOperator{\Jac}{Jac}
\DeclareMathOperator{\Gal}{Gal}
\DeclareMathOperator{\Spf}{Spf}
\DeclareMathOperator{\reg}{reg}
\DeclareMathOperator{\stab}{st}
\DeclareMathOperator{\an}{an}
\DeclareMathOperator{\red}{red}
\DeclareMathOperator{\Aut}{Aut}
\DeclareMathOperator{\lcm}{lcm}
\DeclareMathOperator{\Proj}{Proj}
\DeclareMathOperator{\PGL}{PGL}
\DeclareMathOperator{\ord}{ord}
\title{Stabilization indices of potentially Mumford curves}
\author{Andrew Obus}
\address{Baruch College (CUNY)}
\curraddr{1 Bernard Baruch Way. New York, NY 10010, USA}
\email{andrewobus@gmail.com}
\thanks{The first author was supported by NSF grants DMS-1900396 and
  DMS-2047638 as well as
  a grant from the Simons Foundation (\#706038, AO). The second author was supported by the EPSRC New Horizons Grant EP/V047299/1.}
\author{Daniele Turchetti}
\address{Durham University}
\curraddr{Stockton Road, Durham DH1 3LE, United Kingdom}
\email{daniele.turchetti@durham.ac.uk}
\subjclass[2010]{Primary: 11G20, 14B05; Secondary: 14H25, 14J17}
\keywords{Mumford curve, Schottky group, regular model, arithmetic
  surface, semi-stable model, stability index, Mac Lane valuation}
\date{\today}
\begin{document}

\begin{abstract}
  Let $X$ be a smooth projective curve over a complete discretely
  valued field $K$.  Let $L/K$ be the minimal extension such that $X
  \times_K L$
  has a semi-stable model, and write $e(L/K)$ for the ramification
  index of $L/K$.  Let $e(X)$ be the so-called
  ``stabilization index'' of $X$, defined by Halle and Nicaise as the $\lcm$ of the
  multiplicities of the ``principal'' irreducible components of a minimal regular snc-model
  of $X$.

  It is known that if $L/K$ is tame, then $e(X) = e(L/K)$.  If one
  drops the tameness assumption, but instead assumes that $X$ has index one and potentially
  multiplicative reduction, 
  Halle and Nicaise ask if the equality $e(X) =
  e(L/K)$ still holds.  We prove that $e(X)$ \emph{divides} $e(L/K)$ in this
  situation, but we give examples, in every residue characteristic, of
  $X$ with $K$-rational points and potentially multiplicative
  reduction such that $e(X) \neq e(L/K)$.
\end{abstract}

\maketitle

\tableofcontents

\section{Introduction}\label{Sintro}

Let $K$ be a complete discretely
  valued field with valuation $v_K$, uniformizer $\pi_K$, valuation ring $\mc{O}_K$ and algebraically
  closed residue field $k$.  This paper investigates the extent to which the
minimal regular model of a smooth projective curve $X$ over $K$ sheds light on the minimal
extension $L/K$ necessary for $X$ to admit a semi-stable model.  We answer a question of Halle and Nicaise (\cite[Question 10.1.1]{HalleNicaise}) in the
negative, showing that, even when $X$ has potentially multiplicative
reduction and $K$-rational points, the so-called \emph{stabilization index} of $X$
need not be equal to $[L:K]$.  However, we show that the stabilization
index of $X$ does
divide $[L:K]$ whenever $X$ has potentially multiplicative reduction.

Let us start by defining the terms above.  Let $X/K$ be a smooth, projective,
geometrically connected curve.
A \emph{normal model} $\mc{X}$ of $X$ is a flat,
normal, proper $\mc{O}_K$-curve with generic fiber isomorphic to $X$.
Each irreducible component $\ol{V}$ of the special fiber of $\mc{X}$
has a multiplicity, given by the degree of the divisor of $\pi_K$ on
$\ol{V}$. 

A normal model $\mc{X}$ of $X$ is called \emph{regular} if it is a
regular scheme.  If the special fiber of a regular model $\mc{X}$ has strict normal
crossings, then $\mc{X}$ is called a \emph{regular snc-model}.  It is
well-known that $X$ possesses a regular snc-model, and that there is a
unique minimal such model if $g(X) \geq 1$ (\cite[Lemma~10.1.8]{LiuBook}).

Suppose that $\mc{X}$ is a regular snc-model of $X$ with special fiber
$\mc{X}_k$.  An irreducible component $\ol{V}$ of $\mc{X}_k$ is called
\emph{principal} if $\ol{V}^{\red}$ has genus at least $1$ or if $\ol{V}$
contains at least $3$ singular points of $\mc{X}_k^{\red}$.  The
\emph{stability index} $e(\mc{X})$ is the lcm of the multiplicities of
the principal components.   We define $e(X) = e(\mc{X})$, where
$\mc{X}$ is any minimal regular snc-model of $X$ (as was mentioned above, there is
only one such model when $g(X) \geq 1$).

A normal model $\mc{X}$ of $X$ 
is called \emph{semi-stable} if its special fiber $\mc{X}_k$ is reduced and has
only ordinary double points for singularities.  It is well-known that,
if $g(X) \geq 2$ or $X$ is an elliptic curve, then there exists a minimal finite
extension $L/K$ such $X_L \colonequals X \times_K L$ has a semi-stable
model, and
furthermore that $L/K$ is Galois.  If $g(X) \geq 2$, there is a
minimal such model $\mc{X}_{\mc{O}_L}^{\stab}$ called the \emph{stable
  model} (\cite[Corollary 2.7]{DeligneMumford}).  The group $\Gal(L/K)$ acts on
$\mc{X}^{\stab}$, and this action is faithful on the
special fiber, see, e.g., \cite[Proposition 2.2.2]{Raynaud}.

\begin{remark}\label{Rfaithful}
 Note that if $L'/K$ is a larger Galois extension, then
$\Gal(L'/K)$ does \emph{not} act faithfully on the special fiber.
More specifically, the action of $\Gal(L'/L)$ is trivial because each
smooth point on the special fiber $\mc{X}_k$ is a specialization of an
$L$-rational point by Hensel's lemma, and the smooth points are dense
in $\mc{X}_k$ since the model is
semi-stable, in particular reduced.
\end{remark}

If $X$ admits a semi-stable model $\mc{X}$ where every irreducible component of the
special fiber $\mc{X}_k$ has genus zero, then $X$ is said to have
\emph{multiplicative reduction}, or to be a \emph{Mumford curve}.  If the same is true after a base
change $L/K$, then $X$ is said to have \emph{potentially
  multiplicative reduction}.

If $g(X) \geq 2$ or $X$ is an
elliptic curve, and if the minimal extension $L/K$ over which $X
\times_K L$ admits a semi-stable model is \emph{tame}, then by
\cite[Proposition~4.2.2.4]{HalleNicaise}, $[L:K] = e(X)$.
In general, however,
$[L:K]$ need not equal $e(X)$.  For instance
\cite[Examples 4.2.2.5, 4.2.2.6]{HalleNicaise} (see also \cite[p.\ 46,
Footnote 5]{Lorenzini}), show that one can have
$([L:K], e(X)) = (2,6)$ or $(6,2)$. Both
examples assume $k$ has characteristic $2$.  In the same
book, Halle and Nicaise ask (\cite[Question 10.1.1]{HalleNicaise})
whether $X$ having potentially multiplicative reduction and index one
(i.e., having a rational point over two different extensions of $K$ of
relatively prime degree)
is sufficient to guarantee that $[L:K] = e(X)$. Our first theorem answers
this question in the negative.

\begin{theorem}\label{Tnegative}
For every prime $p$, there exists $K$ as above with characteristic $p$
residue field, and a smooth, projective, geometrically connected
$K$-curve $X$ with potentially multiplicative reduction and a
$K$-rational point, such that $e(X) \neq [L:K]$, where $L/K$ is the
minimal extension such that $X \times_K L$ admits a stable model.
\end{theorem}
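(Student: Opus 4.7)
The plan is, for each prime $p$, to construct $X/K$ explicitly as a Galois descent of a Mumford curve via a Schottky group normalized by a lift of $\Gal(L/K)$, and to compute the minimal regular snc-model of $X$ using Mac Lane valuations.

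First, I would fix $K$ of residue characteristic $p$ and a totally wildly ramified Galois extension $L/K$ (for instance, cyclic of order $p$). I would then choose a Schottky group $\Gamma\subset\PGL_2(L)$ of rank $g\geq 2$ normalized by a subgroup $\widetilde G\subset\PGL_2(L)$ with $\widetilde G/\Gamma\cong\Gal(L/K)$ acting on $X_L=\Omega_\Gamma/\Gamma$ as the descent datum producing $X/K$. Placing a $\widetilde G$-fixed point of $\P^1$ over a $K$-rational point of $\P^1_K$ (and away from the limit set of $\Gamma$) yields a $K$-rational point on $X$. Ensuring that $\widetilde G$ acts nontrivially on the dual graph of the stable reduction of $X_L$ guarantees that $L/K$ is the minimal semi-stabilizing extension.

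Next, I would compute the minimal regular snc-model $\mc{X}$ of $X$ over $\mc{O}_K$ by forming the quotient of $\mc{X}^{\stab}$ by $\Gal(L/K)$, resolving its singularities, and contracting non-essential exceptional divisors. Mac Lane's inductive construction presents each component of $\mc{X}$ as an explicit discrete valuation on $K(X)$ extending $v_K$, and its multiplicity can be read off from the ramification data. The Schottky group would be arranged so that the only components of $\mc{X}$ attaining multiplicity $[L:K]$ are non-principal, i.e.\ genus-zero components of $\mc{X}_k^{\red}$ meeting at most two other components, so that they do not contribute to $e(X)$. Consequently the $\lcm$ of the multiplicities of principal components would be a proper divisor of $[L:K]$, giving $e(X)\neq[L:K]$.

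The main obstacle is to arrange all three properties simultaneously: (i) minimality of $L/K$ as the semi-stabilizing extension, (ii) existence of a $K$-rational point on $X$, and (iii) the maximal-multiplicity components in $\mc{X}$ being non-principal. Condition (iii) is the most delicate: it constrains the $\Gal(L/K)$-fixed part of the dual graph of $\mc{X}^{\stab}_k$ to consist of terminal or bivalent $\P^1$'s, and verifying it reduces to a combinatorial analysis of the Mac Lane valuations describing $\mc{X}$. Producing such Schottky data explicitly for each residue characteristic $p$, particularly when $p$ is small and wild-ramification constraints are tight, is where most of the computational work will concentrate.
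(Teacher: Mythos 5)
There is a genuine gap, and it is at the very first step: your proposed choice of $L/K$ ``cyclic of order $p$'' cannot produce a counterexample. By Saito's theorem (cited in \S\ref{Smotivation}), $p \nmid [L:K]$ if and only if $p \nmid e(X)$; so if $[L:K]=p$ then $p \mid e(X)$. On the other hand, for a curve with potentially multiplicative reduction Theorem~\ref{Tmain} gives $e(X) \mid [L:K] = p$. Together these force $e(X) = p = [L:K]$, so no Schottky group over a degree-$p$ extension, however cleverly arranged, can give $e(X) \neq [L:K]$. The examples in \S\ref{Sexample} instead take $\Gal(L/K) \cong \Z/p \times \Z/p$, with the two cyclic factors acting nontrivially on \emph{different} parts of the skeleton: every relevant irreducible component corresponds to a type~2 point defined over one of the intermediate degree-$p$ subfields, which by Lemma~\ref{lem:multiplicities} caps its multiplicity at $p$, while faithfulness of the full $(\Z/p)^2$-action on the stable special fiber forces $[L:K]=p^2$. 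That tension --- each component individually seeing only a degree-$p$ subextension, but the whole group acting faithfully --- is the engine of the counterexample, and it is structurally unavailable when $\Gal(L/K)$ has prime order.

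A secondary inaccuracy: your condition (iii), that the components of multiplicity $[L:K]$ be non-principal, is not what happens in the paper. In the actual examples \emph{no} component of the quotient model, principal or not, attains multiplicity $[L:K]=p^2$; all have multiplicity dividing $p$ (Lemmas~\ref{Lallcomponents} and \ref{lem:generalXmult}). The genuinely delicate point you do not address is controlling the multiplicities of the \emph{exceptional} components created when resolving the wild quotient singularities of $\mc{Y}''/\Gal(L/K)$. This is handled via a carefully chosen semi-stable blow-down $\mc{Y}''$ (contracting exactly the components whose minimal field of definition is all of $L$), the Mac Lane--valuation bound of Proposition~\ref{Pmaclaneresolution}, and the analysis of weak wild arithmetic quotient singularities in Proposition~\ref{Pexceptionalmultiplicity2}. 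Your appeal to ``reading off multiplicities from ramification data'' points in the right direction but does not substitute for this argument.
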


Our second theorem shows that $e(X)$ and $[L:K]$ are still
related, even without the index one assumption.

\begin{theorem}\label{Tmain}
If $X/K$ is a smooth, projective geometrically connected $K$-curve of
genus $g \geq 1$ with potentially
multiplicative reduction, and $L/K$ is defined as in Theorem~\ref{Tnegative}, then $e(X) \mid [L:K]$.
\end{theorem}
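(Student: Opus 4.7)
The plan is to descend from the stable model over $\mc{O}_L$ to a regular snc-model of $X$ over $\mc{O}_K$ via the Galois quotient and a resolution of singularities, tracking multiplicities of components at each step.

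Set $G := \Gal(L/K)$, which by Remark~\ref{Rfaithful} acts faithfully on the special fiber of $\mc{X}^{\stab}_{\mc{O}_L}$, and form the quotient $\mc{Y} := \mc{X}^{\stab}_{\mc{O}_L}/G$, a normal proper model of $X$ over $\mc{O}_K$. For each irreducible component $\ol{W}$ of $\mc{Y}_k$, pick a preimage $\ol{V}$ in $\mc{X}^{\stab}_{\mc{O}_L,k}$ and let $I_{\ol{V}} \subseteq G$ denote the inertia group at its generic point. Since the special fiber of $\mc{X}^{\stab}_{\mc{O}_L}$ is reduced and $v_L(\pi_K) = [L:K]$, pulling $\divi(\pi_K)$ back through the quotient map and equating coefficients gives
\[
m_{\ol{W}} \;=\; \frac{[L:K]}{|I_{\ol{V}}|},
\]
which already divides $[L:K]$.

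Next take a resolution $\tilde{\mc{Y}} \to \mc{Y}$ producing a regular snc-model of $X$. Strict transforms of components of $\mc{Y}_k$ keep their multiplicities, while new exceptional components appear at the singular points of $\mc{Y}$, namely the images of the nodes of $\mc{X}^{\stab}_{\mc{O}_L}$ and the images of smooth points of $\mc{X}^{\stab}_{\mc{O}_L,k}$ fixed by non-trivial subgroups of $G$. Because $X$ has potentially multiplicative reduction, every component of $\tilde{\mc{Y}}_k$ has geometric genus zero, so a principal component is exactly one that meets at least three others. The crucial claim is that every exceptional component introduced by this resolution meets at most two other components of $\tilde{\mc{Y}}_k$, and is therefore non-principal. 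In tame residue characteristic this follows from the classical Hirzebruch--Jung description of cyclic quotient singularities as linear chains of smooth rational curves; in the wild case the paper's Mac Lane valuation framework is expected to provide explicit descriptions of the exceptional valuations needed to verify the chain property.

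Granting this claim, every principal component of $\tilde{\mc{Y}}$ is the strict transform of a component of $\mc{Y}$ and so has multiplicity dividing $[L:K]$. Contracting non-principal $(-1)$-curves iteratively yields the minimal regular snc-model $\mc{X}$, and such contractions both preserve multiplicities of surviving components and cannot turn a non-principal component into a principal one. Hence every principal component of $\mc{X}$ has multiplicity dividing $[L:K]$, so $e(X)$, the $\lcm$ of these multiplicities, divides $[L:K]$. The main obstacle is controlling the combinatorics of the exceptional divisors at wild quotient singularities, where Hirzebruch--Jung theory fails; verifying that no such exceptional component meets three or more others (or, alternatively, that its multiplicity already divides $[L:K]$) is where the bulk of the technical work, and the systematic use of Mac Lane valuations, is likely to reside.
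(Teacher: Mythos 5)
There is a genuine gap, and it sits exactly at the step you flag as the ``crucial claim.'' It is \emph{not} true that the exceptional components arising from resolving the quotient singularities of $\mc{Y}=\mc{X}^{\stab}_{\mc{O}_L}/G$ meet at most two other components: Hirzebruch--Jung chains are a tame phenomenon, and for wild quotient singularities the resolution graph is typically a star-shaped tree whose central component meets three or more others and is therefore principal. The paper's own examples confirm this: in Proposition~\ref{Pexceptionalmultiplicity2} (via the weak wild arithmetic quotient singularities of \cite{ObusWewers}) the exceptional divisor above a fixed point \emph{does} contain a principal component, of multiplicity $p$. So the theorem cannot be proved by showing exceptional components are never principal; one must instead \emph{bound the multiplicities} of the principal exceptional components, which is your parenthetical fallback --- and that is precisely the entire technical content of the proof, which your proposal defers to ``the Mac Lane valuation framework is expected to provide.'' Moreover, Mac Lane valuations describe components of models of $\proj^1_K$ only, so they cannot be applied directly to the quotient of the stable model of a genus $g\ge 1$ curve. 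To make them bite, the paper first transfers the local picture to $\proj^1$: it uses the Schottky uniformization to identify complete local rings of the quotient model of $X$ with those of a quotient of a model of $\proj^1_L$ (Lemma~\ref{lem:formallift}, Proposition~\ref{prop:semistable}, Lemma~\ref{Lsamelocalring}), reduces to the full-inertia case (Proposition~\ref{Petale}), lifts the semilinear $\Gal(L/K)$-action to $\proj^1_L$ fixing a point above the singularity (Proposition~\ref{Lfixedcomponent}), normalizes it to the standard arithmetic action via Hilbert's Theorem 90 (Lemma~\ref{Lhilb90}), and only then applies the Mac Lane resolution result (Proposition~\ref{Pmaclaneresolution}, Corollary~\ref{Carithmeticaction}), whose point is that principal exceptional components correspond to truncations of the Mac Lane valuations of the components through the singular point and hence have dividing multiplicity. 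None of this reduction appears in your outline, so the central divisibility claim is unsupported.

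Two smaller points. First, your formula $m_{\ol{W}}=[L:K]/|I_{\ol{V}}|$ should read $[L:K]/e$ with $e$ the ramification index at the generic point; in the wild setting the residue field extension can be inseparable, so $e$ need not equal $|I_{\ol{V}}|$ --- the divisibility $m_{\ol{W}}\mid[L:K]$ survives, which is all you need. Second, the assertion that every component of $\tilde{\mc{Y}}_k$ (including exceptional curves over wild quotient singularities) has genus zero, and the claim that contracting down to the minimal snc-model cannot create principal components, both need justification; the paper sidesteps the latter by citing the general fact that $e(X)$ divides the stability index of \emph{any} regular snc-model, and gets the former for free because its exceptional curves live on models of $\proj^1_K$.
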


\begin{remark}\label{Rwrongdivisibility}
By \cite[Example 4.2.2.5]{HalleNicaise} mentioned above,
Theorem~\ref{Tmain} does \emph{not} hold without assuming potentially
multiplicative reduction.
\end{remark}

\subsection{Motivation}\label{Smotivation}
The immediate motivation for our work is the question of Halle and
Nicaise mentioned above.  Let us place this in
context --- see \cite[\S10.1]{HalleNicaise} for more details. Recall that $K$ is a complete discretely valued field with
algebraically closed residue field $k$.  Suppose $A/K$ is an abelian
variety.  A central motivation of \cite{HalleNicaise} is to define a
``stabilization index'' $e(A)$ in a natural way.  The stabilization
index $e(A)$ should be an integer such that
for all $d$ prime to both $e(A)$ and char$(k)$, both the N\'{e}ron
component group $\Phi(A)$ and the motivic class of the identity
component of the N\'{e}ron model of $A$ behave well under base change by the unique
tame extension $K(d)/K$ in a precise sense, leading to 
rationality results for certain motivic zeta functions.  If $A$ acquires semi-stable reduction over a tamely ramified extension, or if $A$ acquires multiplicative
reduction over any extension, then setting $e(A) = [L:K]$
works, where $L/K$ is the minimal such extension.  Alternatively, if $A = \Jac(X)$ where $X$ is a smooth
projective curve with index one, then
setting $e(A)$ equal to the stabilization index $e(X)$ works.  As we
have mentioned, if $X$ has semi-stable reduction over a tame extension
$L/K$ (and thus $\Jac(X)$ does as well), then $e(X) = [L:K]$, so both
definitions of $e(A)$ coincide for $A = \Jac(X)$.  This leads one to ask if they also
coincide when $A = \Jac(X)$ for $X$ an index one curve with potentially multiplicative
reduction, but not necessarily realized over a tame extension of $K$.
Theorem~\ref{Tnegative} shows that this is not true in general.

Alternatively, for an arbitrary abelian variety $A$, one can try to define $e(A)$ to
be the lcm of denominators of the ``jumps'' of $A$.  These jumps are certain
rational numbers
encoding how the Lie algebras of the N\'{e}ron models of various base changes of $A$
fit together, see \cite[\S6]{HalleNicaise}.  If $e(A)$ is defined this
way, Halle and Nicaise show that $e(A) = e(X)$ when $A = \Jac(X)$
for \emph{any} smooth projective $K$-curve $X$.
Combining this with Theorem~\ref{Tnegative}, we see that there exists a curve $X_0$ with
potentially multiplicative reduction such that
if $A = \Jac(X_0)$, then $e(A) \neq [L:K]$, where $L/K$ is the minimal
extension over which $A$ (equivalently $X_0$) has
semi-stable reduction.

In fact, the jumps make sense for semi-abelian
varieties.  So letting $T$ be the torus uniformizing $A =  \Jac(X_0)$,
we have that $e(T) = e(A)$ where $e(T)$ is the lcm of the denominators
of the jumps for $T$, since the jumps depend only on Lie
algebras.  So $e(T) \neq
[L:K]$.  This answers
\cite[Question 10.1.2]{HalleNicaise} negatively.  We mention that Overkamp
(\cite[Corollary 2.2.2 and its remark]{Overkamp}) also constructs a torus answering
\cite[Question 10.1.2]{HalleNicaise} negatively.  His construction
uses completely different methods, and does not involve Jacobians of curves.

%
%
At a more basic level, this paper continues the study
originated by Saito in
\cite{Saito} and continued, e.g., in 
\cite{Lorenzini}, \cite{Halle}, and \cite{Nicaise}, of what one can say about the
relationship between the minimal extension $L/K$ over which a curve
$X$ attains stable reduction, the stable model $\mc{X}^{\stab}$
 of $X \times_K L$, and the minimal
regular snc-model $\mc{X}$ of $X$.  Let $p = \text{char}(k)$.
Saito originally showed that $p \nmid [L:K]$\footnote{equivalent to condition (1) of
\cite[Theorem 3]{Saito} by \cite[Theorem 1]{Saito} and \cite[Theorem 1.6]{Abbes}} 
if and only if $p \nmid e(X)$\footnote{equivalent to condition (2) of
\cite[Theorem 3]{Saito}}, in which case we have already seen that
$[L:K] = e(X)$.
In \cite[Question 1.4]{Lorenzini}, Lorenzini asked about a natural
generalization of Saito's criterion.  Namely, if char$(k) = p$, then do we have
$v_p(e(X)) \leq v_p([L:K])$?  Theorem~\ref{Tmain} answers this
question positively in the case of potentially multiplicative
reduction, and our examples also show that
the inequality can be strict.


\subsection{Techniques and outline}\label{Stechniques}

One strategy to establish a link between regular models and stable models
is as follows:  If $X$ is a smooth, projective, geometrically
connected $K$-curve with stable model $\mc{X}^{\stab}$ over $L$, then the resulting
quotient curve $\mc{X}^{\stab} / \Gal(L/K)$ is a normal
model of $X$ with quotient singularities.  So obtaining information
about $\mc{X}$ from $L/K$ and $\mc{X}^{\stab}$ is
tantamount to resolving quotient singularities.  Our proofs of
Theorems~\ref{Tnegative} and \ref{Tmain} rely on explicitly resolving
such singularities when $X \times_K L$ has multiplicative reduction.

Throughout the paper, we exploit Mumford's result that curves with
potentially multiplicative reduction, when viewed as analytic spaces,
can be realized as quotients of subsets of the projective line by
Schottky groups; the details of this story are reviewed in
\S\ref{Smumford}.  Indeed, all of our examples for
Theorem~\ref{Tnegative} are presented by specifying the relevant
Schottky group explicitly.
Mumford's perspective is useful because it ultimately allows us to
view the quotient singularities arising from stable models as lying on certain models of $\proj^1_K$, rather than
on higher genus curves.\footnote{A good example of this process in
  action is the proof of Proposition~\ref{Pexceptionalmultiplicity1}.}  This is useful because on models of $\proj^1_K$, explicit
resolution of singularities is well-understood, see in particular
\cite[Theorem~7.8]{ObusWewers} by Wewers and the first author.

The main consequence of \cite[Theorem~7.8]{ObusWewers} that we use in this
paper is Proposition~\ref{Pmaclaneresolution}, which states that,
given a normal model $\mc{X}$ of $\proj^1_K$ with special fiber $\mc{X}_k$, every principal
component of the special fiber of the minimal regular snc-resolution of $\mc{X}$ has
multiplicity dividing the lcm of the multiplicities of the components
of $\mc{X}_k$.  That is, the exceptional divisor introduces no
principal components with ``bad'' multiplicity.
The proof of Proposition~\ref{Pmaclaneresolution} (as well as that of
\cite[Theorem~7.8]{ObusWewers}) uses ``Mac Lane
valuations'', which are a convenient notation for expressing
so-called ``geometric valuations'' of $K(x)$, which correspond to
irreducible components of normal models of $\proj^1_K$.  In short, we
can explicitly write down the valuations corresponding to the
irreducible components of the exceptional divisor, and it is fairly
straightforward to see that the desired divisibilities of
multiplicities hold.

To go from Mumford's (analytic) perspective on curves with potentially
multiplicative reduction to explicit resolution of
singularities on (algebraic) models of the projective line, we
frequently pass between Berkovich-analytic, rigid-analytic,
formal, valuation-theoretic, and algebraic perspectives on curves.\footnote{It
is perhaps unfortunate that we have to involve all of these perspectives,
but since our results fundamentally depend on references to the theory
of Mac Lane valuations, we ultimately need to express everything in
terms of standard algebraic models of curves to avoid re-working that
theory.} 
The important ``dictionaries'' here are discussed in \S\ref{Smodels}.
In \S\ref{Smumford}, we give a detailed description of those aspects
of the theory of Mumford curves we use in the paper from the
Berkovich-analytic perspective.  In \S\ref{Smaclane}, we give a brief
introduction to Mac Lane valuations, and prove
Proposition~\ref{Pmaclaneresolution}.  

In
\S\ref{Sdescent}, if $L/K$ is a Galois extension, we give a sufficient
criterion for a Mumford curve over 
$L$ to be defined as a curve (not necessarily a Mumford curve) over
$K$.  This is of independent interest, and is perhaps well-known, but
we could not find a suitable reference.  In any case, it is
necessary to ensure that our examples, which are presented as Mumford
curves over $L$, in fact descend to curves over $K$. In \S\ref{Sexample}, we present our examples,
proving Theorem~\ref{Tnegative}, and in \S\ref{Sdivisibility}, we
prove Theorem~\ref{Tmain}.

  \begin{notation}
  Throughout the paper, we adopt the following conventions:
  we denote by $K$ a complete discretely valued field with valuation
  $v_K$, valuation ring $\mc{O}_K$, uniformizer $\pi_K$, and algebraically
  closed residue field $k$.  The valuation $v_K$ is normalized so that
  $v_K(K^{\times}) = \ints$.
  By a $K$-curve we mean a smooth, projective, geometrically connected algebraic curve over $K$.
  If $X$ is such a $K$-curve, we denote by $X^{\mathrm{an}}$ its Berkovich analytification.
\end{notation}

\subsection*{Acknowledgements}
We thank Lars Halvard Halle, Johannes Nicaise, and Lorenzo Fantini for
insightful conversations, as well as J\'er\^ome Poineau for useful
comments on a previous version of this text.  Lastly, we thank the
referee for useful comments.

\section{Preliminaries}\label{Sprelims}
\subsection{Models of curves and analytic geometry}\label{Smodels}

Let $X$ be a $K$-curve. 
To $X$, we associate its Berkovich analytification $X^{\mathrm{an}}$ that we briefly define below.
Note that in this process we always work on $K$ and do not need to  perform any base-change to its algebraic closure.
As a set, $X^{\mathrm{an}}$ consists of pairs $(\xi, |\cdot|_\xi)$
where $\xi$ is a point of $X$ and $|\cdot|_\xi: \kappa(\xi) \to
\R_{\geq 0}$ is an absolute value on the residue field of $\xi$
extending the absolute value of $K$.

The set $X^{\mathrm{an}}$ can be endowed with the structure of a locally ringed space that makes it a $K$-analytic space in the sense of Berkovich's theory \cite{Berkovich90}. 
In analogy with scheme theory, Berkovich's theory associates with a complete non-archimedean field $K$ its \emph{analytic spectrum} $\mc{M}(K)$. Saying that $\mf{X}$ is a $K$-analytic space is a shorthand for fixing a morphism of analytic spaces $\mf{X} \to \mc{M}(K)$.

The points of the $K$-analytic curve $X^{\mathrm{an}}$ can be classified as follows:
\begin{itemize}
\item If $\xi$ is a closed point of $X$, then the associated residue field $\kappa(\xi)$ is a finite extension of $K$, giving rise to a unique corresponding point in $X^{\mathrm{an}}$.
Every point of $X^{\mathrm{an}}$ arising in this way is called a rigid
point of \emph{type 1}.\footnote{Note that our definition of type 1 is slightly more restrictive than Berkovich's one. This latter includes all the points $x$ whose completed residue field $\scrH(x)$ is contained in the completion of the algebraic closure of $K$.}
\end{itemize}
If $x=(\xi, |\cdot|_\xi)$ is a point of $X^{\mathrm{an}}$ such that $\xi$ is the generic point of $X$, then we can consider the function 
\[ \begin{array}{rcl}
v_x:K(X) & \longrightarrow &\R \cup \{\infty\} \\
f &\longmapsto &- \log|f|_\xi .
\end{array}\]
It is a real valued valuation on the function field of $X$, extending the discrete valuation of $K$.
Conversely, every such valuation gives rise to a point of $X^{\mathrm{an}}$ which is not of type 1.
In this context, we call $\scrH(x)$ the completion of $K(X)$ with respect to $v_x$, we denote by $\widetilde{\scrH(x)}$ its residue field, and we distinguish three different cases:
\begin{itemize}
\item If the transcendence degree of $\widetilde{\scrH(x)}$ over $k$ is equal to 1, then we say that $x$ is of \emph{type 2}.
\item If the dimension of the $\Q$-vector space $\frac{|\scrH(x)^\times|}{|K^\times|} \otimes_\Z \Q$ is equal to 1, then we say that $x$ is of \emph{type 3}.
\item In the remaining cases, we say that $x$ is of \emph{type 4}.
\end{itemize}



\begin{remark}\label{Rgeometric}
If $x \in X^{\an}$ is a type 2 point, then by Abhyankhar's inequality \cite[\S 6, 10.3]{BourbakiAC57} we have that $\frac{|\scrH(x)^\times|}{|K^\times|}$ is a finite group.
Since $|K^\times| \cong \Z$ and $|\scrH(x)^\times|\subset \R_{\geq 0}$ is torsion free, then $|\scrH(x)^\times|$ is also abstractly isomorphic to $\Z$, that is, $x$ is an absolute value induced by a discrete valuation on $\scrH(x)$. Thus we can think of type 2 points
as discrete valuations on $K(X)$ restricting to $v_K$ on $K$
whose residue fields have transcendence degree 1 over $k$.  Such
valuations are called \emph{geometric valuations}.  In
\S\ref{Smaclane}, we recall a useful notation for such valuations
developed by Mac Lane.
\end{remark}

In this paper, we only deal with points of type 1 and type 2.
Points of type 2 are especially important for the following reason: if
$\mc{X}$ is a model of $X$ with special fiber $\mc{X}_k$, reduction modulo $\pi_K$ induces a natural surjective and anti-continuous map
\[
\red_\mc{X}:X^{\mathrm{an}} \longrightarrow \mc{X}_k
\]
(that is, the inverse image of an open subset of $\mc{X}_k$ is closed in $X^{\mathrm{an}}$), called the \emph{specialization map} (or \emph{reduction map}) associated with $\mc{X}$.
If $\mc{X}$ is normal and $\eta$ is the generic point of an irreducible component of $\mc{X}_k$, then $\red_\mc{X}^{-1}(\eta)$ consists of a single type 2 point of $X^{\mathrm{an}}$.
This construction was first investigated in
\cite{BoschLuetkebohmert85} in the setting of rigid analytic geometry
and is at the heart of the description of the structure of
non-archimedean $K$-analytic curves. 
The behavior of $\red^{-1}_{\mc{X}}$ on generic points of $\mc{X}_k$ can be made precise as follows: if $\ol{V}$ is an irreducible component of $\mc{X}_k$ with generic point $\eta_{\ol{V}}$,
then the associated type 2 point $\red^{-1}_{\mc{X}}(\eta_{\ol{V}})$ is
the valuation
\begin{equation}\label{Evaldef}
f \mapsto \frac{1}{m_{\ol{V}}}\ord_{\ol{V}}(f),
\end{equation}
where $m_{\ol{V}}$ is the multiplicity of $\ol{V}$.  

One can show that the map associating with $\mc{X}$ the set of type 2
points corresponding to the irreducible components of $\mc{X}_k$ is
actually a bijective correspondence.
More precisely, we have the following result.

\begin{prop}\label{prop:models}
The correspondence 
	\[
	\mc{X} \longmapsto V_\mc{X} = \big\{\red_\mc{X}^{-1}(\eta)\,\big|\,\eta\mbox{ generic point of a component of }\mc{X}_k\big\},
	\]
induces an isomorphism between the partially ordered set of isomorphism
classes of normal (algebraic) $\mc{O}_K$-models of $X$, ordered by
domination, and the partially ordered set of non-empty finite subsets
of $X^{\an}$ containing only type 2 points (or equivalently via Remark~\ref{Rgeometric}, non-empty finite sets of geometric valuations on $K(X)$), ordered by inclusion.
	
\end{prop}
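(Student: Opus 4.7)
The plan is to establish the correspondence in three steps: well-definedness (and type 2 character of the output), surjectivity (every finite set of type 2 points arises from a normal model), injectivity (a normal model is determined by its set of components), and finally the order-theoretic compatibility.

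First, I would verify that the map $\mc{X} \mapsto V_\mc{X}$ lands in the correct target. The special fiber $\mc{X}_k$ has only finitely many irreducible components, so $V_\mc{X}$ is finite. For each component $\ol{V}$, formula \eqref{Evaldef} exhibits $\red_\mc{X}^{-1}(\eta_{\ol V})$ as a discrete valuation on $K(X)$ extending $v_K$ whose residue field is the function field of the reduced component $\ol V^{\red}$, which has transcendence degree $1$ over the algebraically closed field $k$. Per Remark~\ref{Rgeometric}, this is a type 2 point.

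Next, for injectivity, I would use that a normal $\mc{O}_K$-curve $\mc{X}$ with generic fiber $X$ is completely determined by its function field $K(X)$ together with the set of discrete valuations on $K(X)$ corresponding to the codimension-$1$ points of its special fiber. Concretely, for any affine open $\Spec A \subseteq X$, the corresponding affine open of $\mc{X}$ has coordinate ring the intersection of $A$ with the valuation rings $\mc{O}_{v}$ for $v \in V_\mc{X}$ whose center meets $\Spec A$. Hence two normal models with the same $V_\mc{X}$ are isomorphic as models. For the order part, a domination morphism $\mc{X}' \to \mc{X}$ sends generic points of components of $\mc{X}'_k$ surjectively onto generic points of components of $\mc{X}_k$, and on valuation rings this gives an inclusion $V_\mc{X} \subseteq V_{\mc{X}'}$; conversely, an inclusion of sets of valuations produces the required birational morphism via the coordinate-ring description just given.

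The main work, and the step I expect to be hardest, is surjectivity: given a non-empty finite set $S$ of type 2 points, produce a normal model $\mc{X}$ with $V_\mc{X} = S$. The strategy is to start from some fixed normal model $\mc{X}_0$ of $X$ (which exists, since $X$ is smooth and projective, so any regular model works), and then to modify it by a sequence of normalized blowups of closed points to introduce each valuation in $S \setminus V_{\mc{X}_0}$ as a component of the special fiber. Equivalently, one may build $\mc{X}$ directly by covering $X$ with affine opens $\Spec A_i$ and taking $\Spec\bigl(A_i \cap \bigcap_{v \in S_i} \mc{O}_v\bigr)$, where $S_i \subseteq S$ consists of those valuations whose centers meet $\Spec A_i$; these glue to a normal proper $\mc{O}_K$-curve with generic fiber $X$ whose set of special-fiber generic points corresponds precisely to $S$. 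The subtle point is verifying that this intersection is a finitely generated $\mc{O}_K$-algebra and that all elements of $S$ actually appear as components (rather than getting contracted). This is standard in the theory of normal models; it can also be obtained by appealing to the corresponding statement in the rigid/formal category via \cite{BoschLuetkebohmert85} and algebraizing, using the properness of $X$.
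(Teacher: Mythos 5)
Your overall strategy (well-definedness, injectivity, order compatibility, existence) is the standard one, but note first that the paper does not actually prove this proposition: its ``proof'' consists of citing \cite[Corollary~3.18]{Ruth} and \cite[Theorem~4.1]{FantiniTurchetti21}. Measured as a self-contained argument, your proposal has a genuine gap exactly at the step you yourself flag as subtle, and the construction is circular as written. You define $S_i$ as the set of valuations ``whose centers meet $\Spec A_i$,'' but a geometric valuation $v$ has no center on the generic fiber: since $\mc{O}_v \cap K = \mc{O}_K$, we have $K \not\subseteq \mc{O}_v$, so no local ring of the $K$-scheme $X$ is contained in $\mc{O}_v$; the center of $v$ only exists on an $\mc{O}_K$-model, which is precisely the object you are trying to construct. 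The same imprecision affects the injectivity/order step: an arbitrary affine open $\Spec A \subseteq X$ need not be the generic fiber of any affine open of $\mc{X}$, so ``the corresponding affine open of $\mc{X}$'' is not defined; the usual uniqueness argument instead passes to a common dominating model (normalization of the closure of the diagonal) and uses that a proper birational morphism between normal models is determined by which components it contracts, via $f_*\mc{O} = \mc{O}$.

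Second, even after the indexing is repaired, the heart of the matter --- that $A_i \cap \bigcap_{v \in S_i}\mc{O}_v$ is a finitely generated flat $\mc{O}_K$-algebra, that the resulting affine schemes glue along open immersions into a proper normal model, and that every $v \in S$ actually appears as the valuation of an irreducible component of the special fiber rather than being absorbed --- is not background that can be dismissed as standard: it is exactly the content of the results the paper cites. A complete argument would either quote those references (as the paper does), or run along the established lines: every geometric valuation is divisorial, i.e.\ realized on a blowup of any fixed model (Zariski/Abhyankar), so one takes a normal model dominating the finitely many single-valuation models and then contracts the components not in $S$, invoking existence and uniqueness of contractions for models of curves over a complete discrete valuation ring; alternatively one argues formally/rigid-analytically via \cite{BoschLuetkebohmert85} and algebraizes using properness, which you mention but do not carry out. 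As written, the existence half of the correspondence is asserted rather than proved.
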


\begin{proof}
\cite[Corollary~3.18]{Ruth}, and \cite[Theorem~4.1]{FantiniTurchetti21} for the same statement in the context of Berkovich curves.
\end{proof}

\begin{defn}\label{Dabc}
\text{}
\begin{enumerate}
\item Let $x\in X^{\mathrm{an}}$ be a type 2 point. Then, the correspondence
of Proposition \ref{prop:models} assigns to the singleton $\{x\}$ a
model $\mc{X}_1$ with irreducible special fiber $\mc{X}_{1,k}$. We
define the \emph{multiplicity} of $x$, denoted by $m(x)$, to be the multiplicity of $\mc{X}_{1,k}$.
\item If $p$ is a closed point of the special fiber $\mc{X}_k$, its inverse image $\red^{-1}_{\mc{X}}(p)$ is called the \emph{formal fiber} of $p$, and it is an open subset of $X^{\mathrm{an}}$.
As such, it can not be an affinoid domain but enjoys similar properties, belonging to the wider class of \emph{semi-affinoid $k$-analytic spaces}\footnote{this terminology was introduced by Martin and Kappen in \cite{Martin17} to refer to a special class of analytic spaces studied by Berthelot in \cite{Berthelot96}, see also \cite[\S 7]{Jong95}}.
\end{enumerate}
\end{defn}

\begin{remark}
If $S$ is a set of type 2 points of $X^{\mathrm{an}}$ containing $x$, then the corresponding model $\mc{X}_2$ is obtained from $\mc{X}_1$ via a sequence of blow-ups followed by a sequence of blow-downs that do not affect the component $\ol{V}_x$ containing the strict transform of $\mc{X}_{1,k}$. 
In particular, the multiplicity of $\ol{V}_x$ in $\mc{X}_2$ is again
$m(x)$, so that the multiplicity of $x$ can be read off from any model arising from a set $S$ as above.
\end{remark}

\begin{remark}\label{Dc}
 In the specific case of the projective line we have the following:
a type 2 point $x\in \P^{1,\mathrm{an}}_K$ of
multiplicity 1 is a sup-norm on a $K$-rational closed disc.
In other words, there exist $a\in K$ and $\rho \in |K^\times|$ such that $x$ is a valuation of the form 
\[\eta_{a,\rho}(f):=\sup_{z\in \ol{B}(a,\rho)}\{|f(z)|\},\] where $\ol{B}(a,\rho)= \{z \in K: v_K(z - a) \geq \rho\}$.
In fact, there exists a closed point $p$ of $\ol{V}_x$ whose formal fiber is a Berkovich open disc centered in a $K$-point with radius in the value group of $K$. Then, $x$ is the boundary point of such a disc, and it follows from \cite[\S2.5]{Berkovich90} that it coincides with the sup-norm $\eta_{a,\rho}$.
\end{remark}

\begin{defn}\label{def:skeleton}
If $X$ has a semi-stable model $\mc{X}$, then the dual graph of the special fiber $\mc{X}_k$ is canonically represented by a subset $\Sigma_{\mc{X}}$ of the analytification $X^{\mathrm{an}}$ as follows: the set of vertices of $\Sigma_{\mc{X}}$ is the collection of points $\red^{-1}_{\mc{X}}(\xi)$ for $\xi$ varying over the generic points of all the irreducible components of $\mc{X}_k$, while the set of edges is given by those intervals contained in the formal fibers $\red^{-1}_{\mc{X}}(x)$ that join two vertices of $\Sigma_{\mc{X}}$. 
One can see that this happens only when $x$ is a closed point of $\mc{X}_k$ that belongs to two different irreducible components.
There is a continuous map \[\rho_X:X^{\mathrm{an}} \to \Sigma_{\mc{X}}\]
which makes $\Sigma_{\mc{X}}$ into a strong deformation retract of
$X^{\mathrm{an}}$. 
If the genus of $X$ is at least 1, then there is a minimal subset of $X^{\mathrm{an}}$ that supports a graph of the form $\Sigma_{\mc{X}}$ for some model $\mc{X}$.
We denote any such graph by $\Sigma_X$ and we call it the \emph{skeleton}
of $X^{\mathrm{an}}$. By extension we will refer to it as the
skeleton of $X$ as well. If the genus of $X$ is at least 2, then $\Sigma_X$ is the skeleton associated with the stable model of $X$.
\end{defn}

\subsection{Mumford curves}\label{Smumford}
A central tool for proving our main result is the Schottky uniformization of a Mumford curve $X$.
In \S\ref{Suniformization} we start by reviewing Mumford's uniformization theorem for these curves (Theorem \ref{thm:unif}), using the point of view of Berkovich analytification outlined in the previous section.
Among other things, Mumford's theorem establishes the existence of a uniformizing analytic morphism $p:\mf{D} \to X^{\mathrm{an}}$, which is a universal covering at the level of topological spaces.
The idea of applying Berkovich geometry to this framework is classical, being already proposed by Berkovich in \cite[Section~4.4]{Berkovich90}, and related to results established in the context of rigid-analytic geometry, for example by Gerritzen and van der Put \cite{GerritzenPut80} and L\"utkebohmert \cite{Luetkebohmert16}.\\
In \S\ref{Suniformization} we recall some facts following the text \cite{PoineauTurchetti21} that develops the theory of Mumford's uniformization from Berkovich's point of view.
In \S\ref{Sformal} we establish new results on the formal geometry of models of Mumford curves and their uniformizations that will be used later on in the paper.
In Proposition \ref{prop:mult1} we build a regular semi-stable model of $X$ that will be used as a basis for other constructions. 
Then, we explain how given a semi-stable model $\mc{X}'$ of $X$ we can lift the uniformization map $p:\mf{D} \to X^{\mathrm{an}}$ to a morphism of formal models $\varpi:\mc{D}' \to \hat{\mc{X}}'$ (see Lemma \ref{lem:formallift} and the preceding construction).
Finally, we establish properties of this lifting and comparison results between certain local rings of $\mc{D}'$ and of $\mc{X}$. 
This allows us to move freely from the language of Berkovich curves and that of (formal) models in the rest of the paper, getting in this way the best of both worlds.
In \S\ref{Slifting} we consider automorphisms of $X$ that are semi-linear with respect to a Galois sub-extension of the field where $X$ is defined.
In particular, Proposition \ref{prop:extensionofliftings} establishes conditions for these automorphisms to lift to the universal cover of $X$, which will be used at the beginning of \S\ref{Sdescent} and \S\ref{Sdivisibility}.

\subsubsection{Uniformization of Mumford curves}\label{Suniformization}
Let $g\in \N_{\ge 1}$. Recall that a \emph{Mumford curve} over $K$ is an algebraic $K$-curve
whose Jacobian has split totally degenerate reduction.  Equivalently,
it is a $K$-curve with a semi-stable model over $\mc{O}_K$ such that
the irreducible components of the special fiber all have genus zero.
Mumford showed that these are precisely the non-archimedean curves
that admit a uniformization as in
Theorem~\ref{thm:unif} below.  Before stating this theorem, we need
some definitions.

\begin{defn}[cf. Definition 6.4.1 in \cite{PoineauTurchetti21}]\label{def:Schottkyfigure}
Let $\gamma_{1},\dotsc,\gamma_{g} \in \PGL_{2}(K)$. 
Let $\calB = \big(D^+(\gamma_{i}), D^+(\gamma_{i}^{-1})\big)_{1\le i\le g}$ be a set consisting of $2g$ pairwise disjoint closed discs in~$\P_K^{1, \mathrm{an}}$.
For each $\gamma\in\{\gamma_1^{\pm1},\dotsc, \gamma_g^{\pm1}\}$ we define an open disc $D^-(\gamma)$ by setting
\begin{equation}\label{eq:Schottkyfig} D^-(\gamma) := \gamma (\P^1_{K} \setminus D^+(\gamma^{-1})).
\end{equation}
The set $\calB$ is called a \emph{Schottky figure} adapted to $(\gamma_{1},\dotsc,\gamma_{g})$ if, for each $\gamma \in\{\gamma_1^{\pm1},\dotsc, \gamma_g^{\pm1}\}$ we have that $D^-(\gamma)$ is a maximal open disc inside $D^+(\gamma)$.
A subgroup $\Gamma$ of $\PGL_{2}(K)$ is called a \emph{Schottky group} if there exist a generating set $\{\gamma_1, \dotsc, \gamma_g\}$ for $\Gamma$ and a Schottky figure adapted to the $g$-tuple $(\gamma_{1},\dotsc,\gamma_{g})$.
\end{defn}

\begin{figure}
\includegraphics[scale=.5]{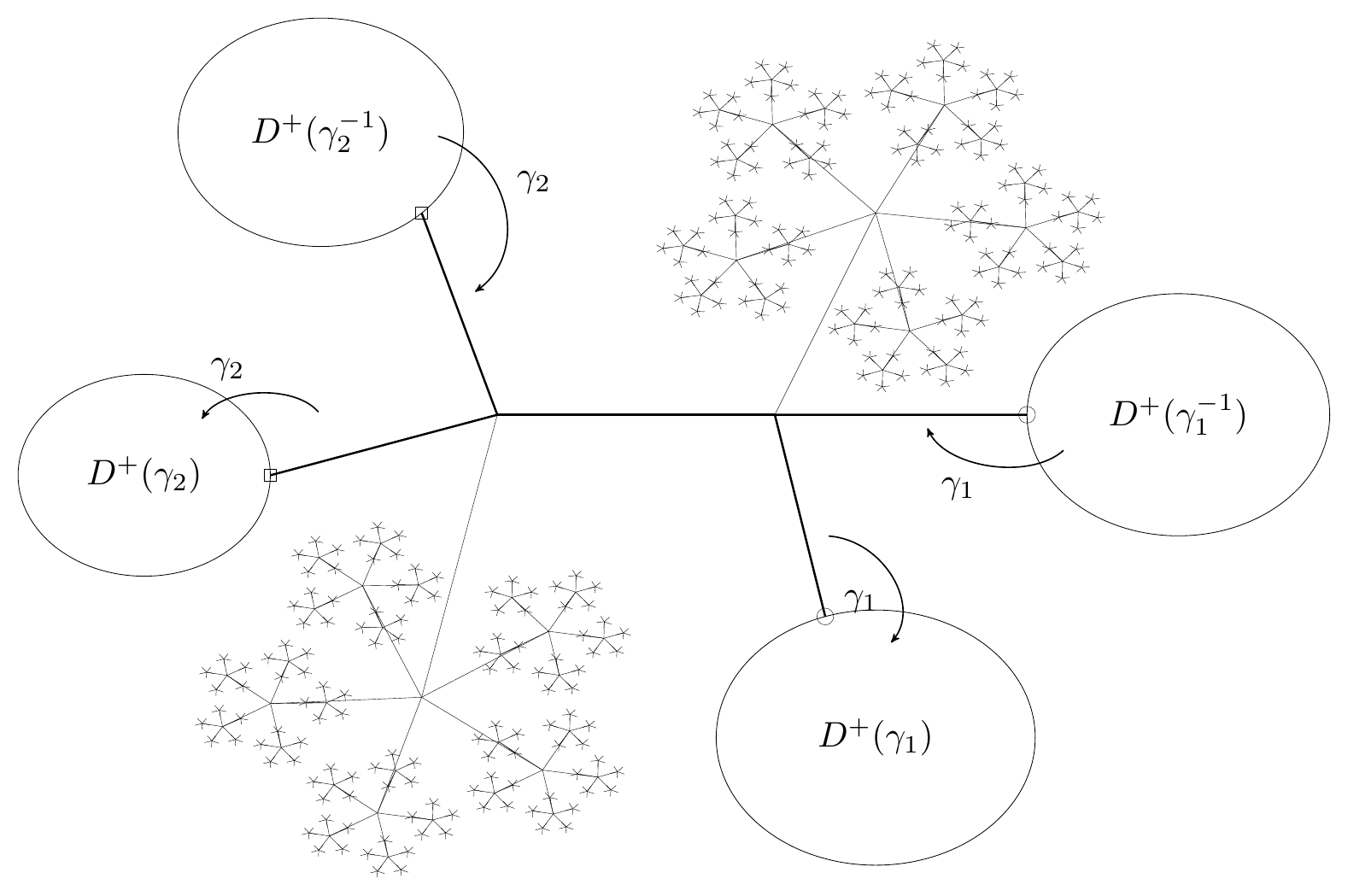}
\caption{A Schottky figure adapted to a pair $(\gamma_1, \gamma_2)$.}
\end{figure}

\begin{notation}\label{NSchottky}
Let $\calB$ be a Schottky figure adapted to a $g$-tuple $(\gamma_{1},\dotsc,\gamma_{g})$.
The \emph{fundamental domain} associated with $\calB$ is
\[F := \P^{1, \mathrm{an}}_{K} \setminus \bigcup_{i=1}^g \big( D^-(\gamma_i) \cup D^{+}(\gamma_i^{-1}) \big).\]
The \emph{discontinuity set} and the \emph{limit set} of the Schottky group $\Gamma$ are respectively
\[\mf{D} :=\bigcup_{\gamma\in\Gamma} \gamma F \;\;\; \mbox{and} \;\;\; \calL:= \P^{1, \mathrm{an}}_{K} \setminus \mf{D}.\]
The \emph{skeleton} $\Sigma_{\mf{D}}$ of $\mf{D}$ is the intersection
of $\mf{D}$ with the convex envelope of~$\calL$:
\[\Sigma_{\mf{D}} := \mf{D} \cap \bigcup_{x,y \in \calL} [x,y].\]
The \emph{skeleton} $\Sigma_{F}$ of $F$ is the subset $F\cap \Sigma_{\mf{D}}$ of $F$.
Note that both $\mf{D}$ and $F$ retract continuously on their skeletons as in Definition \ref{def:skeleton}.
\end{notation}

\begin{remark}
The discontinuity set and the limit set depend only on $\Gamma$, and
not on the choice of a Schottky figure.
Moreover, the limit set coincides with the set of limit points of
orbits under the action of $\Gamma$, that is $\ell \in \P^{1,
  \mathrm{an}}_{K}$ belongs to $\calL$ if and only if there exist
$x\in \P^{1, \mathrm{an}}_{K}$ and a sequence $(\gamma_i)_{i\in \N}$
of elements of $\Gamma$ such that  $\gamma_i(x) \neq \ell$ for all
$i \in \N$ and $\lim_{i\to\infty} \gamma_i(x) =
\ell$.
The points of the limit set are $K$-rational (see \cite[\S4.4]{Berkovich90}).
\end{remark}

The following theorem describes Mumford's uniformization in the
setting of Berkovich geometry.

\begin{theorem}[cf. Theorem 6.4.18 in \cite{PoineauTurchetti21}]\label{thm:unif}
Let $\Gamma$ be a Schottky group with $g$ generators.
The action of~$\Gamma$ on~$\mf{D}$ is free and proper and the quotient $\Gamma\backslash \mf{D}$ is the analytification of a Mumford curve $X^{\mathrm{an}}$ of genus~$g$. 
Conversely, given a Mumford curve $X$ of genus $g$ there exists a Schottky group $\Gamma$ of rank $g$ such that $X^{\mathrm{an}}= \Gamma\backslash \mf{D}$, where $\mf{D}$ is the discontinuity set of $\Gamma$.

The quotient map $p \colon \mf{D} \to X^{\mathrm{an}}$ is a universal covering of $X^{\mathrm{an}}$ respecting the skeleta: if $\Sigma_{\mf{D}}$, $\Sigma_{F}$ and $\Sigma_{X}$ denote the skeleta of~$\mf{D}$, $F$ and~$X$ respectively, we have
\[ p^{-1}(\Sigma_{X}) = \Sigma_{\mf{D}} \textrm{ and } p(\Sigma_{\mf{D}}) = p(\Sigma_{F}) = \Sigma_{X}. \]
\end{theorem}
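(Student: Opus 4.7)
The plan is to split the theorem into three assertions and treat them in order: (a) the action of $\Gamma$ on $\mf{D}$ is free and proper with quotient an analytic Mumford curve of genus $g$; (b) the converse, that every Mumford curve arises this way; (c) the compatibility of the uniformization map with the skeleta. Assertions (a) and (b) are essentially due to Mumford in the formal/rigid setting, and were translated into the Berkovich setting by Berkovich \cite[\S4.4]{Berkovich90} and L\"utkebohmert \cite{Luetkebohmert16}. I would therefore invoke those works for the heavy lifting, while giving enough of the construction to justify (c).

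For (a), I would first use the fact that the closed discs $D^+(\gamma_i^{\pm 1})$ are pairwise disjoint and that $\gamma_i$ sends $\P^{1,\an}_K \setminus D^+(\gamma_i^{-1})$ isomorphically onto $D^-(\gamma_i)\subset D^+(\gamma_i)$. A standard ping-pong argument then shows that $\Gamma$ is free of rank $g$ and acts freely on $\mf{D}$ with $F$ as a fundamental domain. Properness reduces to showing that, for any compact $C\subset\mf{D}$, only finitely many $\Gamma$-translates meet $C$; this follows because any $\gamma\in\Gamma$ not in a fixed finite set sends $F$ into one of the nested open discs $D^-(\gamma_i)$, shrinking towards a limit point. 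The quotient $\Gamma\backslash\mf{D}$ can then be glued from $F$ by identifying the boundary circles of the pairs $(D^+(\gamma_i),D^+(\gamma_i^{-1}))$ via $\gamma_i$, and one checks that the result is a proper smooth $K$-analytic curve. By GAGA for analytic curves it is the analytification of a smooth projective $K$-curve $X$. The genus is computed from the skeleton: $\Sigma_{\mf{D}}$ is a tree and $\Gamma$ acts freely on it with quotient a finite graph whose first Betti number equals the rank of $\Gamma$, namely $g$, and this first Betti number equals $g(X)$ for a semi-stable reduction with rational components.

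For (b), I would invoke the classical theorem of Mumford (in the form proved in \cite{GerritzenPut80} or \cite{Luetkebohmert16}): a Mumford curve $X$ has a canonical universal analytic covering by an open subset $\mf{D}\subset\P^{1,\an}_K$ whose deck transformation group is a Schottky subgroup $\Gamma$ of $\PGL_2(K)$ of rank $g(X)$, and $\mf{D}$ is the discontinuity set of $\Gamma$. Picking generators $\gamma_1,\dotsc,\gamma_g$ of $\Gamma$ and sufficiently small discs around their attracting and repelling fixed points produces a Schottky figure adapted to $(\gamma_1,\dotsc,\gamma_g)$; this is the content of Definition \ref{def:Schottkyfigure}.

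The last claim (c) is where I would do real work. First, $\Sigma_{\mf{D}}$ is $\Gamma$-invariant: every element of $\Gamma$ permutes $\calL$ (since the limit set is intrinsic to $\Gamma$) and acts on $\P^{1,\an}_K$ preserving geodesics, so it preserves the convex envelope of $\calL$ and hence $\Sigma_{\mf{D}}$. Therefore $p(\Sigma_{\mf{D}})$ is a closed connected subgraph of $X^{\an}$; since the retraction $\mf{D}\to\Sigma_{\mf{D}}$ is $\Gamma$-equivariant, it descends to a retraction $X^{\an}\to p(\Sigma_{\mf{D}})$, so $p(\Sigma_{\mf{D}})$ supports a semi-stable skeleton of $X$. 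Minimality, and hence the equality $p(\Sigma_{\mf{D}})=\Sigma_X$, follows because every vertex of $\Sigma_{\mf{D}}$ either has valence $\ge 3$ in $\Sigma_{\mf{D}}$ or lies on a loop in the quotient, so no edge can be contracted without creating a component of genus $\ge 1$ or destroying the retraction. Using that $F$ meets every $\Gamma$-orbit on $\mf{D}$, one gets $p(\Sigma_F)=p(\Sigma_{\mf{D}})=\Sigma_X$. Finally, for $p^{-1}(\Sigma_X)=\Sigma_{\mf{D}}$, the inclusion $\supseteq$ is immediate from $p(\Sigma_{\mf{D}})=\Sigma_X$, while the reverse inclusion follows because $p$ is a local isomorphism of analytic spaces so it commutes with the canonical retractions onto skeleta, and any point outside $\Sigma_{\mf{D}}$ retracts to a unique nearest point of $\Sigma_{\mf{D}}$ inside an open disc that embeds into $X^{\an}$ via $p$. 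The main obstacle here is checking minimality of $p(\Sigma_{\mf{D}})$ cleanly; the trick is to transport the combinatorial description of $\Sigma_X$ (the dual graph of the stable reduction) back through $p$ using the identification of vertices of $\Sigma_{\mf{D}}/\Gamma$ with $\Gamma$-orbits of branch points of the tree.
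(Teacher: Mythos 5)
The paper does not prove Theorem~\ref{thm:unif} at all: it treats it as Mumford's classical uniformization theorem transported to the Berkovich setting, deferring to \cite{Mumford72}, \cite[\S4.4]{Berkovich90}, \cite{GerritzenPut80} and \cite{Luetkebohmert16} (see Remark~\ref{rmk:Mumformal}), which is exactly the route you take for the main assertions, and your supplementary sketch of the skeleton compatibility is a reasonable rendering of what those references already contain. So your proposal is correct and follows essentially the same approach as the paper, just with more of the classical argument spelled out.
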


\begin{figure}[ht]
\centering
    \begin{subfigure}[b]{0.4\textwidth}
\includegraphics[scale=.3]{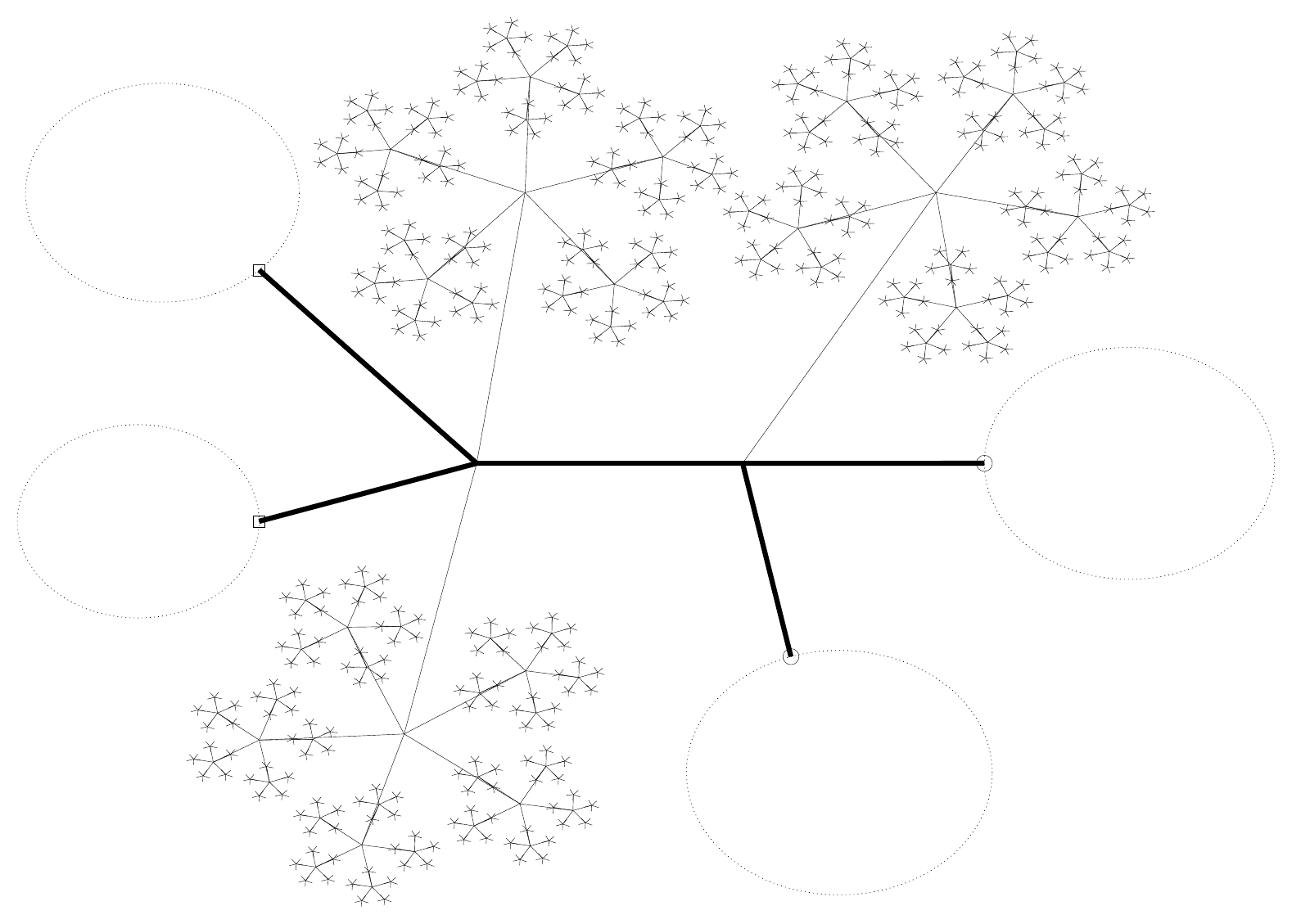}
    \end{subfigure}
    \hspace{1.5cm}
    \begin{subfigure}[b]{0.4\textwidth}
\includegraphics[scale=.3]{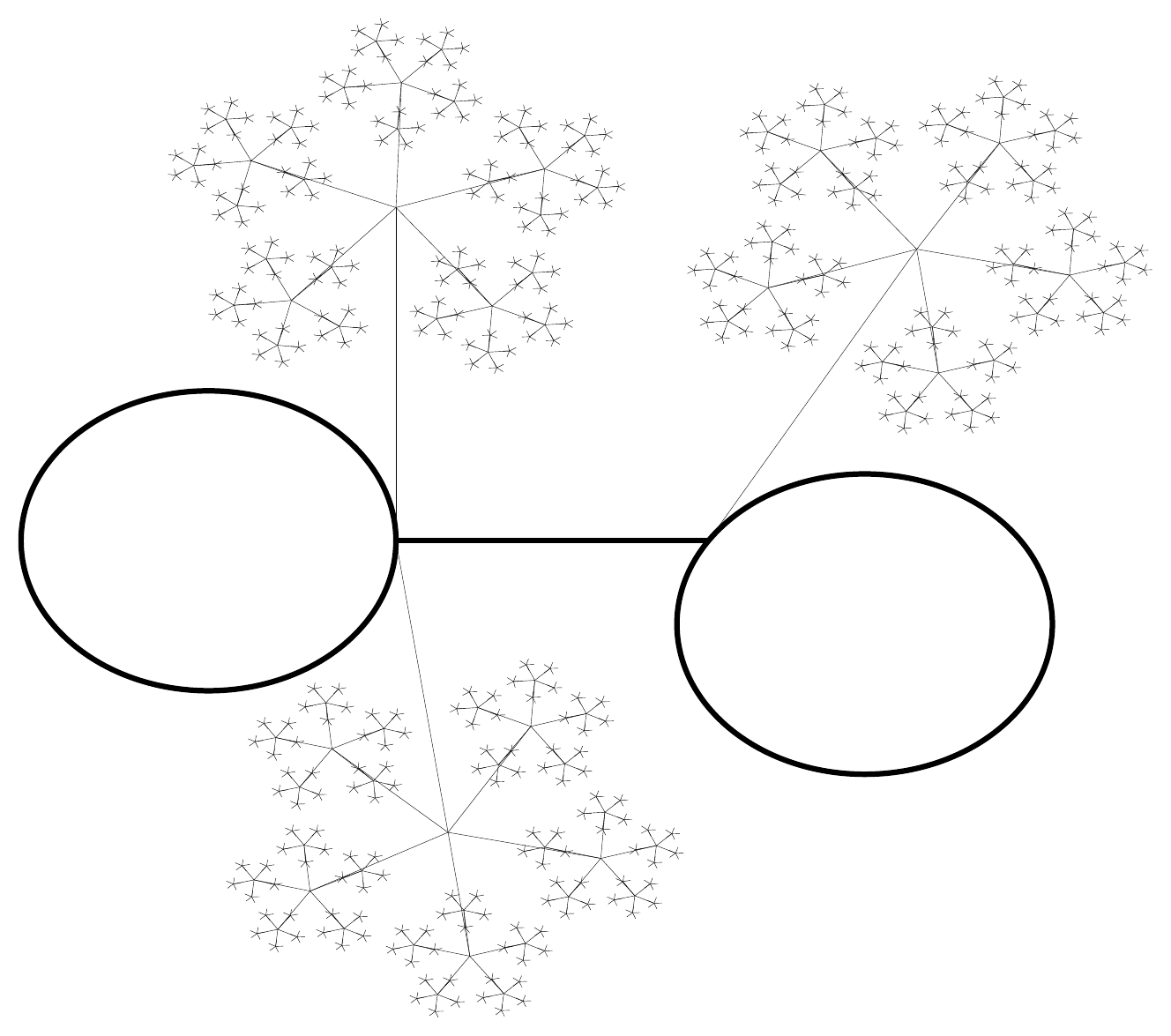}
    \end{subfigure}

 \caption{The fundamental domain $F$ (on the left) of the Schottky group $\Gamma$ is the complement of $2g$ discs in $\P^{1, \mathrm{an}}_K$. The group $\Gamma$ identifies the ends of the skeleton $\Sigma_{F}$, so that the corresponding curve (on the right) retracts on its skeleton $\Sigma_X$.}
 \label{fig:Test}
\end{figure}

\begin{remark}\label{rmk:Mumformal}
Originally, Theorem \ref{thm:unif} was proved by Mumford \cite{Mumford72} using tools of formal algebraic geometry.
The central objects in Mumford's paper are a certain formal scheme $\mc{D}$ whose generic fiber is isomorphic to $\mf{D}$, and a uniformization map $\mc{D}\to \mc{X}$, where $\mc{X}$ is the stable model of $X$.
The relation between these formal schemes and Berkovich analytifications will be established in the second part of the present section.
\end{remark}

\begin{example}[Tate curves]
If $g=1$ in the theory above, one starts with the datum of an element $\gamma \in \PGL_2(K)$ and of two disjoint closed discs $D^+(\gamma)$  and $D^+(\gamma^{-1})$ in such a way that
$\gamma(\P_K^{1, \mathrm{an}} \setminus D^+(\gamma^{-1}))$ is a maximal open disc inside $D^+(\gamma)$.
Since $\gamma$ is loxodromic, up to conjugation it is represented by a matrix of the form $\begin{bmatrix} q & 0 \\ 0 & 1 \end{bmatrix}$ for some $q \in L$ satisfying $0<|q|<1$.
In other words, up to a change of coordinate in $\P^1_K$, the transformation $\gamma$ is the multiplication by $q$ and hence the limit set $\mc{L}$ consists only of the two points $0$ and $\infty$.
The quotient curve obtained from applying Theorem \ref{thm:unif} is an elliptic curve, whose set of $K$-points is isomorphic to the multiplicative group $K^\times/ q^\Z$.
\end{example}

If $g\geq 2$ the skeleton $\Sigma_{X}$ of $X^{\mathrm{an}}$ coincides with the dual graph of the special fiber $\mc{X}_k$ of the stable model of $X$ by \cite[4.3]{Berkovich90}.

\subsubsection{Formal models and uniformization}\label{Sformal}

The uniformization map $p \colon \mf{D} \to X^{\mathrm{an}}$ of Theorem \ref{thm:unif} is a main player of this paper. 
In the following, whenever a Mumford curve is given we always assume that it comes with the choice of a map $p$ as well.
We firstly need to study the relationship between the analytic theory of uniformization and the formal-algebraic theory of models. 
Let us start with a result that is proper to our setting, where $K$ is a discretely valued field:

\begin{prop}\label{prop:mult1}
Let $X$ be a Mumford curve of genus at least 2, or of genus $1$ with
$X(K) \neq \emptyset$ .
Then the set $\calP$ of points of multiplicity 1 contained in the skeleton $\Sigma_{X}$ of $X^{\mathrm{an}}$ is finite.
Moreover, the model associated with $\calP$ under the correspondence given by Proposition \ref{prop:models} is semi-stable.
\end{prop}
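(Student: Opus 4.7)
The strategy is to leverage the Mumford uniformization $p \colon \mf{D} \to X^{\an}$ from Theorem~\ref{thm:unif} to transfer the analysis to $\P^{1,\an}_K$, where multiplicity 1 type 2 points are explicitly described. Since $p$ is a covering map of Berkovich analytic spaces, it is a local isomorphism and in particular preserves the value groups $|\scrH(x)^\times|$ at each type 2 point. Consequently $p$ preserves the multiplicities of type 2 points: a point $y \in \Sigma_X$ has $m(y) = 1$ if and only if each of its preimages in $\Sigma_{\mf{D}} = p^{-1}(\Sigma_X)$ is a multiplicity 1 point of $\P^{1,\an}_K$.

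\textbf{Finiteness of $\calP$.} Since $\Sigma_X = p(\Sigma_F)$ by Theorem~\ref{thm:unif}, it suffices to show that $\Sigma_F = F \cap \Sigma_{\mf{D}}$ contains only finitely many multiplicity 1 points of $\P^{1,\an}_K$. Topologically $\Sigma_F$ is a finite graph: it is a bounded subtree of $\Sigma_{\mf{D}}$ with at most $2g$ boundary leaves, one on each of the $2g$ discs defining the Schottky figure. Hence each edge of $\Sigma_F$ is a bounded geodesic segment in $\P^{1,\an}_K$. By part (c) of Definition~\ref{Dabc}, multiplicity 1 type 2 points of $\P^{1,\an}_K$ are precisely the sup-norms $\eta_{a,\rho}$ with $a \in K$ and $\rho \in v_K(K^\times) = \Z$; along any bounded geodesic in $\P^{1,\an}_K$ only finitely many such integer-radius points occur. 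Summing over the finitely many edges of $\Sigma_F$ proves finiteness.

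\textbf{Semi-stability.} Let $\mc{X}'$ be the normal model of $X$ associated to $\calP$ by Proposition~\ref{prop:models}. By construction, every irreducible component of $\mc{X}'_k$ has multiplicity 1, so $\mc{X}'_k$ is reduced, and it remains only to check that each singular point is an ordinary double point. Singular points correspond to pairs $y_1, y_2 \in \calP$ joined by an edge of $\Sigma_X$ whose interior is disjoint from $\calP$. Lifting via $p$ to $\Sigma_F$, we obtain two consecutive multiplicity 1 points $x_1, x_2$ on a common geodesic in $\P^{1,\an}_K$, i.e.\ two $K$-rational sup-norms whose radii differ by the smallest possible integer unit in $v_K(K^\times)$. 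A direct computation (most transparently via Mac Lane valuations, as developed in \S\ref{Smaclane}) shows that the model of $\P^1_K$ associated to $\{x_1, x_2\}$ has exactly one ordinary double point, at the intersection of its two $\P^1$-components. The main obstacle is to transfer this local structure through $p$ to the algebraic model $\mc{X}'$: here one uses that $p$ is a local analytic isomorphism near the relevant formal fibers, and that $\Gamma$ acts freely and properly on $\mf{D}$ by Theorem~\ref{thm:unif}, so that no two branches at a would-be singularity can be identified by a nontrivial element of $\Gamma$. This identifies the singularity of $\mc{X}'_k$ with the ordinary double point appearing upstairs on the model of $\P^1_K$, completing the proof.
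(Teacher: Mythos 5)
Your route through the Schottky uniformization is genuinely different from the paper's, which never leaves the algebraic side: the paper starts from a minimal semi-stable model $\mc{X}$ (which exists over $\mc{O}_K$ because $X$ is a Mumford curve over $K$), takes its minimal desingularization, and identifies that desingularization — which is again semi-stable because resolving $xy=\pi_K^n$ only inserts a chain of reduced rational curves — with the model attached to $\calP$. Your finiteness argument is fine: $\Sigma_F$ is a finite tree with bounded edges, the multiplicity~$1$ points of $\proj^{1,\an}_K$ are the $\eta_{a,\rho}$ with $a\in K$ and $\rho\in v_K(K^\times)$, and $p$ preserves multiplicities because it is a local isomorphism.

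The semi-stability half, however, has a genuine gap at the sentence ``Singular points correspond to pairs $y_1,y_2\in\calP$ joined by an edge of $\Sigma_X$ whose interior is disjoint from $\calP$.'' That description of the singular locus is exactly what needs to be proved, and it is false for an arbitrary finite set of type~$2$ points on $\Sigma_X$: a closed point of the model attached to $\calP$ has for its formal fiber a connected component of $X^{\an}\setminus\calP$, and such a component is an open annulus only if the piece of $\Sigma_X$ it retracts onto is a segment. If some vertex $v$ of $\Sigma_X$ of valence $\ge 3$ had multiplicity $>1$, then $v\notin\calP$, the component of $X^{\an}\setminus\calP$ containing $v$ would retract onto a star rather than a segment, and the corresponding closed point would be a point where three or more branches of the special fiber meet — not an ordinary double point. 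So you must show that every branch vertex of $\Sigma_X$ lies in $\calP$ (and, in genus~$1$, that $\calP\neq\emptyset$ at all). This is true and fixable within your framework: a branch point of $\Sigma_{\mf{D}}$ is the median of three points of the limit set $\mc{L}$, which are $K$-rational, so it is of the form $\eta_{a,\rho}$ with $a\in K$ and $\rho\in\Z$; alternatively one can invoke, as the paper does, the existence of a semi-stable (hence reduced) model over $\mc{O}_K$ whose components account for all branch vertices. Without one of these inputs the argument does not close. The remaining transfer of the local ring through $p$ is also stated rather than proved, but the needed comparison of completed local rings via bounded functions on formal fibers is available (cf.\ Lemma~\ref{Lsamelocalring}), so I would count that only as an omission of detail rather than a gap.
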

\begin{proof}
The curve $X$ has semi-stable reduction by virtue of being a Mumford curve, and we fix a minimal semi-stable
model $\mc{X}$ of $X$ with special fiber $\mc{X}_k$. 
Then, Definition \ref{def:skeleton} ensures that the skeleton $\Sigma_{X}$ is non-empty and that there is a point of multiplicity 1 on $\Sigma_{X}$ for every irreducible component of $\mc{X}_k$.
Let $\widetilde{\mc{X}}$ be the minimal desingularization of
$\mc{X}$.
The model $\widetilde{\mc{X}}$ is the minimal regular snc-model of
$X$, and it is obtained by blowing up $\mc{X}$ at the closed double
points that are singular in $\mc{X}_k$.
Let $p$ be such a double point. Its formal fiber $\red^{-1}_\mc{X}(p)$ is an open annulus of thickness $n$ contained in $X^{\mathrm{an}}$.
Then the model $\mc{X}$ needs to be blown-up exactly $\lfloor n/2\rfloor$ times at the point $p$ in order to resolve the singularity (see
\cite[Example 8.3.53]{LiuBook}).
This procedure corresponds to changing a model by adding all the multiplicity 1 points in $\Sigma_X \cap \red^{-1}_\mc{X}(p)$ to the corresponding set of type 2 points.
As a result, $\widetilde{\mc{X}}$ is semi-stable, and we have a bijection between the irreducible components of $\widetilde{\mc{X}}_k$ and the points of multiplicity 1 in $\Sigma_{X}$.
In other words, the semi-stable model $\widetilde{\mc{X}}$ is the one associated with the set $\calP$.
\end{proof}

We now proceed to describe Mumford's construction of the formal model $\mc{D}$ of $\mf{D}$ (see Remark \ref{rmk:Mumformal}) using Bosch and L\"utkebohmert's theory of \emph{canonical reductions} (see \cite[\S 1]{BoschLuetkebohmert85} for full details on this theory).
First, we recall that an affinoid space $U$ has an associated
canonical affine formal model, which is the formal spectrum $\mc{U}=\Spf(\calO_U(U)^\circ)$ where $\calO_U(U)^\circ \subset \calO_U(U)$ is the subring of functions bounded by 1 on $U$.
This association gives rise to a canonical reduction map $\red_\mc{U}:U\to \mc{U}_k$.
Given an analytic space $\mf{Z}$, a \emph{formal affinoid covering} of $\mf{Z}$ is an admissible affinoid covering $\mf{U}=\{U_i\}_{i\in I}$ such that $U_i \cap U_j$ is a finite union of subdomains of $\mf{Z}$ of the form $\red_{\mc{U}}^{-1}(\widetilde{V})$ for a Zariski open $\widetilde{V}$ in the canonical reduction $\widetilde{U_i}$ of $U_i$.
With every formal affinoid covering of $\mf{Z}$ one can associate a canonical formal model $\mc{Z}$ of $\mf{Z}$.
The special fiber $\mc{Z}_k$ of $\mc{Z}$ is obtained by pasting the canonical reductions $\widetilde{U_i}$ along the open subsets whose inverse image under the reduction map is $U_i \cap U_j$.
This procedure gives rise to a canonical reduction map $\red_\mc{Z}: \mf{Z} \to \mc{Z}_k$.
\begin{example}
Let $X$ be a $K$-curve, $X^{\mathrm{an}}$ its analytification, and $\mc{X}$ a semi-stable model of $X$.
Then, any formal affine covering of $\mc{X}$ induces on $X^{\mathrm{an}}$ a formal affinoid covering.
The resulting canonical reduction map coincides with the map $\red_\mc{X}$ we refer to in \S\ref{Smodels}.
\end{example}

Let $(\mf{Z}, \mf{U})$ be a pair consisting of an analytic space and a formal affinoid covering on it.
The \emph{inverse image topology} on $\mf{Z}$ induced by the model $\mc{Z}$ arising from $\mf{U}$ is the one whose open sets are inverse images under the map $\red_\mc{Z}$ of Zariski-open subsets of $\mc{Z}_k$.
Let us consider the analytic space $\mf{D}$ and its formal affinoid
covering $\{U_e\}_{e\in E}$ where $E$ is the set of edges of the tree
$\Sigma_{\mf{D}}$ and $U_e$ is the affinoid subdomain of $\mf{D}$
consisting of all the points in $\mf{D}$ that retract to $e$ under the map $\rho_X$ introduced in Definition \ref{def:skeleton}.
The formal model of $\mf{D}$ associated with this covering coincides with the formal scheme $\mc{D}$ defined by Mumford.

\begin{remark}\label{Rinfinitemodels}
Let $\mf{X}$ be a smooth $K$-analytic curve.
From an infinite subset of type 2 points in $\mf{X}$ it is not always possible to construct a formal model of $\mf{X}$ and therefore there is not a straightforward generalization of Proposition \ref{prop:models} in this context.\footnote{Let us mention that such a generalization does nevertheless exist, and was proved by Ducros \cite[Theorem 6.3.15]{Ducros14}.} 
However, given a normal formal model $\mc{X}$ of $X^{\mathrm{an}}$, a
generic point $\eta \in \mc{X}_k$, and the reduction map $\red_\mc{X}:
\mf{X} \to \mc{X}_k$ constructed above, one has that
$\red_{\mc{X}_k}^{-1}(\eta)$ consists of a unique type 2 point
(which by Remark~\ref{Rgeometric} is the same thing as a geometric valuation on $K(X)$).
Hence, to any formal model one can associate a possibly infinite set
of type 2 points in $\mf{X}$ (equivalently, a possibly infinite set of geometric
valuations on $K(X)$).
\end{remark}

Let us now generalize the construction of the model $\mc{D}$.
Let $\mc{X}'$ be a semi-stable model of $X$, and consider the formal
affinoid covering $\mf{V}=\{V_i\}_{i\in I}$ on $X^{\mathrm{an}}$
containing all the affinoid domains of the form $V_i:=\red_{\mc{X}'}^{-1}(\widetilde{V_i})$ for $\widetilde{V_i}$ running over all connected Zariski-open subsets of $\mc{X}'_k$.
Then, we can build a formal affinoid covering of $\mf{D}$ as follows.
If $V_i \in \mf{V}$ is a contractible affinoid domain, then $p^{-1}(V_i)$ is a disjoint union $\coprod_{j\in J_i} U_{ij}$ of affinoid domains of $\mf{D}$ such that $U_{ij} \cong V_j$ for every $j\in J_i$.
If $V_i$ is not contractible, then it follows from the semi-stability of $\mc{X}'$ that it can be decomposed as a union of contractible affinoids and affinoids that retract on a loop.
We can then suppose without loss of generality that the affinoid $V_i$ admits a continuous retraction on a loop or, in other words, that its canonical reduction is an irreducible affine curve with a unique nodal singularity.
In this case, the space $p^{-1}(V_i)$ is connected, but not affinoid.
However, $p^{-1}(V_i)$ is a union $\bigcup_{j\in J_i} U_{ij}$ of affinoids satisfying the following properties:

\begin{itemize}
\item The projection $p(U_{ij})$ is equal to $V_i$;
\item The boundary of $U_{ij}$ in $\mf{D}$ consists of two distinct points.
\end{itemize}

The collection $\{U_{ij}\}_{i\in I, j \in J_i}$ is a formal affinoid covering of $\mf{D}$, and then gives rise via Bosch--L\"{u}tkebohmert's theory to a formal model of $\mf{D}$, that we denote by $\mc{D}'$.
Thanks to Remark \ref{Rinfinitemodels}, one can associate a set of type 2 points $\mc{Q}_\mc{D'} \subset \mf{D}$ with the formal model $\mc{D}'$.
If we denote by $\mc{Q}_\mc{X'}$ the set of type 2 points of $X^{\mathrm{an}}$ associated with $\mc{X}'$, then we have that $p^{-1}(\mc{Q}_\mc{X'}) = \mc{Q}_\mc{D'}$.
This follows from the definition once we remark that $\mc{Q}_\mc{X'}$ (respectively $\mc{Q}_\mc{D'}$) consists precisely of the boundary points of the affinoid domains in the covering defining the formal model $\mc{X'}$ (respectively $\mc{D'}$).

\begin{lemma}\label{lem:formallift}
Let $\mc{X}'$ be a semi-stable model of a Mumford curve $X$, and let $\mc{D}'$ be the model of $\mf{D}$ constructed from $\mc{X}'$ as above.
Then, the uniformization map $p:\mf{D} \to X^{\mathrm{an}}$ of Theorem
\ref{thm:unif} is continuous for the inverse image topologies induced
by $\mc{D}'$ and $\hat{\mc{X}}'$ respectively, and extends to a local isomorphism of formal schemes $\varpi \colon \mc{D}' \to \hat{\mc{X}'}$.
\end{lemma}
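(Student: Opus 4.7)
The plan is to build the morphism $\varpi$ piece by piece from the formal affinoid coverings $\{U_{ij}\}$ of $\mf{D}$ and $\{V_i\}$ of $X^{\mathrm{an}}$ used to construct $\mc{D}'$ and $\hat{\mc{X}}'$, and then to deduce the continuity statement as a formal consequence of the existence and local structure of $\varpi$.

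First, for each $i$ and $j$, I would observe that the restriction $p|_{U_{ij}}\colon U_{ij}\to V_i$ is a local isomorphism of $K$-affinoid spaces. In the contractible case, this is by construction: $p|_{U_{ij}}$ is in fact an isomorphism $U_{ij} \cong V_i$. In the loop case, $p$ is the quotient map by a free and proper action of the discrete group $\Gamma$ on $\mf{D}$ by Theorem~\ref{thm:unif}, and over any sufficiently small affinoid in $V_i$ one of the sheets of the cover gives an analytic isomorphism with a lift inside $U_{ij}$. In either case $p|_{U_{ij}}$ sends functions bounded by $1$ on $V_i$ to functions bounded by $1$ on $U_{ij}$, so it induces a morphism of canonical affine formal models $\widetilde{U_{ij}} \to \widetilde{V_i}$.

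Next, I would check that these local morphisms glue. The special fibers $\mc{D}'_k$ and $\hat{\mc{X}}'_k$ are, by the Bosch--L\"utkebohmert construction, obtained by pasting the canonical reductions $\widetilde{U_{ij}}$ and $\widetilde{V_i}$ along the Zariski opens whose preimages under the reduction maps are the pairwise intersections of affinoids in each covering. Since any two of the local morphisms are induced by the single analytic map $p$, they automatically agree on overlaps and assemble into a morphism $\varpi\colon \mc{D}' \to \hat{\mc{X}}'$. The continuity of $p$ for the two inverse image topologies is then immediate: for a Zariski open $W \subset \mc{X}'_k$, compatibility of $\varpi$ with the reduction maps gives
\[
p^{-1}\bigl(\red_{\mc{X}'}^{-1}(W)\bigr) \;=\; \red_{\mc{D}'}^{-1}\bigl(\varpi_k^{-1}(W)\bigr),
\]
which is open in the inverse image topology on $\mf{D}$.

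Finally, to show that $\varpi$ is a local isomorphism, I would argue one $U_{ij}$ at a time. In the contractible case $p|_{U_{ij}}$ is already an analytic isomorphism onto $V_i$, so the induced map $\widetilde{U_{ij}} \to \widetilde{V_i}$ is an isomorphism of formal affine opens. In the loop case, the canonical reduction $\widetilde{V_i}$ is a connected affine curve carrying one node (reflecting the loop of $V_i$), and $\widetilde{U_{ij}}$ is identified with the partial normalization of $\widetilde{V_i}$ that splits that node into the two smooth boundary points of $U_{ij}$; the induced map is an open immersion away from these two points and restricts to a local isomorphism at each of them separately. Since every point of $\mc{D}'$ lies in some $\Spf \mc{O}_{U_{ij}}(U_{ij})^\circ$ on which $\varpi$ restricts to an open immersion, $\varpi$ is a local isomorphism of formal schemes.

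The main obstacle I expect is the loop case: one must carefully match the combinatorics of the semi-stable reduction of $X^{\mathrm{an}}$ with that of its universal cover and verify that the canonical reduction of each affinoid $U_{ij}$ really is the partial normalization of $\widetilde{V_i}$ described above. This is essentially a calculation of what the Mumford construction does on a formal neighborhood of a node, and it relies on the compatibility $p^{-1}(\mc{Q}_{\mc{X}'}) = \mc{Q}_{\mc{D}'}$ recorded just before the statement of the lemma.
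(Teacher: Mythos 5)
Your construction of $\varpi$ by gluing the morphisms of canonical formal models induced by $p|_{U_{ij}}$ (pullback preserves functions bounded by $1$), and your derivation of the continuity statement from the commutativity of the reduction square, are essentially sound; they reverse the order of the paper's argument, which proves continuity first and then assembles $\varpi$ from the induced map on special fibers, but this difference is cosmetic. The genuine gap is in your description of the loop case, which is exactly what your final step relies on. Since $V_i$ retracts onto a loop and $\mf{D}$ is simply connected, no subspace of $\mf{D}$ can map bijectively onto $V_i$; by construction $p(U_{ij})=V_i$, so $p|_{U_{ij}}$ necessarily wraps once around the loop and identifies the two ends of $U_{ij}$. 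In particular the two boundary points of $U_{ij}$ are type $2$ (Shilov) points, corresponding to the two generic points of $\widetilde{U_{ij}}$ rather than to closed points, and they both map to the \emph{same} Shilov point of $V_i$, the unique generic point of the irreducible curve $\widetilde{V_i}$. Consequently $\widetilde{U_{ij}}$ is \emph{not} the partial normalization of $\widetilde{V_i}$ at its node: in the Tate-curve picture, where $V_i$ is the curve minus the formal fiber of a smooth point and $U_{ij}$ is a closed annulus minus two residue discs, $\widetilde{U_{ij}}$ has two irreducible components meeting at one node, and \emph{each} of these components surjects onto a dense open of the single component of $\widetilde{V_i}$. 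So $\widetilde{U_{ij}}\to\widetilde{V_i}$ is generically two-to-one, and $\varpi$ restricted to $\Spf\big(\mc{O}(U_{ij})^{\circ}\big)$ is not an open immersion; your concluding sentence therefore fails in the loop case, and the local-isomorphism assertion is left unproved.

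The statement itself is still fine, but to recover it you must shrink beyond the covering $\{U_{ij}\}$: since $p$ is a local isomorphism of analytic spaces, refine the covering by smaller affinoid pieces, open for the inverse image topology, on which $p$ is injective (for instance pieces whose skeleton does not contain a full circuit of the loop), and check that on the corresponding formal opens $\varpi$ is an isomorphism onto its image. This is precisely the route taken in the paper's proof, where $\mf{V}$ is replaced by such a refinement before concluding that $\varpi$ is a local isomorphism.
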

\begin{proof}
Let $V \subset X^{\mathrm{an}}$ be of the form $\red_{\mc{X}'}^{-1}(\widetilde{V})$ for some Zariski open subset $\widetilde{V}$ of $\mc{X}'_k$.
Then, by decomposing $\widetilde{V}$ into its connected components, $V$ can be written as a union of elements $V_i$ of the formal affinoid covering $\mf{V}$ associated with the formal model $\widehat{\mc{X}}'$.
By construction of the formal model $\mc{D}'$, the preimage $p^{-1}(V)=~\bigcup p^{-1}(V_i)$ is an infinite union of affinoid domains of the form $\red_{\mc{D}'}^{-1}(\widetilde{U})$ for some connected Zariski open subset $\widetilde{U}$ of $\mc{D}'_k$, hence $p$ is continuous for the inverse image topology.

Let us show that this continuity is enough to extend $p$ to a morphism of formal models $\varpi\colon \mc{D}' \to \hat{\mc{X}'}$.
First of all, the functoriality of the reduction ensures the existence of a morphism of special fibers $p_k:\mc{D}'_k \to \mc{X}'_k$ compatible with the reduction maps as illustrated in the following commutative diagram:
\[\xymatrix{ \mf{D} \ar[r]^p\ar@{->}[d]_{\red_{\mc{D}'}} & X^{\mathrm{an}} \ar@{->}[d]^{\red_{\mc{X}' }} \\
           \mc{D}'_k \ar[r]^{p_k}  &  \mc{X}'_k. }\]
An affine Zariski open $\Spec(A)$ of $\mc{X}'_k$ can always be completed to get an affine formal subscheme $\Spf(\mc{A}) \subset \mc{X}'$.
The Zariski open $p_k^{-1}(\Spec(A))$ can be covered by affine opens $\Spec(B_i) \subset \mc{D}'_k$, which lift to formal affine subschemes $\Spf(\mc{B}_i) \subset \mc{D}'$.
By glueing all the induced morphisms $\Spf(\mc{B}_i) \to \Spf(\mc{A})$, we can construct a morphism $\varpi\colon \mc{D}' \to \hat{\mc{X}'}$ that lifts $p_k$.
The continuity for the inverse image topology then ensures that the generic fiber of $\varpi$ is the uniformization map $p:\mf{D} \to X^{\mathrm{an}}$, as required.

Since $p$ is a local isomorphism of analytic spaces, after possibly
replacing $\mf{V}$ with a union of smaller affinoid subsets that are open for the inverse image topology we can suppose that $p_{|\mf{V}}$ is an isomorphism into its image.
Hence, $\varpi$ is a local isomorphism.
\end{proof}

\begin{remark}
The construction of the formal scheme $\mc{D}'$ can be associated with any semi-stable model $\mc{X}'$ and is of particular interest in some special cases.
When the genus of $X$ is at least 2 and $\mc{X}$ is the stable model of $X$, one gets the formal model $\mc{D}$ studied by Mumford (see Remark \ref{rmk:Mumformal}). 
When $\mc{X}'$ is the model considered in Proposition \ref{prop:mult1}, that is, the minimal desingularization of $\mc{X}$, one gets that $\mc{D}'$ is the minimal desingularization of $\mc{D}$.
In fact, the minimal desingularization of a semi-stable model is obtained by blowing up $\mc{X}$ a finite number of times at its singular double points $P_1, \dots, P_n$. As a result, if $\mc{Q}$ is the set of type 2 points of $X^{\mathrm{an}}$ associated with $\mc{X}$, then the set $\mc{Q}'$ of type 2 points of $X^{\mathrm{an}}$ associated with $\mc{X}'$ is obtained from $\mc{Q}$ by adding a finite number of type 2 points lying on the formal fibers $\red^{-1}(P_i)$.
Since $\mc{D}'$ induces by construction the set $p^{-1}(\mc{Q}')$ of type 2 points of $\mf{D}$, it is a regular model.
Removing any subset of type 2 points from $p^{-1}(\mc{Q}')$ would create a singularity, hence $\mc{D}'$ is the minimal desingularization of $\mc{D}$.
Thanks to Lemma \ref{lem:formallift}, in all these instances we can extend $p:\mf{D} \to X^{\mathrm{an}}$ to a local isomorphism of formal schemes $\varpi':\mc{D}'\to \mc{X}'$.
\end{remark}

\begin{prop}\label{prop:semistable}
Let $X$ be a Mumford curve, $\mc{X}'$ a semi-stable model of $X$, and
$\mc{D}'$ the corresponding formal model of $\mf{D}$. Then, for every closed point $x \in \mc{D}'$, the map $\varpi':\mc{D}' \to \mc{X}'$ that lifts $p$ induces an isomorphism of local rings
\[ \widehat{\calO}_{\mc{D}',x} \cong \widehat{\calO}_{\mc{X}',\varpi'(x)}. \]

\end{prop}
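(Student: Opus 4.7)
The plan is to deduce the proposition almost immediately from Lemma \ref{lem:formallift}, which already asserts that $\varpi'$ is a local isomorphism of formal schemes. The essential remaining task is a bookkeeping translation between the stalk of a formal scheme and the completed local ring of an algebraic scheme.

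First I would unpack what ``local isomorphism'' means in the present setting: by the final paragraph of the proof of Lemma \ref{lem:formallift}, after possibly refining the formal affinoid covering of $\mf{D}$ used to build $\mc{D}'$, each chart maps isomorphically onto its image in $\hat{\mc{X}}'$. Consequently, for any closed point $x \in \mc{D}'$ there is a formal open neighborhood of $x$ mapping isomorphically onto a formal open neighborhood of $y := \varpi'(x)$ in $\hat{\mc{X}}'$, and so $\varpi'$ induces an isomorphism of stalks $\calO_{\hat{\mc{X}}', y} \xrightarrow{\sim} \calO_{\mc{D}', x}$.

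Second, I would identify these stalks with $\pi_K$-adic completions. Since $\hat{\mc{X}}'$ is the formal completion of the Noetherian scheme $\mc{X}'$ along $V(\pi_K)$, the standard computation gives $\calO_{\hat{\mc{X}}', y} \cong \varprojlim_n \calO_{\mc{X}', y}/\pi_K^n$, the $\pi_K$-adic completion of $\calO_{\mc{X}', y}$. Taking further the $\mathfrak{m}$-adic completion of both sides of the isomorphism above, and using that for a Noetherian local ring $(R,\mathfrak{m})$ with $\pi_K \in \mathfrak{m}$ the $\mathfrak{m}$-adic completion of the $\pi_K$-adic completion of $R$ agrees with the $\mathfrak{m}$-adic completion of $R$ itself, one obtains $\widehat{\calO}_{\mc{D}', x} \cong \widehat{\calO}_{\mc{X}', \varpi'(x)}$.

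The main conceptual subtlety, and thus the only mild obstacle, is precisely this distinction between the formal-scheme stalk (which is $\pi_K$-adically but not $\mathfrak{m}$-adically complete) and the full $\mathfrak{m}$-adic completion appearing in the statement; once this distinction is handled as above, the argument is complete.
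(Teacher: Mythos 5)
Your argument is correct, but it is packaged differently from the paper's proof, which does \emph{not} simply invoke the local-isomorphism conclusion of Lemma~\ref{lem:formallift}. Instead, the paper argues directly: it takes the affinoid $U$ of the covering defining $\mc{D}'$ that contains the formal fiber $\red_{\mc{D}'}^{-1}(x)$, uses the semi-stability of $\mc{X}'$ (which forces the covering to refine the disc/annulus covering, so that $U$ sits inside a fundamental domain for $\Gamma$ and hence $p(U)\cong U$), and then transports this isomorphism through the description of the structure sheaves as sheaves $\calO^\circ$ of functions bounded by $1$, which yields the isomorphism of completed local rings essentially by definition. Your route---cite the lemma for the local isomorphism, deduce an isomorphism of stalks of formal schemes, then complete---has a cleaner logical structure and makes explicit the formal-scheme versus algebraic-scheme completion bookkeeping that the paper leaves implicit; what it gives up is that the real geometric input (injectivity of $p$ on a neighborhood of the whole formal fiber of $x$, coming from the fundamental domain) remains hidden inside the lemma, whose own justification of the local-isomorphism claim is rather terse. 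One technical imprecision worth fixing: the stalk $\calO_{\hat{\mc{X}}',y}$ is the direct limit, over affine formal neighborhoods, of the $\pi_K$-adic completions of their coordinate rings, and the natural map from this ring to $\varprojlim_n \calO_{\mc{X}',y}/\pi_K^n$ need not be an isomorphism; this does not harm your argument, since both rings have the same $\mathfrak{m}$-adic completion, namely $\widehat{\calO}_{\mc{X}',y}$, but the displayed identification should be stated at the level of $\mathfrak{m}$-adic completions rather than as an equality of stalks.
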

\begin{proof}
Let $\mf{U}$ be the formal affinoid covering of $\mf{D}$ associated with the formal model $\mc{D}'$.
Let $U$ be an affinoid in this covering containing $\red_{\mc{D}'}^{-1}(x)$. Since $\mc{X}'$ is semi-stable, then it dominates the minimal semi-stable model, so that $\mf{U}$ is a refinement of the smallest affinoid covering of $\mf{D}$ made of discs and annuli.
In particular, $U$ is contained in a fundamental domain for the action of the Schottky group, hence $p(U) \cong U$.
By construction, we have that the structure sheaf of $\mc{D}'$ is obtained as the sheaf $\calO^\circ$ of functions bounded by 1, and the lift $\varpi'$ is compatible with this operation, hence it induces an isomorphism \[\varpi'(\Spf(\calO^\circ(U))) \cong \Spf(\calO^\circ(U)),\] which in turn induces an isomorphism of the completed local rings $\widehat{\calO}_{\mc{D}',x} \cong \widehat{\calO}_{\mc{X}',\varpi'(x)}$.
\end{proof}

\begin{lemma}\label{Lsamelocalring}
Let $\mf{E}$ be a subset of $\proj^{1, \mathrm{an}}_K$ consisting of type 1 points and let $\mc{Y}$ be a formal scheme whose generic fiber is $\mc{Y}_K \cong \proj^{1, \mathrm{an}}_K \setminus \mf{E}$.
Let $\Sigma$ be a finite set of irreducible components of $\mc{Y}$,
let $S$ be the corresponding set of type 2 points of $\mc{Y}_K$, and
let $\mc{Y}_S$ be the normal model of $\proj^1_K$ corresponding to $S$
via Proposition~\ref{prop:models}.
Let $\mc{Y}_\Sigma$ be the set of closed points of $\mc{Y}$ that lie only on components in $\Sigma$ and let $\phi$ be the injection from $\mc{Y}_\Sigma$ to the set of closed points of $\mc{Y}_S$ induced by the inclusion $\mc{Y}_K \subset \proj^{1, \mathrm{an}}_K$.
Then for every point $x \in \mc{Y}_\Sigma$ there is an isomorphism of local rings 
\[ \widehat{\calO}_{\mc{Y},x} \cong \widehat{\calO}_{\mc{Y}_S,\phi(x)}. \] 
\end{lemma}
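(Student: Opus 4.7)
My strategy is to find a formal affine scheme that appears as a formal open subscheme of both $\mc{Y}$ near $x$ and of the $\pi_K$-adic formal completion of $\mc{Y}_S$ near $\phi(x)$; once this common affine is identified, both completed local rings are computed by completing at the maximal ideal corresponding to $x$ (equivalently $\phi(x)$).

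For the construction, since $x \in \mc{Y}_\Sigma$ lies only on components in $\Sigma$, and only finitely many components of $\mc{Y}$ meet at any given closed point, I would shrink to a formal affine open $\Spf(\mc{A}) \subset \mc{Y}$ containing $x$ whose special fiber contains only components of $\mc{Y}_k$ coming from $\Sigma$. Using Proposition~\ref{prop:models} and Remark~\ref{Rinfinitemodels}, I associate with this choice an affinoid subdomain $V \subset \P^{1,\an}_K$ whose boundary type 2 points are precisely the type 2 points in $S$ corresponding to those components; then the generic fiber of $\Spf(\mc{A})$ is $V \setminus (V \cap \mf{E})$. On the $\mc{Y}_S$ side, I would take a Zariski affine open $\Spec(B) \subset \mc{Y}_S$ around $\phi(x)$ whose special fiber consists precisely of the corresponding components, so that its $\pi_K$-adic completion $\Spf(\hat{B})$ has generic fiber the same affinoid $V$.

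The main step is to identify $\mc{A}$ with $\hat{B}$ as topological $\calO_K$-algebras. Both are rings of analytic functions bounded by $1$: $\hat{B} = \calO^\circ(V)$ and $\mc{A} = \calO^\circ(V \setminus (V \cap \mf{E}))$. These agree by a non-archimedean Riemann extension argument: any analytic function on $V \setminus (V \cap \mf{E})$ that is globally bounded by $1$ is in particular bounded near each removed type 1 point, so it extends uniquely to an analytic function on all of $V$ of sup-norm at most $1$ --- in an affinoid curve, type 1 points are removable singularities for bounded analytic functions. Thus $\Spf(\mc{A}) \cong \Spf(\hat{B})$ with $x$ sent to $\phi(x)$, and completing at the respective maximal ideals yields the desired isomorphism of local rings.

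The main obstacle is the alignment in the second paragraph: verifying that a formal affine neighborhood $\Spf(\mc{A})$ of $x$ in $\mc{Y}$ really does have generic fiber of the stated form $V \setminus (V \cap \mf{E})$, and that the same affinoid $V$ cuts out a formal affine neighborhood of $\phi(x)$ in the completion of $\mc{Y}_S$. This matching is driven by the correspondence $\Sigma \leftrightarrow S$ between generic points of irreducible components on both sides, coupled with the extension of Proposition~\ref{prop:models} to formal models via Remark~\ref{Rinfinitemodels}.
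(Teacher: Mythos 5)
Your overall strategy is in the same spirit as the paper's proof (both come down to recovering the two local rings as rings of power-bounded analytic functions on matching analytic neighborhoods), but your implementation is different and leaves genuine gaps, precisely at the points you defer. The alignment you call ``the main obstacle'' is not bookkeeping: it is the geometric content of the lemma, and Proposition~\ref{prop:models} together with Remark~\ref{Rinfinitemodels} do not deliver it. Concretely, you need $\red_{\mc{Y}}^{-1}(\mc{U}_k)$ (the generic fiber of your chart $\Spf(\mc{A})$) to coincide, inside $\proj^{1,\mathrm{an}}_K$, with $\red_{\mc{Y}_S}^{-1}(\Spec(B)_k)$ up to points of $\mf{E}$; a priori the latter can be strictly larger, because a formal fiber computed on the coarse model $\mc{Y}_S$ may swallow regions that on $\mc{Y}$ reduce to components \emph{not} in $\Sigma$, together with whatever lies beyond them (including points of $\mf{E}$). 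Ruling this out uses exactly the hypothesis that the relevant closed points lie only on components in $\Sigma$, and it must be argued, not inferred from the bijection $\Sigma\leftrightarrow S$. The paper avoids chart-matching altogether: it fixes $x$, notes that its formal fiber $V_x=\red_{\mc{Y}}^{-1}(x)$ is a semi-affinoid subspace of $\proj^{1,\mathrm{an}}_K\setminus\mf{E}$ whose ring of analytic functions agrees with that of the formal fiber of $\phi(x)$ in $\mc{Y}_S$, and then invokes \cite[Theorem~2.1]{Martin17} to identify $\widehat{\calO}_{\mc{Y},x}$ (a reduced special $\mc{O}_K$-algebra) with the bounded functions on $V_x$, and likewise for $\widehat{\calO}_{\mc{Y}_S,\phi(x)}$. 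So the only identification needed there is of the formal fibers of the single point $x$, with no intermediate affine charts and no completion-of-charts step.

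The second gap is the extension step. Your appeal to ``type 1 points are removable singularities for bounded analytic functions'' is the finite-puncture statement, but in the intended application $\mf{E}$ is the limit set $\mc{L}$ of a Schottky group, which is infinite (typically uncountable) and closed; extending bounded functions across such a set, or even making sense of $V\cap\mf{E}$ sitting in the interior of an affinoid, is not a one-line fact and would need an argument or a reference. The cleaner resolution, which again reduces to the alignment question above, is to show that the affinoid neighborhood can be chosen so that $V\cap\mf{E}=\emptyset$: points of $\mf{E}$ only occur at the ends of infinite chains of components of $\mc{Y}$, so a region reducing onto an open set that meets only the finitely many components through $x$ stays away from $\mf{E}$. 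Finally, the identities $\mc{A}=\calO^\circ(\mc{A}_K)$ and $\hat{B}=\calO^\circ(V)$ are not automatic for an arbitrary formal model; they rely on normality (respectively, on $\mc{Y}$ being built from canonical reductions as in \S\ref{Smumford}), and this should be said or cited, just as the paper does by invoking the structure of $\widehat{\calO}_{\mc{Y},x}$ as a reduced special $\mc{O}_K$-algebra before applying \cite[Theorem~2.1]{Martin17}.
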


\begin{proof}

Let us fix a point $x \in \mc{Y}_\Sigma$. By definition, its formal fiber $V_x=\red_\mc{Y}^{-1}(x)$ is a subset of $\proj^{1, \mathrm{an}}_K \setminus \mf{E}$. As a result, the inclusion $\mc{Y}_K \subset \proj^{1, \mathrm{an}}_K$ induces an equality between the ring of analytic functions on $V_x$ and the ring of analytic functions on the formal fiber $V_{\varphi(x)}:=\red_{\mc{Y}_S}^{-1}(\varphi(x))$.
The local ring $\widehat{\mc{O}}_{\mc{Y},x}$ is a reduced special $R$-algebra whose generic fiber is the semi-affinoid analytic space $V_x$.
We can then apply \cite[Theorem 2.1]{Martin17} to show that $\widehat{\mc{O}}_{\mc{Y},x}$ is isomorphic to the subring of bounded functions of $\mc{O}_{V_x}(V_x)$.
The same holds for the local ring $\widehat{\calO}_{\mc{Y_S},\varphi(x)}$,
\end{proof}

\subsubsection{Lifting automorphisms of curves}\label{Slifting}
Let $X$ be a $K$-curve, and let $X^{\mathrm{an}}$ be its Berkovich analytification.
The analytic curve $X^{\mathrm{an}}$ is a path connected and locally contractible topological space by \cite[Corollary 4.3.3]{Berkovich90}.
As such, it has a universal cover $p\colon \frakY \to X^{\mathrm{an}}$, uniquely determined up to deck transformations of the universal covering space $\frakY$.
The cover $p$ is a local homeomorphism, hence we can identify for every $y\in \frakY$ a sufficiently small open neighborhood $V$ of $y$ with its image $p(V)$ in $X^{\mathrm{an}}$ and define on $\frakY$ the unique analytic structure that makes the universal cover $p$ into a local isomorphism of locally ringed spaces. 
The analytic curves $\frakY$ and $X^{\mathrm{an}}$ are isomorphic if and only if $X^{\mathrm{an}}$ is simply connected. 
Note that, in general, $\frakY$ will not be compact, and therefore not the analytification of a projective $K$-curve.

We now concern ourselves with lifting automorphisms from a Berkovich
curve to its universal cover.
We start by remarking that for $K$-curves we have a canonical inclusion $\Aut(X) \subset \Aut(X^{\mathrm{an}})$ that will allow us to identify the automorphisms of a curve with those of its analytification.
If $\delta \in \Aut(X)$ is a $K$-linear automorphism, we call a \emph{lift to the universal cover} any automorphism $\widetilde{\delta} \in \Aut(\frakY)$ that makes the following diagram commute
\[\xymatrix{ \frakY \ar[r]^p\ar[d]_{\widetilde{\delta}} & X^{\mathrm{an}} \ar[d]^{\delta} \\
           \frakY \ar[r]^p  &  X^{\mathrm{an}}. }\]

\begin{prop}\label{prop:generallifting}
Let $X$ be a smooth connected $K$-curve, and let $\delta\in \Aut(X)$ be a $K$-linear automorphism of $X$.
Then, there exists a lift $\widetilde{\delta} \in \Aut(\frakY)$ of $\delta$ to the universal cover $\frakY$ of $X^{\mathrm{an}}$.
\end{prop}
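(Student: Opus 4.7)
The plan is to apply the standard topological lifting criterion from covering space theory, then upgrade the resulting topological lift to an analytic automorphism using the fact that the analytic structure on $\frakY$ was defined so as to make $p$ a local isomorphism.

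First, I would note that $X^{\mathrm{an}}$ is path connected and locally contractible, so both $X^{\mathrm{an}}$ and $\frakY$ admit the usual toolkit of covering space theory. Fix a base point $y_{0}\in\frakY$ and set $x_{0}=p(y_{0})$. Since $\delta$ is a homeomorphism of $X^{\mathrm{an}}$, the composite $\delta\circ p \colon \frakY \to X^{\mathrm{an}}$ is a continuous map from a simply connected space to $X^{\mathrm{an}}$. The lifting criterion for the cover $p\colon \frakY \to X^{\mathrm{an}}$ requires $(\delta\circ p)_{*}(\pi_{1}(\frakY,y_{0})) \subseteq p_{*}(\pi_{1}(\frakY,y_{0}'))$ for some $y_{0}'\in p^{-1}(\delta(x_{0}))$; since $\frakY$ is simply connected this condition is automatic, and so there is a unique continuous map $\widetilde{\delta}\colon \frakY \to \frakY$ with $\widetilde{\delta}(y_{0})=y_{0}'$ and $p\circ \widetilde{\delta} = \delta\circ p$.

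Next I would verify that $\widetilde{\delta}$ is a homeomorphism. Apply the same construction to $\delta^{-1}$, choosing the basepoint image so that $\widetilde{\delta^{-1}}(y_{0}') = y_{0}$. Then both $\widetilde{\delta^{-1}}\circ\widetilde{\delta}$ and the identity are continuous lifts of $p$ through $p$ that send $y_{0}$ to $y_{0}$, so by the uniqueness part of the lifting criterion they coincide. The analogous argument shows $\widetilde{\delta}\circ\widetilde{\delta^{-1}} = \mathrm{id}_{\frakY}$, so $\widetilde{\delta}\in \mathrm{Homeo}(\frakY)$.

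Finally, I would upgrade this to an analytic automorphism. The analytic structure on $\frakY$ is by construction the unique one making $p$ a local isomorphism of locally ringed spaces. Given $y\in \frakY$, choose a connected open neighborhood $V$ of $y$ on which $p$ restricts to an analytic isomorphism $p\vert_{V}\colon V \xrightarrow{\sim} p(V)$, and a connected open neighborhood $V'$ of $\widetilde{\delta}(y)$ on which $p$ restricts to an analytic isomorphism $p\vert_{V'}\colon V' \xrightarrow{\sim} p(V')$, chosen small enough so that $\widetilde{\delta}(V)\subseteq V'$. Then on $V$ the map $\widetilde{\delta}$ factors as $(p\vert_{V'})^{-1}\circ \delta\circ (p\vert_{V})$, which is a composition of analytic isomorphisms, hence analytic. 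The same argument applied to $\widetilde{\delta^{-1}}$ gives the inverse analytic morphism, so $\widetilde{\delta}\in \Aut(\frakY)$.

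There is no serious obstacle here: the argument is entirely formal, and the only point that requires any care is noting that the analytic structure on $\frakY$ was set up precisely so that statements proved locally via $p$ transfer automatically, so the topological lift is forced to be an analytic automorphism.
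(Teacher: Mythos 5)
Your proof is correct and follows essentially the same route as the paper: lift $\delta\circ p$ through the covering $p$ using simple connectedness of $\frakY$, obtain the inverse by lifting $\delta^{-1}\circ p$ and invoking uniqueness of pointed lifts, and upgrade the homeomorphism to an analytic automorphism via the local factorization $(p\vert_{V'})^{-1}\circ\delta\circ p\vert_{V}$, which is exactly how the paper exploits that $p$ is a local isomorphism of analytic spaces. If anything, your justification that $\frakY$ is path-connected and locally path-connected (inherited from $X^{\mathrm{an}}$ being path-connected and locally contractible) is cleaner than the paper's appeal to compactness of $\frakY$, which in fact need not hold.
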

\begin{proof}
Let us fix $x\in X^{\mathrm{an}}$ and choose $y,y' \in \mf{Y}$ such that $p(y)=x$ and $p(y')=\delta(x)$.
Then, the map $p:(\mf{Y},y') \to (X^{\mathrm{an}},\delta(x))$ is a covering and $\delta \circ p:(\mf{Y},y) \to (X^{\mathrm{an}},\delta(x))$ is a local homeomorphism of pointed topological spaces.
The space $\mf{Y}$ is connected, and therefore by \cite[Theorem 3.2.1]{Berkovich90} it is also path-connected and locally path-connected.
The continuous map $\delta \circ p$ can then be lifted uniquely to a continuous map $\widetilde{\delta}:(\mf{Y},y') \to (\mf{Y},y)$.
To show that this is a bijection, observe that we can repeat the same argument above and lift $\delta^{-1} \circ p:(\mf{Y},y) \to (X^{\mathrm{an}},x)$ to a continuous map $(\mf{Y},y) \to (\mf{Y},y')$ which is clearly the inverse of $\widetilde{\delta}$.
Finally, we can upgrade the homeomorphism $\widetilde{\delta}$ to an automorphism of analytic spaces. Since both $\delta \circ p$ and $p$ are local isomorphisms of analytic spaces, every point $z \in \mf{Y}$ has a neighborhood $Y_z$ such that $\widetilde{\delta}$ is an isomorphism when restricted to $Y_z$.
Hence $\widetilde{\delta}$ is a homeomorphism that is also a local isomorphism, and therefore it is an automorphism of the analytic space $\mf{Y}$.

\end{proof}

Let $L/K$ be a finite Galois extension and let $X$ be a Mumford curve over $L$.
By Schottky uniformization (Theorem \ref{thm:unif}), the universal
cover $\frakY$ is isomorphic to the space $\P^{1,\mathrm{an}}_L
\setminus \mathcal{L}$, where $\mathcal{L}$ is the limit set of the
Schottky group associated with $X$.  Note that we can consider any $L$-analytic space as a $K$-analytic
space by post-composing with the canonical map $\calM(L) \to
\calM(K)$.
For the scope of this paper, we are interested in relaxing the $L$-linearity requirement for an automorphism of $L$-analytic spaces to a weaker condition that reflects compatibility with the structure of the Galois group $\Gal(L/K)$, as made precise in the following definition.
\begin{defn}
Let $\frakX$ be a $L$-analytic space, and let $\sigma \in \Gal(L/K)$. 
A $K$-linear automorphism $\delta$ of $\frakX$ is said to be $\sigma$-semilinear if it fits in a commutative diagram of the form:
\[\xymatrix{ \frakX \ar[d]_{\delta} \ar[r] & \calM(L) \ar[r]\ar[d]_{\sigma} & \calM(K)\ar[d]_{\mathrm{id}}  \\
           \frakX \ar[r] & \calM(L) \ar[r] & \calM(K). }\]
Here and throughout, by abuse of notation, we use $\sigma: \calM(L) \to \calM(L)$ to
mean the map $(\sigma^{-1})^*$.  This has the consequence that if
$\frakX$ is in fact defined over $K$, then the 
$\sigma$-action on $\frakX$ that acts by $\sigma$ on the coordinates of $L$-points is $\sigma$-semilinear.

A $K$-linear automorphism of $\frakX$ is said to be $L/K$-semilinear if it is $\sigma$-semilinear for some $\sigma \in \Gal(L/K)$.
In particular, every $L$-linear automorphism is $L/K$-semilinear.
\end{defn}

Let $\delta$ be a $\sigma$-semilinear automorphism of
$X^{\mathrm{an}}$.
Since by definition $\delta$ is $K$-linear, we can apply Proposition \ref{prop:generallifting} to lift it to a $K$-linear automorphism $\widetilde{\delta}$ of the universal covering space $\frakY$, in such a way that the following diagram commutes:

\begin{equation}\label{diagram:unicover}
\xymatrix{ \frakY \ar[r]^p\ar[d]_{\widetilde{\delta}} & X^{\mathrm{an}} \ar[d]_{\delta} \ar[r] & \calM(L) \ar[r]\ar[d]_{\sigma} & \calM(K)\ar[d]_{\mathrm{id}}  \\
           \frakY \ar[r]^p  &  X^{\mathrm{an}} \ar[r] & \calM(L) \ar[r] & \calM(K).}
\end{equation}

From this, we automatically obtain that the automorphism $\widetilde{\delta}$ is $L/K$-semilinear.


\begin{prop}\label{prop:extensionofliftings}
Every $\sigma$-semilinear automorphism of $\frakY$ 
can be extended to a $\sigma$-semilinear automorphism of $\P^{1,\mathrm{an}}_L$.
\end{prop}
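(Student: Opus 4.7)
The plan is to exploit that $\widetilde{\delta}$, as a lift of $\delta$, must normalize the Schottky group $\Gamma$, and then to extend $\widetilde{\delta}$ across the limit set $\mathcal{L}$ in two stages: first continuously via the tree structure of the skeleton, and then analytically via a removable-singularity argument.

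First, I would record that since $p\colon\frakY\to X^{\mathrm{an}}$ is the universal cover with deck group $\Gamma$, the assumption that $\widetilde{\delta}$ descends to $\delta$ forces $\widetilde{\delta}\Gamma\widetilde{\delta}^{-1}=\Gamma$. As a homeomorphism of $\frakY$, $\widetilde{\delta}$ preserves the intrinsically defined skeleton $\Sigma_{\mf{D}}$ and hence induces an automorphism of this infinite tree. The set of ends of $\Sigma_{\mf{D}}$ is in natural bijection with the limit set $\mathcal{L}$, so the tree automorphism produces a unique continuous bijective extension $\hat{\delta}\colon\P^{1,\mathrm{an}}_L\to\P^{1,\mathrm{an}}_L$ of $\widetilde{\delta}$. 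This extension is automatically $\sigma$-semilinear, since $\widetilde{\delta}$ is, and the extension is determined by continuity on the dense open $\frakY$.

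Next, I would promote continuity to analyticity. Write $\hat{\delta}=\sigma\circ\phi$, where $\phi$ is an $L$-linear continuous self-map of $\P^{1,\mathrm{an}}_L$ analytic on $\frakY$; it suffices to show $\phi\in\PGL_2(L)$. Fix an affine coordinate $z$ on $\P^1_L$. For each $\ell\in\mathcal{L}$, because $\mathcal{L}$ is a closed subset of type $1$ points, one can choose a small open disc $D\ni\ell$ with $D\cap\mathcal{L}=\{\ell\}$. Then $D\setminus\{\ell\}\subset\frakY$ and $\phi^{\ast}z$ is analytic there; after replacing $z$ by $1/(z-a)$ for an appropriate $a$ if $\phi(\ell)=\infty$, it is also bounded thanks to the continuity of $\phi$ at $\ell$. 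The non-archimedean removable-singularity theorem then extends $\phi^{\ast}z$ analytically across $\ell$, and gluing these local extensions produces a global meromorphic function on $\P^{1,\mathrm{an}}_L$. By GAGA this function is rational, so $\phi\in\PGL_2(L)$; composing with $\sigma$ yields the desired $\sigma$-semilinear extension of $\widetilde{\delta}$.

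The main obstacle I expect is the analyticity step, because when $g\geq 2$ the limit set $\mathcal{L}$ is typically an uncountable Cantor-like subset of $\P^1(L)$, which precludes a finite induction on removable-singularity extensions. What makes the argument go through is that every $\ell\in\mathcal{L}$ is a type $1$ point with a neighborhood basis of open discs meeting $\mathcal{L}$ only at $\ell$, so each local extension is genuinely isolated; combined with the continuity of $\hat{\delta}$ established in the first step, these local analytic extensions automatically patch into a single globally analytic (hence rational, hence Möbius) map.
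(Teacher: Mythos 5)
Your first step (the normalization $\widetilde{\delta}\Gamma\widetilde{\delta}^{-1}=\Gamma$, and sending ends of the skeleton, equivalently limits of $\Gamma$-orbits, to ends) is in the same spirit as the paper: the paper's proof simply writes $\ell=\lim_{i\to\infty}\gamma_i(x)$, uses $\widetilde{\delta}\circ\gamma_i=\gamma_i'\circ\widetilde{\delta}$ to get $\widetilde{\delta}(\gamma_i(x))=\gamma_i'(\widetilde{\delta}(x))$, and defines the extension on $\calL$ by $\ell\mapsto\ell'=\lim_{i\to\infty}\gamma_i'(\widetilde{\delta}(x))$. No removable-singularity or GAGA argument appears there.

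The genuine gap is in your second step. You claim that every $\ell\in\calL$ admits an open disc $D$ with $D\cap\calL=\{\ell\}$, so that $D\setminus\{\ell\}\subset\frakY$ and a removable-singularity theorem applies. This is false whenever $g\geq 2$: the limit set of a nonelementary Schottky group is a perfect (Cantor-like) compact set of type $1$ points, so \emph{no} point of $\calL$ is isolated in $\calL$; every disc around $\ell$ contains infinitely many other limit points, hence is not contained in $\frakY\cup\{\ell\}$, and the local extension of $\phi^{\ast}z$ across $\ell$ cannot even be set up. Your closing paragraph acknowledges the Cantor structure of $\calL$ but then asserts exactly the isolation property that this structure rules out, so the "obstacle" is not actually resolved. (Only in the Tate case $g=1$, where $\calL=\{0,\infty\}$, does your disc argument make sense.) To repair the analyticity step over a perfect limit set one needs a different mechanism --- e.g.\ the orbit-limit definition of the extension as in the paper, or the Gerritzen--van der Put type results identifying analytic automorphisms of the discontinuity set $\mf{D}$ with restrictions of elements of $\PGL_2(L)$ normalizing $\Gamma$ --- rather than a pointwise removable-singularity argument.
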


\begin{proof}
Let $\tilde{\delta}$ be a $\sigma$-semilinear automorphism of $\frakY$.
Let $\tilde{\tilde{\delta}} = \tilde{\sigma}^{-1}|_{\mf{Y}} \circ \tilde{\delta}$, where $\tilde{\sigma}$ is the purely arithmetic action of $\sigma$ on
$\proj^{1, \mathrm{an}}_L$.
Then $\tilde{\tilde{\delta}} \colon \mf{Y} \to \proj^{1, \mathrm{an}}_L$ is
an $L$-linear open immersion. 
The limit set $\mc{L}$ is compact, because it is the complement of the domain of discontinuity for the action of a Schottky group, which is open.
This means that we can apply the same argument of the proof of \cite[Lemma
6.5.1]{PoineauTurchetti21}\footnote{As stated, this lemma applies
  only to an \emph{automorphism} of $\mf{Y}$, but the proof goes
  through verbatim for an open immersion $\mf{Y} \to \proj^{1,
    \mathrm{an}}_L$. In fact, the proof in \cite{PoineauTurchetti21} makes an erroneous claim
 that $\tau^{-1} \circ \sigma$ is an automorphism
  of $O$ (which plays the role of $\mf{Y}$), when in fact it is only an open immersion from $O$ to
  $\proj^1$, but the open immersion is good enough for the rest of the
  proof to work.} to conclude that $\tilde{\tilde{\delta}}$ is induced by an element of $PGL_2(L)$.  By
abuse of notation, we denote the corresponding $L$-automorphism of
$\proj^{1, \mathrm{an}}_L$ by $\tilde{\tilde{\delta}}$ as well. 
The $\sigma$-semilinear
automorphism of $\proj^{1, \mathrm{an}}_L$ that agrees with
$\tilde{\delta}$ on $\mf{Y}$ is hence given by $\tilde{\sigma} \circ \tilde{\tilde{\delta}}$.
%
\end{proof}


\subsection{Mac Lane valuations}\label{Smaclane}
\subsubsection{Valuation theory}\label{Svaluation}
Recall from Remark~\ref{Rgeometric} that a geometric valuation on
$K(x)$ is a discrete valuation that restricts
to the valuation $v_K$ on $K$ and whose residue field is finitely
generated over $k$ with transcendence degree $1$. 
There is a partial
order on geometric valuations given by $v \preceq w$ if and only if
$v(f) \leq w(f)$ for all $f \in K[x]$.

Let $v_0$ be the
\emph{Gauss valuation} on $K(x)$, uniquely determined by $v_0(\sum_i a_ix^i) =
\inf_i(v_K(a_i))$ on polynomials.  By \cite[Corollary~7.4]{FGMN} and \cite[Theorem 8.1]{MacLane}, or
\cite[Theorem 4.31]{Ruth}, every geometric valuation $v$ with $v
\succeq v_0$ can be written as a so-called \emph{Mac Lane valuation}
in the form
\begin{equation}\label{Emaclane}
v = [v_0,\, v_1(\phi_1) = \lambda_1,\, \ldots,\, v_n(\phi_n) =
\lambda_n]
\end{equation}
where $n \geq 0$, the $\phi_i$ are monic polynomials in $K[x]$ of
increasing degree and the $\lambda_i$ are positive rational numbers.  The meaning of the
notation is this: to calculate $v(f)$, write out the $\phi_n$-adic
expansion $$f = a_e\phi_n^e + a_{e-1} \phi_n^{e-1} + \cdots + a_0.$$
Then recursively calculate $v(f) = \inf_{0 \leq i \leq e} v_{n-1}(a_i)
+ i \lambda_n$, where $v_{n-1}$ is shorthand for the valuation
$$[v_0,\, v_1(\phi_1) = \lambda_1,\, \ldots,\, v_{n-1}(\phi_{n-1})
= \lambda_{n-1}].$$  In general, if $v$ is given as in
(\ref{Emaclane}) and $0 \leq i \leq n$, we write $v_i$ for the valuation $[v_0,\, \ldots,\,
v_i(\phi_i) = \lambda_i]$.

Not all choices of the $\phi_j$ and $\lambda_j$ give
rise to a geometric valuation (or a valuation at all), but if $v$ as
in (\ref{Emaclane}) is a geometric valuation, then increasing
$\lambda_n$ results in another geometric valuation.  For
fuller background, see \cite{MacLane} or \cite{Ruth}.  When we use the
term \emph{Mac Lane valuation}, it is assumed that we are talking about a
geometric valuation. 

\subsubsection{Relationship with normal models}\label{Snormal}
Let $\mc{Y}$ be a normal model of $\proj^1_K$, and choose a rational
function $x$ on $\proj^1_K$ generating the function field.  Recall from
Proposition~\ref{prop:models} that there is a corresponding set $S$ of
geometric valuations on $K(x)$.  We say that the valuations $v \in S$ are
\emph{included} in $\mc{Y}$.  If $S = \{v\}$, we call the corresponding model the
\emph{$v$-model} of $\proj^1_K$.  If $v$ and $w$ are included in
$\mc{Y}$, we say that $v$ is \emph{adjacent} to $w$ if $v \prec w$ and
there is no $z$ included in $\mc{Y}$ with $v \prec z \prec w$, or if $w \prec v$ and
there is no $z$ included in $\mc{Y}$ with $w \prec z \prec v$.  

One can always make a linear
fractional change of variable in $x$ such that every $v \in S$
satisfies $v(x) \geq 0$ (this is tantamount to making sure the
function $x$ does not have a pole at any generic point of the special
fiber).  This is equivalent to $v \succeq v_0$.  In particular, we may assume that $S$ consists of
Mac Lane valuations.

\begin{lemma}\label{Lcalculatemultiplicity}
Let $\ol{V}$ be an irreducible component of the special fiber of $\mc{Y}$ corresponding to a
Mac Lane valuation $v = [v_0,\, \ldots,\, v_n(\phi_n) = \lambda_n]$, and
let $m$ be the lcm of the  denominators of all the $\lambda_i$, when
written in lowest terms.
Then the multiplicity $m_{\ol{V}}$ of $\ol{V}$ in the special fiber
equals $m$.
\end{lemma}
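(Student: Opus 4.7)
My plan is to translate the claim into a statement about value groups, and then establish that statement by induction on $n$ using the recursive definition of Mac Lane valuations.

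First, I would observe that by equation~(\ref{Evaldef}), the valuation $v$ corresponding to $\overline{V}$ is given by $v(f) = \frac{1}{m_{\overline{V}}} \ord_{\overline{V}}(f)$, where $\ord_{\overline{V}}$ takes integer values. Therefore $m_{\overline{V}} v(f) \in \mathbb{Z}$ for every $f \in K(x)^\times$, and $m_{\overline{V}}$ is the \emph{smallest} positive integer with this property — equivalently, the value group $v(K(x)^\times)$ is exactly $\frac{1}{m_{\overline{V}}} \mathbb{Z}$. Thus it suffices to show that the value group of $v$ equals $\frac{1}{m} \mathbb{Z}$, where $m$ is the lcm of the denominators of the $\lambda_i$ written in lowest terms.

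Next I would prove by induction on $n$ that the value group of $v_n = [v_0,\, v_1(\phi_1)=\lambda_1,\ldots,v_n(\phi_n)=\lambda_n]$ is $\frac{1}{m_n}\mathbb{Z}$, where $m_n = \lcm(q_1,\ldots,q_n)$ and $\lambda_i = p_i/q_i$ in lowest terms. For $n = 0$, the Gauss valuation $v_0$ has value group $\mathbb{Z} = v_K(K^\times)$ (since $K$ has algebraically closed residue field, every nonzero element of $K(x)$ has Gauss value in $\mathbb{Z}$), matching the empty lcm $m_0 = 1$.

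For the inductive step, assume the value group of $v_{n-1}$ is $\frac{1}{m_{n-1}}\mathbb{Z}$. Any $f \in K[x]$ has a unique $\phi_n$-adic expansion $f = \sum_{i=0}^{e} a_i \phi_n^i$ with $\deg a_i < \deg \phi_n$, and the recursive definition of Mac Lane valuations gives
\[
v_n(f) = \min_{0 \le i \le e}\bigl(v_{n-1}(a_i) + i\lambda_n\bigr).
\]
Thus every value of $v_n$ lies in $\frac{1}{m_{n-1}}\mathbb{Z} + \lambda_n \mathbb{Z}$. Conversely, $v_n$ restricts to $v_{n-1}$ on polynomials of degree $<\deg \phi_n$ (so every element of $\frac{1}{m_{n-1}}\mathbb{Z}$ is attained), and $v_n(\phi_n) = \lambda_n$ is attained; hence the value group of $v_n$ (extended to $K(x)^\times$ by differences) equals
\[
\tfrac{1}{m_{n-1}}\mathbb{Z} + \lambda_n \mathbb{Z} = \tfrac{1}{m_{n-1}}\mathbb{Z} + \tfrac{1}{q_n}\mathbb{Z} = \tfrac{1}{\lcm(m_{n-1},\, q_n)}\mathbb{Z} = \tfrac{1}{m_n}\mathbb{Z},
\]
where in the middle equality I used $\gcd(p_n,q_n)=1$. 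This completes the induction and yields $m_{\overline{V}} = m$.

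The main technical point to be careful about is that the formula $v_n(f) = \min_i(v_{n-1}(a_i) + i\lambda_n)$ really computes $v_n$ on all of $K[x]$ (not just an inequality), and that $v_n(\phi_n) = \lambda_n$ is actually attained rather than being a strict inequality — both are built into the construction of Mac Lane valuations and can be cited from \cite{MacLane} or \cite{Ruth}. Everything else is elementary arithmetic of subgroups of $\mathbb{Q}$.
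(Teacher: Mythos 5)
Your proposal is correct and follows essentially the same route as the paper: the paper's proof simply asserts that the value group of $v$ is $\frac{1}{m}\mathbb{Z}$ and then invokes (\ref{Evaldef}), exactly the two steps you carry out. Your induction on $n$ merely fills in the value-group computation that the paper leaves as a known fact about Mac Lane valuations, so the two arguments agree in substance.
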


\begin{proof}
The value group of $v$ is $\frac{1}{m}\ints$, so the lemma follows from 
(\ref{Evaldef}).
\end{proof}

Given two Mac Lane valuations $w$ and $w'$, we write $\inf(w, w')$ for
the maximal (for $\preceq$) Mac Lane valuation $v$ satisfying
$v \preceq w$ and $v \preceq w'$.  By \cite[Proposition 2.24]{KW},
$\inf(w, w')$ exists and is unique.

\begin{lemma}\label{Linfclosed}
  Every regular model $\mc{Y}$ of $\proj^1_K$ is \emph{inf-closed}.  That is,
  if $w$ and $w'$ are two valuations included in $\mc{Y}$, then
  $\inf \{w, w'\}$ is also included in $\mc{Y}$.
\end{lemma}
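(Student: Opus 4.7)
The plan is to work in the Berkovich tree $\mathbb{P}^{1,\mathrm{an}}_K$, identifying geometric valuations with type 2 points via Remark~\ref{Rgeometric}. Let $S$ be the finite set of type 2 points corresponding to the components of $\mc{Y}_k$. After a $\PGL_2(K)$-change of coordinate, every element of $S$ satisfies $w \succeq v_0$ for the Gauss valuation $v_0$; with this convention, $\inf\{w,w'\}$ coincides with the tree-theoretic meet of $w$ and $w'$ rooted at $v_0$, that is, the unique point where the three geodesics from $v_0, w, w'$ pairwise concur.

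Suppose for contradiction that $c := \inf\{w, w'\} \notin S$. By replacing each of $w, w'$ with the smallest element of $S$ lying on the geodesic from $c$ to $w$, respectively $c$ to $w'$, I may assume no element of $S$ lies strictly between $c$ and $w$ nor strictly between $c$ and $w'$; a routine check shows this replacement preserves the infimum. Then the interior of the geodesic $[w, w']$ contains no element of $S$, and being connected, it lies in a single formal fiber $\red_\mc{Y}^{-1}(P)$ for some closed point $P$ of $\mc{Y}$. A standard specialization argument (taking limits toward the endpoints $w, w'$ and using anti-continuity of the reduction map) yields $P \in \ol{V}_w \cap \ol{V}_{w'}$.

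By regularity of $\mc{Y}$ at $P$, one obtains a presentation $\widehat{\calO}_{\mc{Y}, P} \cong \mc{O}_K[[u_1, u_2]]/(\pi_K - u_1^{m(w)} u_2^{m(w')} \cdot \mathrm{unit})$, where $u_1, u_2$ are local parameters cutting out $\ol{V}_w, \ol{V}_{w'}$ respectively. The next step is to translate this into the statement that, after an appropriate change of variable $g \in \PGL_2(K)$, the formal fiber $\red_\mc{Y}^{-1}(P)$ is a standard open annulus of the form $\{z : r_1 < v(z - a) < r_2\}$, whose two boundary type 2 points are $\preceq$-comparable with respect to $gv_0$. Since the tripod-meet operation is $\PGL_2(K)$-equivariant, comparability of $gw$ and $gw'$ with respect to $gv_0$ is equivalent to comparability of $w, w'$ with respect to $v_0$. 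But since $c = \inf\{w, w'\}$ lies strictly below both, $w$ and $w'$ are incomparable, contradicting the conclusion of the previous sentence.

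The main obstacle is rigorously extracting the standard-annulus shape of the formal fiber from the algebraic description of $\widehat{\calO}_{\mc{Y}, P}$. A direct approach parametrizes the type 2 points of $\red_\mc{Y}^{-1}(P)$ via the relation $m(w) \cdot \xi(u_1) + m(w') \cdot \xi(u_2) = \xi(\pi_K) = 1$, which exhibits its skeleton as a single line segment from $w$ to $w'$, and then combines this with Lemma~\ref{Lsamelocalring} to match the local analytic structure with that of an auxiliary model of $\mathbb{P}^1_K$ in which the annulus shape is manifest after a projective change of coordinate. Alternatively, the explicit description of regular models of $\mathbb{P}^1_K$ via Mac Lane valuations implicit in \cite[Theorem~7.8]{ObusWewers} already encodes the inf-closure property and may be invoked directly, bypassing both the reduction and the local analysis.
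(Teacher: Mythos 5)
There is a genuine gap, and it sits exactly where you flag your ``main obstacle.'' First, the step ``by regularity of $\mc{Y}$ at $P$, one obtains $\widehat{\calO}_{\mc{Y},P}\cong \mc{O}_K[[u_1,u_2]]/(\pi_K-u_1^{m(w)}u_2^{m(w')}\cdot\mathrm{unit})$'' is not a consequence of regularity: the lemma is stated for regular models, not regular snc-models, and regularity of the total space does not imply that the special fiber is snc at $P$, nor even that $\ol{V}_w$ and $\ol{V}_{w'}$ are the only components through $P$ (your reduction step removes points of $S$ from the open geodesic $(w,w')$, but not components whose type 2 points hang off that geodesic or lie below $c$, and such components may well pass through $P$). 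One can in fact show that every regular model of $\proj^1_K$ has snc special fiber, but that requires its own argument (relatively minimal regular models are $\proj^1$-bundles, plus factorization of birational morphisms into point blow-ups), none of which appears in your proposal. Second, and more seriously, even granting the snc presentation the intended contradiction does not materialize. \emph{Any} two distinct type 2 points of $\proj^{1,\an}_K$ are the two boundary points of an open annulus, and after a suitable $g\in\PGL_2(K)$ they become nested disc points; so ``the formal fiber is a standard annulus whose ends are comparable in the new coordinate'' carries no information about comparability in the original order $\preceq$. Comparability for $\preceq$ is governed by where $\infty$ (equivalently $v_0$) attaches to the segment $[w,w']$, and $\PGL_2$-equivariance of the median only converts the question into ``where does $gv_0$ sit relative to the annulus,'' which your local analysis at $P$ does not control. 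Indeed, in the putative bad configuration (only $\ol{V}_w,\ol{V}_{w'}$ through $P$ and $c\notin S$), the formal fiber of $P$ is the entire middle component of $\proj^{1,\an}_K\setminus\{w,w'\}$, which contains $v_0$ and $\infty$; then $gv_0$ lies inside the standard annulus and attaches at the interior point $gc$, so the ends are \emph{not} comparable with respect to $gv_0$ and no contradiction follows. The real obstruction is quantitative -- regularity at a crossing point forces a relation between the modulus of the annulus and the multiplicities $m(w),m(w')$ -- and that numerical input is absent from your argument.

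This quantitative input is precisely what the paper's proof supplies through Mac Lane valuations: it writes $v=\inf\{w,w'\}$ in Mac Lane form, notes that not both $w(\phi_n)$ and $w'(\phi_n)$ can exceed $\lambda_n$, concludes via \cite[Proposition~4.35]{Ruth} that $v$ is an initial truncation of (say) $w$, and then applies \cite[Theorem~7.8]{ObusWewers} to see that $v$ is included in the minimal regular resolution of the $w$-model, which $\mc{Y}$ dominates because it is regular and includes $w$. Your closing suggestion to ``invoke \cite[Theorem~7.8]{ObusWewers} directly, bypassing both the reduction and the local analysis'' is not a proof either: that theorem describes the minimal regular resolution of a single $v$-model, and passing from it to inf-closedness of an arbitrary regular model still requires the truncation step and the domination argument -- i.e., essentially the paper's proof. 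As written, your argument establishes the easy reductions but not the contradiction, so the lemma is not proved.
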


\begin{proof}
Let $v = \inf \{w, w'\}$.  Write $$v = [v_0,\, v_1(\phi_1) =
\lambda_1,\, \ldots,\, v_n(\phi_n) = \lambda_n].$$  We may assume $v
\neq w$ and $v \neq w'$.  It is not possible that both $w(\phi_n) >
\lambda_n$ and $w'(\phi_n) > \lambda_n$ because then $v$ would not
equal $\inf(w, w')$, as $\lambda_n$ could be increased.
So by \cite[Proposition~4.35]{Ruth}, at least one of $w$ or $w'$, say
$w$, is of the form
$$w = [v_0,\, w_1(\phi_1) = \lambda_1,\, \ldots, w_n(\phi_n) =
\lambda_n,\, \ldots,\, w_m(\phi_m) = \lambda_m]$$ with $m > n$.  By \cite[Theorem
7.8]{ObusWewers}, $v$ is included in the minimal regular resolution of the
$w$-model of $\proj^1$.  Since $\mc{Y}$ is a regular resolution of the
$w$-model of $\proj^1_K$, we thus have that $v$ is included in $\mc{Y}$. 
\end{proof}

\begin{lemma}\label{Lonlyvi}
Let $\mc{X}$ be a normal model of $\proj^1_K$ corresponding to a
set ${v^1, v^2, \ldots, v^r}$ of Mac Lane valuations, where
$$v^j = [v_0^j, v_1^j(\phi^j_1) = \lambda^j_1, \ldots,
v_{n_j}^j(\phi^j_{n_j}) = \lambda^j_{n_j}]$$ and $v_0^j = v_0$ for all $j$.
If $\mc{X}' \to \mc{X}$ is the minimal regular resolution, then every
principal component of the special fiber of $\mc{X}'$ corresponds to
some $v_i^j$, for $j \in \{1, \ldots, r\}$ and $i \in \{0, \ldots, n_j\}$. 
\end{lemma}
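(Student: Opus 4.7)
The plan is to combine the tree structure of the dual graph of $\mc{X}'_k$ with the explicit description of Hirzebruch--Jung-type chains of Mac Lane valuations appearing in minimal regular resolutions. First, I would reduce the statement to a purely combinatorial claim about the dual graph. Since every irreducible component of a normal model of $\proj^1_K$ corresponds to a Mac Lane valuation whose residue field has transcendence degree one over $k$, each component is birational (and, on the regular snc-model $\mc{X}'$, isomorphic) to $\proj^1_k$. Hence the genus-$\geq 1$ clause in the definition of principal never applies. Moreover, because $\mc{X}'$ is a regular snc-model, each singular point of $\mc{X}'^{\red}_k$ is a transverse crossing of exactly two distinct smooth components. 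Consequently, a component $\ol{V}$ is principal if and only if the corresponding vertex in the dual graph has degree at least $3$.

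Next, I would characterize the set of geometric valuations included in $\mc{X}'$. By Lemma~\ref{Linfclosed}, this set is closed under taking infima, and one checks from the Mac Lane form that $\inf\{v^j,v^{j'}\}$ is of the shape $v_i^{j}$ for some $i \leq \min(n_j,n_{j'})$, which is already among the $v_i^j$. Invoking \cite[Theorem~7.8]{ObusWewers} (as in the proof of Lemma~\ref{Linfclosed}), the minimal regular resolution of a single $v^j$-model of $\proj^1_K$ is obtained by adjoining exactly the truncations $v_0^j, v_1^j, \ldots, v_{n_j-1}^j$ together with, along each segment $[v_{i-1}^j, v_i^j]$, a Hirzebruch--Jung chain of intermediate Mac Lane valuations determined by the continued fraction expansion of $\lambda_i^j$. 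Thus the valuations included in $\mc{X}'$ form a tree rooted at the common Gauss valuation $v_0$, with each segment $[v_{i-1}^j, v_i^j]$ subdivided by a linear chain of resolution valuations.

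Finally, I would read off the degrees of components in this tree. An interior valuation of a Hirzebruch--Jung chain is, by construction, adjacent (in the sense of Proposition~\ref{prop:models} and \S\ref{Snormal}) only to its two immediate neighbors in the chain, hence corresponds to a component of degree exactly $2$ in the dual graph. Branchings can therefore only occur at valuations where two or more chains meet --- namely the valuations $v_i^j$, at which the two chains coming from $[v_{i-1}^j, v_i^j]$ and $[v_i^j, v_{i+1}^j]$ (and possibly chains from other indices $j'$ sharing the same initial truncation) are glued together. Hence every vertex of degree $\geq 3$, i.e.\ every principal component, corresponds to some $v_i^j$.

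The main obstacle will be justifying rigorously that the valuations included in $\mc{X}'$ really are organized as described, in particular that the chain along $[v_{i-1}^j, v_i^j]$ does not interact with the chain from another segment except at the endpoint $v_i^j$ itself. This requires carefully unpacking the continued fraction description in \cite[Theorem~7.8]{ObusWewers} and the way in which inf-closure interacts with Hirzebruch--Jung chains; once this is established, the degree-counting step is essentially combinatorial.
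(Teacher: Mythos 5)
Your target --- show that only the truncations $v_i^j$ can be branch vertices of the dual graph of $\mc{X}'_k$ --- is the right one, but the step where you dispose of the interaction between different $j$'s is wrong, and that is exactly where the content of the lemma lies. You assert that $\inf\{v^j,v^{j'}\}$ ``is of the shape $v_i^j$''; this is false. Take $v^1=[v_0,\,v_1(x)=2]$ and $v^2=[v_0,\,v_1(x-\pi_K)=2]$ (the sup-norms $\eta_{0,2}$ and $\eta_{\pi_K,2}$ of two disjoint discs inside $\ol{B}(0,1)$): their infimum is $[v_0,\,v_1(x)=1]=\eta_{0,1}$, which is a truncation of neither, since truncations keep the same $\lambda_i$'s. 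So the valuations included in $\mc{X}'$ are \emph{not} exhausted by the truncations together with chains lying inside the segments $[v_{i-1}^j,v_i^j]$: the paths from the various $v^j$'s merge at infima of the above kind, and these merge points are precisely where vertices of degree $\geq 3$ can be created (with three discs hanging off a common parent, the common infimum meets three components of the resolved model), so your degree count cannot simply pass over them. Two further soft spots: transcendence degree $1$ over $k$ does not by itself give genus $0$ (that is what the paper cites \cite[Lemma~7.1]{ObusWewers} for), and you yourself defer the verification of the chain structure of each single resolution to ``unpacking'' \cite[Theorem~7.8]{ObusWewers}. As it stands the proposal is a plan whose two structural inputs (the shape of each $S_j$ and the behaviour at the merge points) are respectively postponed and incorrectly asserted.

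For comparison, the paper never attempts a global description of the set $S$ of valuations included in $\mc{X}'$. It first identifies $S$ with $\bigcup_j S_j$, where $S_j$ is the set for the minimal resolution of the single $v^j$-model (\cite[Lemma~5.3]{OS1}), and then argues pointwise at a component $v\in S$ that is not a truncation: the component has genus $0$ by \cite[Lemma~7.1]{ObusWewers}; intersections of components are governed by adjacency in $S$ (\cite[Proposition~3.5]{KW}, which needs the inf-closedness of Lemma~\ref{Linfclosed}); there is exactly one adjacent predecessor by \cite[Proposition~2.25]{KW}; and there is at most one adjacent successor, proved by combining the structure of valuations $\succeq v$ from \cite[Proposition~4.35]{Ruth} with the truncation-closure statement of \cite[Theorem~7.8]{ObusWewers}, which forces any adjacent successor to be the level-$n$ augmentation of $v$ itself and hence unique. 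If you want to salvage your route, you must prove this successor-uniqueness (or some substitute for it) at every non-truncation valuation of $S$, including the infima exhibited above; that is essentially the assertion to be proved, not a routine combinatorial observation, and none of the inputs the paper uses for it (\cite[Lemma~5.3]{OS1}, \cite[Proposition~4.35]{Ruth}, \cite[Propositions~2.25 and 3.5]{KW}) appears in your proposal.
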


\begin{proof}
Let $\mc{X}_j$ be the $v^j$-model of $\proj^1_K$, and let $\mc{X}_j'
\to \mc{X}_j$ be its minimal regular resolution.  
  By \cite[Lemma~5.3]{OS1}, the set $S$ of Mac Lane valuations
  corresponding to $\mc{X}'$ is $\bigcup_{j=1}^r S_j$, where $S_j$ is
  the set of Mac Lane
  valuations corresponding to the $\mc{X}_j'$.

Suppose $v \in S \setminus \bigcup_{i, j} \{v_i^j\}$ 
corresponds to a component $\ol{V}$ of the special fiber of $\mc{X}'$.
By \cite[Lemma~7.1]{ObusWewers}, $\ol{V}$ has genus zero.  The set $S$
satisfies \cite[Assumption~3.4(a)]{KW} automatically, and by
Lemma~\ref{Linfclosed}, it also satisfies \cite[Assumption~3.4(b)]{KW}.  By
\cite[Proposition~3.5]{KW}, the only irreducible components
intersecting $\ol{V}$ correspond to valuations $w$ adjacent to $v$ in $S$.  By
\cite[Proposition~2.25]{KW} there is only one such $w$ with $w \prec
v$.  We claim there cannot exist $w_1 \neq w_2$ adjacent to $v$ in $S$ with $v \prec w_1$ and
$v \prec w_2$, and thus $\ol{V}$ is not prinicipal.

To prove the claim, write $v = [v_0,\, v_1(\phi_1) = \lambda_1,
\ldots,\, v_n(\phi_n) = \lambda_n]$, and assume that $w \succ v$ with
$w$ adjacent to $v$ in $S$.  By
\cite[Proposition~4.35]{Ruth}, $w$
is of the form
$$[v_0,\, v_1(\phi_1) = \lambda_1,\, \ldots,\, w_n(\phi_n) =
\mu_n,\, v_{n+1}(\phi_{n+1}) = \lambda_{n+1},\, \ldots,\,
w_m(\phi_m) = \lambda_m]$$
with $\mu_n \geq \lambda_n$ and $m \geq n$.  Since $w \in S$, it is in
$S_j$ for some $j$. By
\cite[Theorem~7.8]{ObusWewers}, $w_n \in
S_j$, and thus $w_n \in S$.  Since $w$ is adjacent to
$v$ and $v \preceq w_n \preceq w$, either $w = w_n$ or $v  = w_n$ (with
$m > n$).  If
$v = w_n$ with $m > n$, then \cite[Theorem~7.8]{ObusWewers} implies
that $v = w_n$ is in fact $v_n^j$, a contradiction. So $w = w_n$, and
the fact that $w$ is adjacent to $v$ in $S$ uniquely determines
$\mu_n$, and thus $w$.  This proves the claim.
\end{proof}

\begin{prop}\label{Pmaclaneresolution}
  Let $\mc{X}$ be a normal model of $\proj^1_K$, and let $\mc{X}'$ be its
  minimal regular resolution.  If $m$ is the
  lcm of the multiplicities of the components of the special fiber of $\mc{X}$, then the stability
  index of $\mc{X}'$ divides $m$.
\end{prop}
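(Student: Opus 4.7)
The plan is to combine Lemmas \ref{Lonlyvi} and \ref{Lcalculatemultiplicity} in a more or less direct fashion. First I would make a linear fractional change of variable in $x$ so that every valuation corresponding to a component of the special fiber of $\mc{X}$ is $\succeq v_0$, as explained at the start of \S\ref{Snormal}; this lets me write these valuations as Mac Lane valuations
\[ v^j = [v_0,\, v_1^j(\phi_1^j) = \lambda_1^j,\, \ldots,\, v_{n_j}^j(\phi_{n_j}^j) = \lambda_{n_j}^j] \]
for $j = 1, \ldots, r$. By Lemma \ref{Lcalculatemultiplicity}, the multiplicity $m_j$ of the component of $\mc{X}_k$ corresponding to $v^j$ equals the lcm of the denominators (in lowest terms) of $\lambda_1^j, \ldots, \lambda_{n_j}^j$, and by hypothesis $m = \lcm(m_1, \ldots, m_r)$.

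Now let $\ol{W}$ be a principal component of the special fiber of $\mc{X}'$. By Lemma \ref{Lonlyvi}, $\ol{W}$ corresponds to some truncation $v_i^j = [v_0,\, v_1^j(\phi_1^j) = \lambda_1^j,\, \ldots,\, v_i^j(\phi_i^j) = \lambda_i^j]$ with $0 \leq i \leq n_j$. Applying Lemma \ref{Lcalculatemultiplicity} again, the multiplicity of $\ol{W}$ equals the lcm of the denominators of $\lambda_1^j, \ldots, \lambda_i^j$, which visibly divides the lcm of the denominators of $\lambda_1^j, \ldots, \lambda_{n_j}^j$, namely $m_j$, and hence divides $m$. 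Since the stability index of $\mc{X}'$ is by definition the lcm of the multiplicities of its principal components, and every such multiplicity divides $m$, the stability index itself divides $m$.

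The substantive content of the argument is really housed in Lemma \ref{Lonlyvi}, which guarantees that the resolution process introduces no \emph{new} principal components beyond truncations of the original Mac Lane valuations. Once that is in hand, the divisibility of multiplicities is immediate from the fact that truncation of a Mac Lane valuation can only shrink the set of $\lambda$'s whose denominators contribute to the multiplicity. So I don't expect any technical obstacle here beyond citing the two previous lemmas correctly.
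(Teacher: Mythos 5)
Your proposal is correct and follows essentially the same route as the paper's proof: both reduce to Lemma~\ref{Lonlyvi} to identify each principal component of $\mc{X}'$ with a truncation $v_i^j$ of some Mac Lane valuation $v^j$ included in $\mc{X}$, and then use Lemma~\ref{Lcalculatemultiplicity} to see that the multiplicity of the truncation divides that of $v^j$, hence divides $m$. No gaps; the normalization $v^j \succeq v_0$ you mention is exactly the setup the paper assumes in \S\ref{Snormal}.
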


\begin{proof}
Let $S$ be the set of Mac Lane valuations corresponding to the model
$\mc{X}$.   If $w$ is a Mac Lane valuation
corresponding to a principal component of $\mc{X}'$, then
Lemma~\ref{Lonlyvi} implies that there exists a valuation $v = [v_0,\,
v_1(\phi_1) = \lambda_1,\, \ldots,\, v_n(\phi_n) = \lambda_n] \in S$
such that $w = v_i$ for some $i \leq n$.
Now, the multiplicity $m_v$ of the component corresponding to $v$ is the lcm of the denominators of the
$\lambda_j$, $j \leq n$.  Since the multiplicity $m_w$ of the component
corresponding to $w$ is the lcm of the denominators of the
$\lambda_j$ for $j \leq i$, we have $m_w \mid m_v$ as desired.
\end{proof}

\begin{corollary}\label{Carithmeticaction}
Let $L/K$ be a finite Galois extension, write $\proj^1_L =
\Proj L[x_0, x_1]$, and let $x = x_1/x_0$.   Suppose $\Gal(L/K)$ acts
on $\proj^1_L$ in the standard manner via its action on $L$, leaving
$x$ constant.  Let $\mc{Y}$ be a semi-stable model
of $\proj^1_L$ over $\mc{O}_L$ such that the action of $\Gal(L/K)$ extends to
$\mc{Y}$.  Then the stability index of the minimal regular resolution
$\mc{X}'$ of $\mc{X} \colonequals
\mc{Y}/\Gal(L/K)$ divides $[L:K]$.
\end{corollary}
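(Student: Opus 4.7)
The plan is to reduce the statement to Proposition \ref{Pmaclaneresolution} via a direct multiplicity computation on the quotient model $\mc{X} = \mc{Y}/\Gal(L/K)$.

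First, I would check that $\mc{X}$ is a normal model of $\proj^1_K$. Normality holds because $\mc{Y}$ is normal (being semi-stable in the sense of the paper's definition) and the quotient of a normal scheme by a finite group is normal. Properness over $\mc{O}_K$ follows from properness of $\mc{Y}$ over $\mc{O}_L$ together with finiteness of $\mc{O}_L$ over $\mc{O}_K$; flatness is automatic for a dominant morphism from an integral scheme to a Dedekind scheme. The generic fiber is $\proj^1_L / \Gal(L/K) \cong \proj^1_K$, using that the Galois action fixes $x$.

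The main step is to show that every irreducible component of $\mc{X}_k$ has multiplicity dividing $[L:K]$. Let $\ol{W}$ be such a component and pick a component $\ol{V}$ of $\mc{Y}_k$ mapping onto $\ol{W}$. Since $\mc{Y}$ is semi-stable over $\mc{O}_L$, $\ol{V}$ has multiplicity $1$; equivalently, the corresponding geometric valuation $v_{\ol{V}}$ on $L(x)$ satisfies $v_{\ol{V}}(\pi_L) = 1$ and has value group $\ints$ on $L(x)$. Restricting $v_{\ol{V}}$ to $K(x)$ yields a valuation whose value group is a subgroup $d\ints$ of $\ints$ for some positive integer $d$, and which sends $\pi_K$ to $e(L/K)$. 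Renormalizing so that $\pi_K \mapsto 1$ produces the (unique) valuation $v_{\ol{W}}$ on $K(x)$ extending $v_K$ and centered on the generic point of $\ol{W}$, so it matches the valuation corresponding to $\ol{W}$ via Proposition~\ref{prop:models}; its value group is $\frac{d}{e(L/K)}\ints$. By (\ref{Evaldef}), the multiplicity of $\ol{W}$ is therefore $e(L/K)/d$, which divides $e(L/K)$, hence divides $[L:K]$.

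Finally, let $m$ denote the lcm of the multiplicities of the components of $\mc{X}_k$; the previous paragraph gives $m \mid [L:K]$. Proposition~\ref{Pmaclaneresolution} applies to the normal model $\mc{X}$ of $\proj^1_K$ and yields that the stability index of the minimal regular resolution $\mc{X}'$ divides $m$, and the conclusion follows. The only subtlety I anticipate is the bookkeeping around the two different natural normalizations of valuations on $L(x)$ and $K(x)$, but this is routine once one keeps track of which uniformizer is assigned the value $1$.
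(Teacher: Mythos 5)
Your proposal is correct and follows essentially the same route as the paper: the paper's proof simply asserts that every component of $\mc{X}_k$ has multiplicity dividing $[L:K]$ and then invokes Proposition~\ref{Pmaclaneresolution}. Your valuation-theoretic computation (restricting the multiplicity-one valuation $\ord_{\ol{V}}$ from $L(x)$ to $K(x)$ and renormalizing at $\pi_K$, giving $m_{\ol{W}} = e(L/K)/d \mid [L:K]$) is exactly the justification the paper leaves implicit, so it is a faithful, slightly more detailed version of the same argument.
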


\begin{proof}
The scheme $\mc{X}$ is a model of $\proj^1_K$ such that all irreducible
components of its special
fiber have multiplicity dividing $[L:K]$.  The corollary now follows
from Proposition~\ref{Pmaclaneresolution}.

\end{proof}

\section{Descent and fields of definition}\label{Sdescent}
In \S\ref{Smumforddescent}, we give a criterion for a Mumford curve
over a Galois extension $L/K$ to descend to a curve over $K$.  In
\S\ref{Sfieldofdefinition}, we discuss what it means for a type 2
point of the analytification of a Mumford curve $X$ over $L$ to be defined over a subfield of $L$,
and we give some consequences for the $\Gal(L/K)$-action on certain semistable
models of $X$.

We take the $\Gal(L/K)$-action on $\proj^{1, \mathrm{an}}_L$ to be the
usual action on points.  

\subsection{Descent of Mumford curves}\label{Smumforddescent}
The following proposition shows that a Mumford curve over $L$ descends to $K$ if its Schottky group is Galois invariant, after possibly performing a change of parameter.
\begin{prop}\label{Pdescentcriterion}
Let $L/K$ be a Galois extension.
Let $Y$ be a Mumford curve over $L$.  Denote by $\Gamma \subset \PGL_2(L)$ the corresponding Schottky group and by $\calL \subset \P^{1, \mathrm{an}}_L(L)$ the corresponding limit set, in such a way that $Y$ is the quotient of $\P^{1, \mathrm{an}}_L \setminus \calL$ by the action of $\Gamma$.
Under the natural actions of $\Gamma$ and $\Gal(L/K)$ on $\P^{1,
  \mathrm{an}}_L$, we consider the following three statements:
\begin{enumerate}
\item There exists a $K$-curve $X$ such that $X_L \cong Y$;
\item After possibly conjugating $\Gamma$ by an element of
  $PGL_2(L)$, for every $\sigma \in \Gal(L/K)$ one has
  $\sigma^{-1}\Gamma\sigma = \Gamma$ (the composition is viewed inside
  the group of $L/K$-semilinear automorphisms of $\proj^1_L$);
\item After possibly conjugating $\Gamma$ by an element of $PGL_2(L)$,
  for every $\sigma \in \Gal(L/K)$ and $\gamma =\begin{bmatrix} a_1 & a_2 \\ a_3 & a_4 \end{bmatrix} \in \Gamma$ one has $\begin{bmatrix} \sigma(a_1) & \sigma(a_2) \\ \sigma(a_3) & \sigma(a_4)\end{bmatrix} \in \Gamma$.
\end{enumerate}
Then $(b) \iff (c) \implies (a)$, and if $Y$ has genus 1, then all
three are equivalent.
Moreover, if $(b)$ holds, then every $\sigma \in \Gal(L/K)$ satisfies $\sigma(\calL)=\calL$.

\end{prop}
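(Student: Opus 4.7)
The plan is four-fold: verify $(b) \iff (c)$ via an elementary matrix computation; deduce $\sigma(\calL)=\calL$ from $(b)$; establish $(b) \implies (a)$ by constructing a Galois descent datum on $Y^{\mathrm{an}}$ from the normalized Schottky action; and in the genus~$1$ case, prove $(a) \implies (b)$ using Tate's theory.

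For $(b) \iff (c)$, I would directly compute that if $\sigma \in \Gal(L/K)$ acts on $\proj^{1,\mathrm{an}}_L$ in the standard way and $\gamma = \bigl[\begin{smallmatrix} a_1 & a_2 \\ a_3 & a_4\end{smallmatrix}\bigr] \in \PGL_2(L)$, then the conjugate $\sigma^{-1}\gamma\sigma$ (composed as semilinear automorphisms of $\proj^{1,\mathrm{an}}_L$) acts on a point $[z_0 : z_1]$ as the matrix with entries $\sigma^{-1}(a_i)$. Hence $\sigma^{-1}\Gamma\sigma \subseteq \Gamma$ is equivalent to the entry-wise containment $\sigma^{-1}(\Gamma) \subseteq \Gamma$; letting $\sigma$ range over $\Gal(L/K)$ and using that this defines a group action promotes containment to equality in both (b) and (c). The claim $\sigma(\calL)=\calL$ follows immediately, since $\calL$ is characterized as the set of accumulation points of $\Gamma$-orbits in $\proj^{1,\mathrm{an}}_L$, and $\sigma$ sends these to accumulation points of $\sigma\Gamma\sigma^{-1}$-orbits, which by $(b)$ are again $\Gamma$-orbits.

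For $(b) \implies (a)$, invariance of $\calL$ means $\sigma$ preserves $\mf{D} = \proj^{1,\mathrm{an}}_L \setminus \calL$, and normalization of $\Gamma$ by $\sigma$ lets $\sigma$ descend to a $\sigma$-semilinear automorphism $\delta_\sigma$ of $Y^{\mathrm{an}} = \Gamma \backslash \mf{D}$. The assignment $\sigma \mapsto \delta_\sigma$ is a cocycle because its representatives on $\proj^{1,\mathrm{an}}_L$ compose as a semilinear action. Via the analytification equivalence for projective $L$-curves, this gives an algebraic $\Gal(L/K)$-semilinear action on $Y$, and effective Galois descent for (quasi-)projective schemes then produces a $K$-curve $X$ with $X_L \cong Y$, establishing $(a)$.

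For $(a) \implies (b)$ in genus $1$: the Mumford curve $Y=X_L$ has a smooth $k$-point on the special fiber of its semi-stable model (the smooth locus of a cycle of $\P^1$'s is dense), which lifts via Hensel's lemma to an $L$-rational point. Hence $Y$ is an elliptic curve over $L$, and the Abel--Jacobi map identifies $Y$ with $\Jac(Y)=E_L$, where $E=\Jac(X)$ is defined over $K$ and has potentially multiplicative reduction. The Tate parameter $q$ satisfying $j(q) = j(E)$ is determined by the universal power series $q = 1/j + 744/j^2 + \cdots$ in $1/j(E)$, and since $j(E)\in K$ with $v_K(j(E)) < 0$, we obtain $q \in K^\times$. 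The Schottky group of $E_L$ can then be conjugated to $\bigl\langle \mathrm{diag}(q,1)\bigr\rangle \subset \PGL_2(K) \subset \PGL_2(L)$, which is manifestly Galois-invariant. I expect the main obstacle to be in $(b) \implies (a)$: careful verification that the analytic semilinear action indeed algebraizes to a descent datum on $Y$, so that the abstract Galois descent theorem applies without issues; the genus~$1$ converse is comparatively soft once one appeals to the rationality of the Tate parameter.
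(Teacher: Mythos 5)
Your argument is correct, and it diverges from the paper in an interesting way in the genus~$1$ direction. The parts $(b)\iff(c)$, the invariance $\sigma(\calL)=\calL$, and $(b)\implies(a)$ are essentially the paper's route: the paper also descends the purely arithmetic action on $\mf{D}$ to $\sigma$-semilinear automorphisms $\varphi_\sigma$ of $Y$, checks the compatibility $\varphi_{\sigma\tau}=\varphi_\sigma\circ\varphi_\tau^{\sigma}$, and invokes Weil's descent criterion (in K\"ock's formulation) --- which is exactly your ``effective Galois descent for quasi-projective schemes''; the algebraization step you flag as the main obstacle is handled there by full faithfulness of analytification applied to the $L$-linear isomorphisms $Y\to Y^{\sigma}$ encoded in the Cartesian squares, so your worry is real but routine. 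Where you genuinely differ is $(a)\implies(b)/(c)$ for genus~$1$: the paper lifts the Galois action on $X_L$ to the universal cover (Proposition~\ref{prop:generallifting}), compares the lift with the arithmetic action to get an element of $\PGL_2(L)$ conjugating $\tilde\sigma^{-1}\Gamma\tilde\sigma$ to $\Gamma$, and then uses conjugation-invariance of the multiplier together with $|\sigma(\beta)|=|\beta|$ to force $\sigma(\beta)=\beta$; you instead pass to $E=\Jac(X)$, note $j(E)\in K$ with $v_K(j(E))<0$, and use the integral power series $q=j^{-1}+744\,j^{-2}+\cdots$ inverting $j(q)$ to get $q\in K^{\times}$. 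Your route is softer and purely arithmetic, but it silently uses two facts you should make explicit: that the given rank-one Schottky group, once its generator is normalized to $\operatorname{diag}(\beta,1)$ with $|\beta|<1$, uniformizes the Tate curve with parameter $\beta$, and that the Tate parameter is an isomorphism invariant (injectivity of $q\mapsto j(q)$ on $0<|q|<1$), so that $\beta=q\in K$ and hence the conjugated $\Gamma$ is entrywise Galois-stable as required for $(c)$. The paper's multiplier argument has the advantage of staying entirely inside the Schottky picture (and its strategy is the one revisited in Remark~\ref{Rmumforddescent} for higher genus), while yours buys a quick proof from classical Tate curve theory at the cost of being special to genus~$1$ in a more essential way.
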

\begin{proof}
$(b) \implies (a)$: First we show that condition $(b)$ implies that, up to reparametrization, the limit set $\calL$ is globally fixed by the elements of the Galois group $\Gal(L/K)$, proving the last sentence in the statement of the proposition.
More precisely, let us assume $(b)$ and fix a parameter of $\panL$ that makes the Schottky group $\Gamma$ invariant by $\Gal(L/K)$-conjugation.
 If we write a point $\ell$ of $\calL$ as $\ell = \lim_{i\to \infty}
\gamma_i(x)$ for a certain $x\in \P^{1, \mathrm{an}}_L$, then we
deduce from the condition $\sigma\gamma\sigma^{-1}\in \Gamma$ for all $\gamma \in \Gamma$ that  $\sigma(\ell)=\lim_{i\to \infty} \gamma'_i(\sigma(x))$ for some sequence $(\gamma'_i)_{i\in \N}$ of elements of $\Gamma$, which means that $\sigma(\ell)$ is also in the limit set.
By applying the same argument to $\sigma^{-1}$ one gets that $\sigma(\calL)=\calL$. This ensures that the arithmetic action of $\Gal(L/K)$ on $\panL$ restricts to an action on the universal cover $\mf{D}$ of $Y$: 
\[\psi\colon \Gal(L/K) \times \mf{D} \to \mf{D}.\]
Let us denote by $\psi_\sigma$ the automorphism of $\mf{D}$ defined by $\psi_\sigma(x) = \psi(\sigma, x)$.
Since $\psi$ comes from the natural arithmetic action, we have that $\psi_\sigma$ is obtained as the pullback of $\sigma$ in the sense that it fits in the following Cartesian diagram:
\[\xymatrix{ \ar @{} [dr] |{\square} \mf{D}^\sigma \ar[d] \ar[r]^{\psi_\sigma} & \mf{D}
\ar[d] \\
           \calM(L) \ar[r]^{\sigma} & \calM(L).}\] 
Moreover, using the assumption $(b)$ we have that $\psi_\sigma(\gamma(x))=\gamma'(\psi_\sigma(x))$ for some $\gamma' \in \Gamma$, which ensures that $\psi_\sigma$ induces a $\sigma$-semilinear automorphism $\varphi_\sigma$ of the Mumford curve $Y$.
From the fact that the uniformization map is $L$-linear, we obtain
that the map $\varphi_\sigma$ can be obtained as the pullback of $\sigma$ in the sense that it fits in the following Cartesian diagram:
\[\xymatrix{ \ar @{} [dr] |{\square} Y^\sigma \ar[d] \ar[r]^{\varphi_\sigma} & Y
\ar[d] \\
           \Spec(L) \ar[r]^{\sigma} & \Spec(L).}\]
           In particular, we have that $Y^\sigma \cong Y$ for all $\sigma \in \Gal(L/K)$.
Let $\tau\in \Gal(L/K)$, and let $\varphi_{\tau}^{\sigma}\colon
Y^{\sigma\tau} \to Y^{\sigma}$ be the Cartesian pullback via $\Spec(L)
\stackrel{\tau}{\to} \Spec(L)$. By compatibility with the arithmetic action, we have that $\varphi_{\sigma\tau}=\varphi_\sigma \circ \varphi_\tau^{\sigma}$.
Using Weil's criterion (\cite[Theorem 1]{Weil56} in the slightly
weaker form provided by \cite[Theorem (1.9)]{Koeck}), we get that the
field of definition of $Y$ is contained in $K$, which corresponds to
proving $(a)$.\\

The equivalence between $(b)$ and $(c)$ is a direct computation left to the reader.\\

$(a) \implies (c)$ in genus 1:
Assume $(a)$ and consider the base change $X_L$,
which is a Tate curve associated to a Schottky group with a single
generator $\gamma_1$. After a change of variable we may
assume $\gamma_1 = \begin{pmatrix}
\beta & 0 \\0& 1
\end{pmatrix}$ with $|\beta| < 1$.  The corresponding limit set is $\{0, \infty\}$.  
The base change of $X$ induces a Galois action on $X_L$.
For every $\sigma \in \Gal(L/K)$, its action on $X_L$ can be lifted to a $\sigma$-semilinear automorphism of $\P^{1, \mathrm{an}}_L \setminus \{0,\infty\}$ by Proposition \ref{prop:generallifting}.
Let us call this extension $\tilde{\tau}$ and denote by $\tilde{\sigma}$ the automorphism of $\P^{1, \mathrm{an}}_L$ induced by the purely arithmetic action of $\sigma$. 
Note that, by virtue of $\tilde{\tau}$ being a lifting of an automorphism of $X_L$,
we have that $\tilde{\tau} \Gamma \tilde{\tau}^{-1} = \Gamma$.

Thus, the composition $\tilde{\sigma}^{-1} \circ \tilde{\tau}$ is an $L$-linear automorphism $\gamma_\sigma \in \PGL_2(L)$ satisfying 
\[\gamma_\sigma \Gamma \gamma_\sigma^{-1} = \tilde{\sigma}^{-1} \tilde{\tau} \Gamma \tilde{\tau}^{-1} \tilde{\sigma} = \tilde{\sigma}^{-1} \Gamma \tilde{\sigma}.\]
Since $\Gamma$ is generated by a unique element $\gamma_1$, this relation translates to 
\[\tilde{\sigma}^{-1} \gamma_1 \tilde{\sigma} = \gamma_\sigma \gamma_1^\epsilon \gamma_\sigma^{-1},\] where $\epsilon=\pm 1$.
This results in $\tilde{\sigma}^{-1} \gamma_1 \tilde{\sigma}$ being conjugate by an element of $PGL_2(L)$ to $\gamma_1$ (if $\epsilon=- 1$ this is true because $\gamma_1$ is conjugate to $\gamma_1^{-1}$ by $\begin{pmatrix}
0 & 1 \\1& 0
\end{pmatrix}$). Since the ratio of the eigenvalues of an element of $PGL_2$ is
conjugation-invariant, $\sigma(\beta) \in \{\beta, \beta^{-1}\}$.  But
$\sigma$ preserves absolute values, so $\sigma(\beta) \neq
\beta^{-1}$, implying that $\sigma(\beta) = \beta$.
By repeating the argument with every $\sigma \in \Gal(L/K)$ we
conclude that $\beta\in K$, which implies $(c)$, since $\gamma_1$ is a
generator.
\end{proof}

\begin{remark}\label{Rmumforddescent}
  It would be interesting to determine whether the implication $(a)
  \implies (c)$ in Proposition~\ref{Pdescentcriterion} holds for
  all Mumford curves. Here is a general framework one could use.  Let $\Gamma$ be a Schottky group, and let $X$, $Y$, $L$, and $K$ be as
  in Proposition~\ref{Pdescentcriterion}(a).  Write $N(\Gamma)$ for
  the normalizer of $\Gamma$ in $PGL_2(L)$, and write $N^{sl}(\Gamma)$
  for the normalizer of $\Gamma$ in $PGL_2(L) \rtimes \Gal(L/K)$, with
  the standard action for the semidirect product --- this latter group is
  the set of $(L/K)$-semilinear actions on $\proj^1_L$.\footnote{As an aside, one can show
  that $N^{sl}(\Gamma)$ consists of the ordered pairs $(\gamma,
  \sigma)$ such that $\gamma \Gamma \gamma^{-1} = \sigma^{-1} \Gamma
  \sigma$.
  That is, conjugation of $\Gamma$ by $\gamma$ yields the
  same group as acting on all of the entries of the matrices in
  $\Gamma$ by $\sigma$.}

  We then have the
  following pushout diagram of exact sequences of groups:

\begin{equation}\label{Eliftingdiagram}
   \xymatrixrowsep{0.8cm}
   \xymatrixcolsep{2cm}
    \xymatrix{
      1 \ar[r] & N(\Gamma) \ar[d] \ar[r] & N^{sl}(\Gamma) \ar[d] \ar[r]
     & \Gal(L/K) \ar[r] \ar@{=}[d] & 1 \\
1 \ar[r] & N(\Gamma)/\Gamma  \ar[r]  & N^{sl}(\Gamma)/\Gamma \ar[r]
& \Gal(L/K) \ar[r] & 1}
\end{equation}
Since $Y = \Gamma \backslash (\proj^{1, \an}_L \setminus \mc{L})$, 
Proposition~\ref{prop:generallifting}
and (\ref{diagram:unicover}) show that 
 any $L$-semilinear automorphism of $X_L \cong Y$
lifts to an $L$-semilinear automorphism of $\proj^{1, \an}_L \setminus
\mc{L}$, and this lift is unique up to translation by $\Gamma$.  So
the semilinear $\Gal(L/K)$-action on $Y$ lifts to a semilinear
$\Gal(L/K)$-action on $\proj^{1, \an}_L \setminus \mc{L}$ up to
translation by $\Gamma$, which in
turn lifts to a semilinear $\Gal(L/K)$-action on $\proj^1_L$ up to
translation by $\Gamma$ using
Proposition~\ref{prop:extensionofliftings}.  In other words, if part
$(a)$ of Proposition~\ref{Pdescentcriterion} holds, then the bottom exact sequence of (\ref{Eliftingdiagram})
admits a \emph{section} from $\Gal(L/K)$.  If we could show that the
\emph{top} exact sequence of (\ref{Eliftingdiagram}) were to admit a
section from $\Gal(L/K)$, then an application of Hilbert's Theorem 90
(see Lemma~\ref{Lhilb90} below) would show that, up to a change of
variables, the $\Gal(L/K)$-action on $Y$ comes from descending
the \emph{purely arithmetic} $\Gal(L/K)$-action on $\proj^{1, \an}_L
\setminus \mc{L}$.\footnote{In the proof of $(a) \implies (c)$ in
  Proposition~\ref{Pdescentcriterion}, the fact that $\sigma(\beta) =
  \beta$ shows that a section of the top exact sequence can be given explicitly
  by $\sigma \mapsto (1, \sigma) \in N^{sl}(\Gamma)$ as in
  the previous footnote.}   The statement that this action descends is
precisely part $(b)$ of Proposition~\ref{Pdescentcriterion}.
\end{remark}

\begin{remark}
A Mumford curve over $L$ descends to a Mumford curve over $K$
if and only if its Schottky group is defined over $K$ (after possibly
changing a parameter).  Our examples in \S\ref{Sexample} are Mumford
curves that descend to curves over $K$, but not to Mumford curves.
\end{remark}

\begin{remark}\label{RKforms}

Proposition~\ref{Pdescentcriterion} can be applied to study curves that are isomorphic to Mumford curves after base change, as follows.
Let $\Gamma \subset \PGL_2(L)$ be a Schottky group satisfying all the conditions of Proposition \ref{Pdescentcriterion}, let $Y$ be the associated Mumford curve over $L$, and let $X$ be a curve over $K$ such that $X_L \cong Y$.
Then, the Galois cohomology set $H^1(\Gal(L/K), \Aut(X_L))$ classifies
all isomorphy classes of $K$-curves that become isomorphic to $Y$
after base change to $L$, also called $K$-\emph{forms} of $Y$.
In general, it is not easy to compute $H^1(\Gal(L/K), \Aut(X_L))$, but one can use the uniformization property to find some of its elements, as follows.
For every element $\gamma \in PGL_2(L)$, the group $\Gamma' := \gamma^{-1}\Gamma \gamma$ is again a Schottky group.
If $\Gamma'$ satisfies the condition $\sigma^{-1}\Gamma'\sigma = \Gamma'$, then the arithmetic action of $\Gal(L/K)$ on $\P^{1}_L$ descends to an action on $Y$, as in the proof of part $(b) \implies (a)$ of Proposition \ref{Pdescentcriterion}.
As such, it defines a curve $X'$ over $K$, which is not isomorphic to $X$, in general.
More in detail, the automorphism $\gamma$ induces a cocycle $\xi_\gamma \in Z^1(\Gal(L/K), PGL_2(L))$ via the formula $\xi_\gamma(\sigma) = \gamma^\sigma \gamma^{-1}$.
Since we have that 
\[\gamma^\sigma \gamma^{-1} \Gamma (\gamma^\sigma \gamma^{-1})^{-1} = \gamma^\sigma \Gamma' {(\gamma^\sigma)}^{-1} = \Gamma,\]
using the fact that both $\Gamma$ and $\Gamma'$ are $\Gal(L/K)$-equivariant, the automorphism $\xi_\gamma(\sigma)$ normalizes $\Gamma$ and descends to an automorphism $\xi(\sigma)$ of $X_L$.
This process generates a cocycle $\xi \in Z^1(\Gal(L/K), \Aut(X_L))$, whose cohomology class $[\xi] \in H^1(\Gal(L/K), \Aut(X_L))$ can be nontrivial, even though $\xi_\gamma$ is necessarily cohomologous to the trivial cocycle by Hilbert's Theorem 90.
The forms of a Mumford curve arising in this way are said to be \emph{arithmetic}. 
Examples of nontrivial arithmetic forms of Mumford curves will be
given in detail in \S\ref{Sexample}. We note that not all $K$-forms of
Mumford curves are arithmetic.  For example, non-trivial principal homogeneous spaces for Tate curves (see e.g. \cite[\S X.3]{Silverman16}) are twists of Tate curves but correspond to cocycles that do not lift to $Z^1(\Gal(L/K), PGL_2(L))$.
\end{remark}

\subsection{Fields of definition of type 2 points}\label{Sfieldofdefinition}

\begin{defn}\label{def:defined}
Let $L/K$ be a finite Galois extension. Let us fix a parameter on $\proj^{1, \mathrm{an}}_L$, so that the natural arithmetic action of $\Gal(L/K)$ is well defined.
If $x$ is a point of $\proj^{1, \mathrm{an}}_L$ of type 2, we say that
$x$ is \emph{defined over} $K$ if it is of the form $\eta_{a,r}$ for
$a\in K$ and $r\in |K^\times|$ (see Remark~\ref{Dc}).
We can extend this definition to Mumford curves: if $Y$ is a Mumford
curve over $L$, and we denote by $p \colon \proj^{1,\mathrm{an}}_L
\setminus \mc{L} \to Y^{\mathrm{an}}$ the universal cover coming from
Schottky uniformization, a point $y\in Y^{\mathrm{an}}$ of type 2 is
\emph{defined over} $K$ if there exists a point in the pre-image $p^{-1}(y)$ that is defined over $K$.
Note that this notion depends on the choice of a Schottky group associated with $Y$.
\end{defn}

\begin{remark}\label{Rmultiplicity1}
A point of $Y$ defined over any intermediate extension $M$ of $L/K$ has
multiplicity 1. 
It is sufficient to prove this for a type 2 point $x$ of $\proj^{1,\an}_L$, as the projection $p$ is a local isomorphism and the multiplicity of a point only depends on the analytic structure.
To do this, consider the special fiber of the model of $\proj^1_M$
corresponding to the singleton $\{x\}$.
It consists of a unique component $\mc{X}_k$, whose closed points are smooth if, and only if, their formal fibers are Berkovich discs centered in $M$-rational points with $M$-rational radius. 
Since this is the case when $x$ is defined over $M$, we have that the curve $\mc{X}_k$ is reduced, which is equivalent to the fact that $x$ has multiplicity 1.
\end{remark}


Let $X$ be an arithmetic $K$-form of a Mumford curve $Y$ defined over
$L$ as in Remark~\ref{RKforms}.
Then, there is a Schottky group $\Gamma$ associated with $X_L$ such
that the action of $\Gal(L/K)$ on $X_L^{\mathrm{an}}$ inherited from
the base change is compatible with the arithmetic action on $\proj^{1,\mathrm{an}}_L$ via the uniformization map $p\colon \proj^{1,\mathrm{an}}_L \setminus \mc{L} \to X_L^{\mathrm{an}}$.
For the rest of this section, we deal only with arithmetic forms and we fix a $\Gamma$ as above. In this way, we can write about points defined over subfields of $L$ (in the sense of Definition \ref{def:defined}) without ambiguity.

\begin{lemma}\label{lem:smoothdescent}
Let $X$ be an arithmetic $K$-form of a Mumford $L$-curve, and let $x \in X_L^{\mathrm{an}}$ be a point of type 2 defined over a field $M$ with $K\subseteq M \subseteq L$.
Let $\mc{X}$ be a formal model of $X_L^{\mathrm{an}}$ on which the
action of $\Gal(L/M)$ is well-defined, and whose associated set of type 2 points contains $x$. Denote by $\ol{V}$ the component of $\mc{X}_k$ corresponding to $x$ and by $P$ a smooth point of $\ol{V}$.
Then, the Galois group $\Gal(L/M)$ acting on $\mc{X}$ fixes $\ol{V}$ pointwise, the image of $P$ in the quotient model $\mc{X} / \Gal(L/M)$ is a smooth point, and the image of $\ol{V}$ has multiplicity one.
\end{lemma}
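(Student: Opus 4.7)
My plan is to pull the situation back to the Schottky universal cover $\mf{D} \subset \panL$, where the Galois action is the explicit arithmetic one, and then to use the local isomorphism of formal models (Lemma~\ref{lem:formallift}) together with Lemma~\ref{Lsamelocalring} to transfer the conclusions back to $\mc{X}$.

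First, since $x$ is defined over $M$, one may choose a preimage $\tilde{x}$ of $x$ under $p \colon \mf{D} \to X_L^{\an}$ of the form $\eta_{a,r}$ with $a \in M$ and $r \in |M^{\times}|$. Picking $c \in M^{\times}$ with $|c| = r$ and performing the $M$-rational change of coordinates $y = (z-a)/c$ turns $\tilde{x}$ into the Gauss point of $L(y)$. The corresponding component $\ol{V}'$ of the $\tilde{x}$-model of $\P^1_{\mc{O}_L}$ is $\P^1_k$ with coordinate $\bar{y}$. The arithmetic action of $\Gal(L/M)$ on $\panL$ fixes $y$ and induces the identity on $\mc{O}_L/(\pi_L) = k$, hence fixes $\tilde{x}$ and acts trivially on $\ol{V}'$. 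Because $X$ is an arithmetic $K$-form, the $\Gal(L/M)$-action on $X_L^{\an}$ is compatible with the arithmetic action on $\panL$ via $p$; let $\mc{D}$ be the formal model of $\mf{D}$ associated to $p^{-1}(S)$, where $S$ is the (necessarily $\Gal(L/M)$-invariant) set of type 2 points of $\mc{X}$. By Lemma~\ref{lem:formallift} the map $p$ lifts to a $\Gal(L/M)$-equivariant local isomorphism $\varpi \colon \mc{D} \to \widehat{\mc{X}}$ that identifies $\ol{V}'$ birationally with $\ol{V}$, so triviality on $\ol{V}'$ implies triviality on $\ol{V}$.

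For the local analysis at $P$, since $P$ is a smooth point of $\ol{V}$ it lies on no other component of $\mc{X}$, and a lift $\tilde{P} \in \ol{V}'$ lies on no other component of $\mc{D}$. Combining the local isomorphism $\varpi$ with Lemma~\ref{Lsamelocalring} (applied to $\mc{D}$ and the $\tilde{x}$-model $\P^1_{\mc{O}_L}$) yields a $\Gal(L/M)$-equivariant isomorphism
\[ \widehat{\mc{O}}_{\mc{X}, P} \;\cong\; \mc{O}_L[[y - \tilde{\alpha}]], \]
where $\tilde{\alpha} \in \mc{O}_M$ is any lift of the residue of $\tilde{P}$ on $\ol{V}' \cong \P^1_k$ (such a lift exists since $\mc{O}_M$ has residue field $k$). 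The Galois action on the right-hand side is the standard action on the $\mc{O}_L$-coefficients of the power series, whose invariant subring is $\mc{O}_M[[y - \tilde{\alpha}]]$, a regular local ring of dimension $2$. Identifying this invariant subring with the completed local ring of $\mc{X}/\Gal(L/M)$ at the image of $P$ shows that the image of $P$ is regular, hence a smooth point of the special fiber of the quotient, and that the image of $\ol{V}$ has multiplicity equal to the order of vanishing of a uniformizer $\pi_M$ of $\mc{O}_M$ in $\mc{O}_M[[y - \tilde{\alpha}]]$, which equals $1$.

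The main technical obstacle is verifying the $\Gal(L/M)$-equivariance of the successive identifications of complete local rings, and confirming that completion commutes with taking invariants so that the invariant subring actually represents the completed local ring of the quotient scheme at the image of $P$. Both points require careful tracking of the arithmetic action through the uniformization $p$, the formal lift $\varpi$, and the change of coordinate from $z$ to $y$.
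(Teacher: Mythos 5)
Your proposal is correct and follows essentially the same route as the paper: lift $x$ to a preimage $\eta_{a,r}$ with $a\in M$, $r\in|M^\times|$ in the Schottky cover, use that $L/M$ is totally ramified (so the arithmetic $\Gal(L/M)$-action is trivial on the residue field and hence on $\ol{V}$), and transfer back through the uniformization to get smoothness and multiplicity one of the images. The only difference is cosmetic: where the paper invokes ``full inertia'' on $\scrH(y)$ and Remark~\ref{Rmultiplicity1}, you make the same point by explicitly computing the invariant ring $\mc{O}_M[[y-\tilde\alpha]]\subset\mc{O}_L[[y-\tilde\alpha]]$, which is a fine (arguably more transparent) way to finish.
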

\begin{proof}
Let $\pi \colon X^{\mathrm{an}}_L\to X^{\mathrm{an}}_M$ and
$\tilde{\pi} \colon \proj^{1,\mathrm{an}}_L\to\proj^{1,\mathrm{an}}_M$ be the canonical projection maps.
The fact that the point $x$ is defined over $M$ yields the following
two consequences: 

\begin{enumerate}
\item There is a point $y \in p^{-1}(x)$ defined over $M$.  In
  particular, $y$ is fixed by every element of $\Gal(L/M)$;
\item The multiplicity of $\tilde{\pi}(y)$ is equal to 1 (Remark~\ref{Rmultiplicity1}).
\end{enumerate}
Since the action of $\Gal(L/K)$ on $X_L$ descends from the purely arithmetic action on $\proj^{1,\mathrm{an}}_L$, property $(a)$ results in the fact that $\ol{V}$ is fixed by $\Gal(L/M)$.
Moreover, since $L/M$ is totally ramified and $y$ is defined over $M$, the group $\Gal(L/M)$ acts with full inertia on the completed residue field $\scrH(y)$.
As a result, the action of $\Gal(L/M)$ on the component $\ol{V}$ is trivial and the image of the smooth point $P \in \ol{V}$ in the quotient model $\mc{X} / \Gal(L/M)$ is a smooth point.
Finally, property $(b)$ ensures that the image of $\ol{V}$ in the quotient model $\mc{X} / \Gal(L/M)$ has multiplicity 1.
\end{proof}

\begin{lemma}\label{lem:multiplicities}
Let $X$ be an arithmetic $K$-form of a Mumford curve over $L$, let $\pi \colon X_L^{\mathrm{an}} \to
X^{\mathrm{an}}$ be the canonical projection induced by the base change, and let $x \in X_L^{\mathrm{an}}$ be a point of type 2 defined over a field $M$ with
$K \subseteq M \subseteq L$.
Then we have that the multiplicity $m(\pi(x))$ divides the degree $[M:K]$.
\end{lemma}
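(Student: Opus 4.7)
The plan is to invoke Lemma~\ref{lem:smoothdescent} to produce a multiplicity-one component of an $\mc{O}_M$-model of $X_M \colonequals X\times_K M$, and then translate the resulting statement into value groups of valuations on function fields to read off $m(\pi(x))$.

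First, choose a $\Gal(L/K)$-invariant normal formal model $\mc{X}$ of $X_L^{\an}$ whose associated set of type 2 points contains $x$ (e.g.\ enlarge the model corresponding to $\{x\}$ by adjoining its $\Gal(L/K)$-orbit, which is finite). Let $\ol{V}$ be the component of $\mc{X}_k$ corresponding to $x$. By Lemma~\ref{lem:smoothdescent}, $\Gal(L/M)$ fixes $\ol{V}$ pointwise, and the image $\ol{V}_M$ of $\ol{V}$ in the quotient $\mc{X}_M \colonequals \mc{X}/\Gal(L/M)$, which is a model of $X_M$ over $\mc{O}_M$, has multiplicity one. Since $\Gal(L/M)\subseteq\Gal(L/K)$, the canonical map $\mc{X}\to\mc{X}_K\colonequals\mc{X}/\Gal(L/K)$ factors through $\mc{X}_M$, so the image of $\ol{V}_M$ in $\mc{X}_K$ is precisely the component corresponding to $\pi(x)$.

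To finish, switch to valuations. Let $w_M$ be the valuation on $M(X_M)$ corresponding to $\ol{V}_M$, normalized so that $w_M|_M = v_M$; the multiplicity-one statement above is exactly $w_M(M(X_M)^\times)=\ints$. Let $w_K$ be the valuation on $K(X)$ attached to $\pi(x)$, normalized so that $w_K|_K=v_K$. Since $K(X)\subseteq M(X_M)$ and $v_M|_K = [M:K]\cdot v_K$ (because the algebraic closedness of $k$ forces $M/K$ to be totally ramified), we obtain $w_K = w_M|_{K(X)}/[M:K]$, and hence
\[
w_K(K(X)^\times) \;=\; \frac{w_M(K(X)^\times)}{[M:K]} \;\subseteq\; \frac{w_M(M(X_M)^\times)}{[M:K]} \;=\; \frac{1}{[M:K]}\ints.
\]
By formula~(\ref{Evaldef}), $m(\pi(x))$ is the denominator (in lowest terms) of the value group $w_K(K(X)^\times)$, so it divides $[M:K]$.

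The only step requiring care is the identification of $\mc{X}_M$ as a model of $X_M$ over $\mc{O}_M$ with $\ol{V}_M$ matching the valuation $w_M$; this is essentially formal from Galois descent for the base change $X_L = X_M\times_M L$ together with Lemma~\ref{lem:smoothdescent}, but one should check that the quotient is taken in the appropriate category (normal formal schemes, or algebraic models via Proposition~\ref{prop:models}) so that components and their multiplicities translate as expected.
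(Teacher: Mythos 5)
Your proof is correct and follows essentially the same route as the paper: the key input in both is Lemma~\ref{lem:smoothdescent}, which yields the multiplicity-one statement over $M$ (equivalently, that $\pi'(x)\in X_M^{\mathrm{an}}$ has multiplicity $1$), followed by a value-group comparison using that $L/K$ is totally ramified. The paper compresses your second half by citing Ducros's formula $m(y)=\big[|\scrH(y)^\times|:|K^\times|\big]$ and working with completed residue fields on the analytic side, which sidesteps the Galois-stable model and the quotient-model bookkeeping you flag as needing care.
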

\begin{proof}
We have that the multiplicity of a type 2 point $y$ in a $K$-analytic curve is given by the index $\big[|\mathscr H(y)^\times|:|K^\times|\big]$ \cite[Proposition 6.5.2 (1)]{Ducros14}.
Consider now the projection morphism $\pi' \colon~X_L^\mathrm{an} \to X_M^{\mathrm{an}}$. 
Since $x$ is defined over $M$, we can apply Lemma
\ref{lem:smoothdescent} to the model corresponding to the set $\{x\}$ to show that the point $\pi'(x)\in X_M^\mathrm{an}$ has multiplicity 1,
that is to say $|\mathscr H(\pi'(x))^\times|=|M^\times|$, and hence $|\mathscr H(\pi(x))^\times| \subset |M^\times|$. This implies that \[m(\pi(x))=\big[|\mathscr H(\pi(x))^\times|:|K^\times|\big] = \frac{\big[|M^\times|:|K^\times|\big]}{\big[|M^\times|:|\mathscr H(\pi(x))^\times|\big]},\]
which proves the statement.
\end{proof}

\section{Proof of Theorem~\ref{Tnegative}}\label{Sexample}

\subsection{Generalities}\label{Sgeneralities}
In this subsection, we set up our examples for
Theorem~\ref{Tnegative}.  Our notation is as follows:
$L/K$ is a finite Galois extension, and $\Gamma \subseteq PGL_2(L)$ is
a Schottky group of rank $g \geq 2$ with generators $\gamma_1, \ldots,
\gamma_g$ and associated Schottky figure $$(D^{\pm}(\gamma_1),
D^{\pm}(\gamma_1^{-1}), \ldots, D^{\pm}(\gamma_g),
D^{\pm}(\gamma_g^{-1}))$$ (Definition~\ref{def:Schottkyfigure}).  
Write $\mf{D} = \proj^{1,\an}_L \setminus \mc{L}$ where $\mc{L}$
is the limit set of $\Gamma$, let $Y^{\an} = \Gamma \backslash \mf{D}$, and let $Y$ be the
corresponding projective curve over $L$, which is the genus $g$
Mumford curve corresponding to $\Gamma$.  We assume that $\Gamma$
satisfies part $(b)$ of Proposition \ref{Pdescentcriterion}, which by part $(a)$ of the
same proposition shows that $Y$ descends to a $K$-curve $X$.  The
curve $X$ is an arithmetic $K$-form of $Y$, in the sense of \S\ref{Sdescent}.

Let $\mc{Y}^{\stab}$ be the stable model of $Y$, and let $\mc{Y}'$
be the semi-stable model of $Y$ associated to the set $S_Y$ of
multiplicity 1 points in the skeleton $\Sigma_Y$ of $Y^{\an}$ as in
Proposition~\ref{prop:mult1}.  Recall that the set of irreducible components
of the special fiber of $\mc{Y}'$ is in natural one-to-one
correspondence with $S_Y$.

Let $\varpi' \colon \mc{D}' \to \hat{\mc{Y}}'$ be the morphism of formal
schemes coming from the uniformization map $\mf{D} \to Y^{\an}$ as in Lemma~\ref{lem:formallift}.
Recall that $\mc{D}'$ is a formal model of
$\mf{D}$.  We write $\Sigma_{\mf{D}}$ for the skeleton of $\mf{D}$
(Theorem~\ref{thm:unif}), and we write $S_{\mf{D}}$ for the set of
multiplicity $1$ points on $\Sigma_{\mf{D}}$.  

Let $F \subseteq \mf{D}$ be a
fundamental domain for $\Gamma$ associated to the Schottky figure
above, as in Notation~\ref{NSchottky}.  Write $\Sigma_{F}$ for its skeleton, and
write $S_F \subseteq S_{\mf{D}}$ for the set of multiplicity $1$ points on $\Sigma_F$.
The group $\Gamma$ acts on $S_{\mf{D}}$.
By the definition of a fundamental domain, no two
elements of $S_{F}$ lie in a common $\Gamma$-orbit of $S_{\mf{D}}$.
So the map $S_{F} \to S_Y := \Gamma \backslash
S_{\mf{D}}$ induced by
$S_{F} \hookrightarrow S_{\mf{D}} \to S_Y$ is injective, and Theorem~\ref{thm:unif} shows it is in fact bijective.

The group $\Gal(L/K)$ acts on $\Sigma_Y$ via descent of its action on $\Sigma_{\mf{D}}$, and
thus it acts on $\mc{Y}'$.  We pick a semi-stable blow-down
$\mc{Y}''$ of $\mc{Y}'$;
in the examples,
$\mc{Y}''$ will be chosen strategically.  We make the further
assumption that the action of $\Gal(L/K)$ on $Y$ extends to
$\mc{Y}''$.  Let $\mc{D}'' \to \hat{\mc{Y}}''$ be the morphism of
formal schemes coming from the uniformization map $\mf{D} \to Y^{\rm an}$
as in Lemma~\ref{lem:formallift}.
The quotient $\mc{Y}'' / \Gal(L/K)$ 
is a normal model $\mc{X}$ of $X$ with special fiber
$\mc{X}_k$.

\begin{lemma}\label{LtransfertoY}
  Let $x \in \mc{X}_k \subseteq \mc{X}$, let $y$ be a preimage of $x$ under
  $\mc{Y}'' \to \mc{X}$, and let $\tilde{y}$ be the unique preimage of
  $y$ under $\mc{D}'' \to \hat{\mc{Y}}'' \to \mc{Y}''$ lying in the
  reduction $\ol{F}$ of the fundamental domain $F$.
Let $H \subseteq \Gal(L/K)$ be any subgroup such that
  $H\tilde{y} \subseteq \ol{F}$. 
Write $\mc{Z} = \mc{D}'' / H$ using the standard $\Gal(L/K)$-action, let $z$ be the image of
$\tilde{y}$ in $\mc{Z}$, and let $y^H$ be the image of $y$ in
$\mc{Y}''/H$, as in the diagram below.

   $$
   \xymatrixrowsep{0.8cm}
   \xymatrixcolsep{2cm}
    \xymatrix{
   \ \tilde{y}  \in \mc{D}'' \ar[d]_{\Gamma} \ar[rr]^{H} & &
   \mc{Z} \ni z \\
     y \in \hat{\mc{Y}}'' \ar[r] & \mc{Y}'' \ar[r]^-{H} & \mc{Y}'' / H \ni y^H
    }
    $$
Then $\hat{\mc{O}}_{\mc{Y}''/H,\, y^H} \cong
\hat{\mc{O}}_{\mc{Z}, z}$.
In particular, if $\Gal(L/K) \tilde{y} \subseteq \ol{F}$, then taking
$H = \Gal(L/K)$ gives
$\hat{\mc{O}}_{\mc{X}, x} \cong \hat{\mc{O}}_{\mc{Z}, z}$.

%
\end{lemma}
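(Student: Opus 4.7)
The plan is to combine the local isomorphism provided by Proposition~\ref{prop:semistable} with the fundamental domain hypothesis $H\tilde{y}\subseteq \ol{F}$ in order to transport the $H$-action between $\mc{D}''$ and $\mc{Y}''$, and then to descend the resulting equivariant identification to the quotients by $H$ on each side.

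First, I would apply Proposition~\ref{prop:semistable} to the morphism $\mc{D}''\to \hat{\mc{Y}}''$ of Lemma~\ref{lem:formallift}, obtaining a canonical isomorphism of completed local rings $\widehat{\mc{O}}_{\mc{D}'', \tilde{y}} \cong \widehat{\mc{O}}_{\mc{Y}'', y}$. Next, I would check that this isomorphism is equivariant with respect to the stabilizers $H_{\tilde{y}}\subseteq H$ and $H_y\subseteq H$, and that these two stabilizers coincide. The main point is that in general the $\Gal(L/K)$-action on $\mc{Y}''$ is obtained from the semi-linear arithmetic action on $\mc{D}''$ by modding out $\Gamma$, so matching the two actions pointwise requires a $\Gamma$-correction. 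Under the hypothesis $H\tilde{y}\subseteq \ol{F}$, however, each $h\tilde{y}$ already lies in a fundamental domain for $\Gamma$, so no $\Gamma$-correction is needed: the uniformization map restricts to an $H$-equivariant bijection between $H\tilde{y}$ and $Hy$, whence $H_{\tilde{y}}=H_y$, and the isomorphism provided by Proposition~\ref{prop:semistable} intertwines the two $H_{\tilde{y}}$-actions on the completed local rings.

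To pass from this equivariant local identification to an isomorphism of completed local rings on the quotients, I would invoke the standard formula $\widehat{\mc{O}}_{W/H, w^H} \cong \widehat{\mc{O}}_{W,w}^{H_w}$ for a finite group action on a (formal) scheme, applied in turn to $W=\mc{D}''$ and $W=\mc{Y}''$. Combining this with the $H_{\tilde{y}}=H_y$-equivariant isomorphism of the previous step yields the desired $\widehat{\mc{O}}_{\mc{Y}''/H, y^H} \cong \widehat{\mc{O}}_{\mc{Z}, z}$, and the case $H=\Gal(L/K)$ then follows by specialization. An essentially equivalent geometric variant would be to choose a small formal affine neighborhood $U$ of $\tilde{y}$ on which $\mc{D}''\to\hat{\mc{Y}}''$ restricts to an isomorphism onto some $V$, take $H$-saturations on both sides, and use the equivariance above to get an $H$-equivariant isomorphism $\bigcup_{h\in H} hU \cong \bigcup_{h\in H} hV$ of formal neighborhoods of $H\tilde{y}$ and $Hy$, which quotients down.

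The hard part will be the bookkeeping around $H$-equivariance: one has to be precise about what the $\Gal(L/K)$-action on $\mc{D}''$ really is (the arithmetic semi-linear action on $\proj^{1,\an}_L$ extended to the formal model via Proposition~\ref{prop:extensionofliftings}) and explain why the fundamental-domain hypothesis kills the $\Gamma$-twist that would otherwise obstruct the equivariance of $\varpi'$ near the orbit of $\tilde{y}$. Once that is unraveled, the remainder of the argument is formal and relies only on Proposition~\ref{prop:semistable} and elementary properties of quotients by finite groups.
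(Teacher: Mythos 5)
Your proposal is correct and follows essentially the same route as the paper's proof: apply Proposition~\ref{prop:semistable} to identify $\widehat{\mc{O}}_{\mc{D}'',\tilde{y}}$ with $\widehat{\mc{O}}_{\mc{Y}'',y}$, use the hypothesis $H\tilde{y}\subseteq\ol{F}$ together with the fact that $F$ maps isomorphically onto its image to see that the identification (on a formal neighborhood of the $H$-orbit) is $H$-equivariant with no $\Gamma$-twist, and then pass to the $H$-quotients on both sides. Your explicit invocation of the stabilizer equality $H_{\tilde{y}}=H_y$ and the invariants formula for quotient local rings just spells out the step the paper compresses into ``the lemma follows.''
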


\begin{proof}
By Proposition~\ref{prop:semistable}, $\hat{\mc{O}}_{\mc{Y}'',
  \tilde{y}} \cong \hat{\mc{O}}_{\mc{D}'', \tilde{y}} $.  The $\Gal(L/K)$-action on $\mc{Y}''$ comes from the
$\Gal(L/K)$-action on $\mc{D}''$ by descent, thus the same is true for
the $H$-actions.   By assumption, the $H$-orbit of $\tilde{y}$ lies in the reduction $\ol{F}$ of $F$.  Since $F \to Y^{\an}$ is
an isomorphism onto its image, the formal
neighborhood $U$ of the $H$-orbit of $\tilde{y}$ maps isomorphically onto its image in
$\mc{Y}''$, and this map respects the $H$-action.  The lemma follows.
\end{proof}

\begin{prop}\label{Pmult1to9}
  Let $x \in \mc{X}_k$, and maintain the notation of Lemma~\ref{LtransfertoY}.  Assume the
  $\Gal(L/K)$-orbit of $\tilde{y}$ lies in $\ol{F}$.  Let $V$ be
  an irreducible component of $\mc{X}_k$ containing $x$, let
  $V''$ be the irreducible component of $\mc{D}''$ in the
  preimage of $V$ lying in $\ol{F}$, and let
  $W$ be the image of $V''$ in $\mc{Z}$.  Then $m_V =
  m_W$.
\end{prop}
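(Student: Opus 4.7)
The plan is to reduce the equality $m_V = m_W$ to the $\mc{O}_K$-algebra isomorphism of completed local rings provided by Lemma~\ref{LtransfertoY}, together with an identification of $V$ and $W$ under it. First I would apply Lemma~\ref{LtransfertoY} with $H=\Gal(L/K)$—permitted because the hypothesis gives $\Gal(L/K)\tilde{y} \subseteq \ol{F}$—to produce an $\mc{O}_K$-algebra isomorphism $\alpha\colon \hat{\mc{O}}_{\mc{X},x} \xrightarrow{\sim} \hat{\mc{O}}_{\mc{Z},z}$.

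Next, I would unwind the construction of $\alpha$ to check that it sends the minimal prime of $\hat{\mc{O}}_{\mc{X},x}$ cutting out $V$ to the minimal prime of $\hat{\mc{O}}_{\mc{Z},z}$ cutting out $W$. The proof of Lemma~\ref{LtransfertoY} proceeds via a $\Gal(L/K)$-equivariant identification (built from $\varpi''$, which is a local isomorphism by Lemma~\ref{lem:formallift}) of a formal neighborhood of $\Gal(L/K)\tilde{y}$ in $\mc{D}''$ with a formal neighborhood of $y$ in $\mc{Y}''$, followed by taking $\Gal(L/K)$-quotients on both sides. Under $\varpi''$, the component $V''$ through $\tilde{y}$ is matched with the unique component of $\mc{Y}''$ through $y$ that maps to $V$ in $\mc{X}$; then the quotient collapses $V''$ to $W$ in $\mc{Z}$ and its $\mc{Y}''$-counterpart to $V$ in $\mc{X}$. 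This gives the required pairing of minimal primes.

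Finally, I would invoke the fact that the multiplicity of an irreducible component in the special fiber of a flat $\mc{O}_K$-model is a local invariant at any point of the component: since $\mc{X}$ and $\mc{Z}$ are normal (being quotients of normal schemes by a finite group), $m_V$ equals the order of vanishing of $\pi_K$ along the minimal prime cutting out $V$, a quantity determined entirely by $\hat{\mc{O}}_{\mc{X},x}$ as an $\mc{O}_K$-algebra together with the choice of that prime. Since $\alpha$ preserves both the $\mc{O}_K$-algebra structure and this choice of prime, we conclude $m_V = m_W$. The main obstacle I anticipate is the second step—tracking components through the equivariant identification—since this is where the hypothesis $\Gal(L/K)\tilde{y} \subseteq \ol{F}$, and the careful definition of $V''$ as the preimage of $V$ that \emph{lies in} $\ol{F}$, genuinely come into play.
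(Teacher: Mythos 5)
Your proposal is correct and follows essentially the same route as the paper: the paper's proof likewise applies Lemma~\ref{LtransfertoY} with $H=\Gal(L/K)$ to get $\hat{\mc{O}}_{\mc{X},x}\cong\hat{\mc{O}}_{\mc{Z},z}$ and then invokes the fact that complete local rings detect the multiplicities of the components through $x$ and $z$. Your middle step---tracking the minimal primes of $V$ and $W$ through the $\Gal(L/K)$-equivariant identification coming from $\varpi''$---is exactly the point the paper leaves implicit in the phrase ``since $x\in V$ and $z\in W$,'' so making it explicit is a sound (and slightly more careful) rendering of the same argument.
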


\begin{proof}
If $z$ is as in Lemma~\ref{LtransfertoY}, then by that lemma,
$\hat{\mc{O}}_{\mc{X}, x} \cong \hat{\mc{O}}_{\mc{Z},z}$. 
Since $x \in V$ and $z \in W$, and the complete local rings of $x$ and
$z$ detect the multiplicities of the irreducible components they lie
on, we conclude that $m_V = m_W$.
\end{proof}

Let $\mc{X}^{\reg} \to \mc{X}$ be the minimal regular snc-resolution of
$\mc{X}$.  Write $\mc{X}^{\reg}_k$ for the special fiber of $\mc{X}^{\reg}$.

\begin{prop}\label{Pexceptionalmultiplicity1}
  Let $x \in \mc{X}_k$, and maintain the notation of Lemma~\ref{LtransfertoY}.  Assume the
  $\Gal(L/K)$-orbit of $\tilde{y}$ lies in $\ol{F}$.  Let $d$ be the
  $\lcm$ of the multiplicities of the irreducible components of
  $\mc{X}_k$ passing through $x$.  If $E$
  is a principal component of $\mc{X}^{\reg}_k$ that lies on the
  exceptional divisor of $\mc{X}^{\reg} \to \mc{X}$ and whose
  image is $x$, then
  $m_E$ divides $d$.
\end{prop}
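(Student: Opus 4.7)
The plan is to reduce to Proposition~\ref{Pmaclaneresolution} by tracking the situation through a chain of local-ring isomorphisms that move the problem from $\mc{X}$ to a normal model of $\mathbb{P}^1_K$, where Mac Lane valuation theory applies directly.

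First, I would apply Lemma~\ref{LtransfertoY} with $H=\Gal(L/K)$ to obtain $\widehat{\mc{O}}_{\mc{X},x}\cong\widehat{\mc{O}}_{\mc{Z},z}$, where $\mc{Z}=\mc{D}''/\Gal(L/K)$ and $z$ is the image of $\tilde{y}$. Since the minimal regular snc-resolution above a closed point is determined by the completed local ring at that point, this isomorphism identifies the exceptional fiber of $\mc{X}^{\reg}\to\mc{X}$ above $x$, together with the multiplicities of its components, with the corresponding exceptional fiber of the minimal regular resolution of $\mc{Z}$ above $z$.

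Next, I would arrange matters so that Lemma~\ref{Lsamelocalring} applies. The generic fiber of $\mc{D}''$ is $\mf{D}=\mathbb{P}^{1,\an}_L\setminus\calL$, and by the final assertion of Proposition~\ref{Pdescentcriterion} the arithmetic $\Gal(L/K)$-action preserves $\calL$ setwise. Hence $\mc{Z}$ is a formal scheme whose generic fiber is $\mathbb{P}^{1,\an}_K\setminus\mf{E}$, where $\mf{E}$ is the image of $\calL$ under the quotient map, consisting of type $1$ points. Let $\Sigma$ be the set of irreducible components of $\mc{Z}$ passing through $z$, let $T$ be the corresponding set of Mac Lane valuations on $K(x)$ (after a suitable choice of parameter), and let $\mc{Y}_T$ be the associated normal model of $\mathbb{P}^1_K$. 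Lemma~\ref{Lsamelocalring} then yields $\widehat{\mc{O}}_{\mc{Z},z}\cong\widehat{\mc{O}}_{\mc{Y}_T,\phi(z)}$. Because the completed local ring records both the components through the point and their multiplicities, every component of $\mc{Y}_T$ passes through $\phi(z)$, and the lcm of their multiplicities equals $d$.

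Combining these isomorphisms, the exceptional divisor of the minimal regular resolution of $\mc{Y}_T$ above $\phi(z)$ is identified, multiplicities included, with the exceptional divisor of $\mc{X}^{\reg}\to\mc{X}$ above $x$. By Proposition~\ref{Pmaclaneresolution}, the stability index of the minimal regular resolution of $\mc{Y}_T$ divides $d$, so every principal component of its special fiber (in particular every principal exceptional component above $\phi(z)$) has multiplicity dividing $d$. This forces $m_E\mid d$, as desired. The one delicate point I expect to need care is verifying that the formal quotient $\mc{Z}=\mc{D}''/\Gal(L/K)$ genuinely qualifies as a formal model with the claimed generic fiber, so that the hypotheses of Lemma~\ref{Lsamelocalring} are met in exactly the required form; once that is in hand, the argument becomes a direct chase through local-ring isomorphisms.
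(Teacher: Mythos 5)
Your proposal is correct and follows essentially the same route as the paper: transfer the problem via Lemma~\ref{LtransfertoY} to the quotient $\mc{Z}=\mc{D}''/\Gal(L/K)$, pass to the normal model of $\proj^1_K$ given by the Mac Lane valuations of the components through $z$ via Lemma~\ref{Lsamelocalring}, and conclude with Proposition~\ref{Pmaclaneresolution}. The only cosmetic difference is that you re-derive the multiplicity transfer inline (the paper cites Proposition~\ref{Pmult1to9}, whose proof is exactly your observation that completed local rings detect the multiplicities of the components through the point), and you make explicit the hypothesis check on the generic fiber of $\mc{Z}$ that the paper leaves implicit.
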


\begin{proof}
By Lemma~\ref{LtransfertoY}, $\hat{\mc{O}}_{\mc{X}, x} \cong
\hat{\mc{O}}_{\mc{Z}, z}$.   By Proposition~\ref{Pmult1to9}, $d$ is the $\lcm$ of the
 multiplicities of the irreducible components of the special fiber of $\mc{Z}$ passing
 through $z$.  Let $S$ be the
set of Mac Lane valuations corresponding to these
components, and let $\mc{Z}_S$ be the normal model of $\proj^1_K$
corresponding to $S$.  By Remark~\ref{Lsamelocalring}, we have $\hat{\mc{O}}_{\mc{Z}, z}
\cong \hat{\mc{O}}_{\mc{Z}_S, \phi(z)}$, where $\phi$ is the injection in
Remark~\ref{Lsamelocalring}.  So $d$ is also the $\lcm$ of the
multiplicities of the irreducible components of the special fiber of
$\mc{Z}_S$ passing through $\phi(z)$.
Thus it suffices
to show that any principal component of the exceptional divisor above
$\phi(z)$ on the minimal regular resolution of $\mc{Z}_S$ has multiplicity
dividing $d$.  This follows from
Proposition~\ref{Pmaclaneresolution}.
\end{proof}

\begin{prop}\label{Pexceptionalmultiplicity2}
  Let $x \in \mc{X}_k$, and maintain the notation of
  Proposition~\ref{Pmult1to9}.  Assume that $x$ lies on only one
  irreducible component $V$ of $\mc{X}_k$, and that the
  corresponding type 2 point
  is defined over a field
  $M$ such that $K \subseteq M \subseteq L$ and $\Gal(M/K) \cong
  \ints/p$.  Assume further that $\Gal(M/K)$ acts nontrivially on the
  preimage of $V$ in $\mc{Y}''/\Gal(L/M)$.  If $E$
  is a principal component of $\mc{X}^{\reg}_k$ that lies on the
  exceptional divisor of $\mc{X}^{\reg} \to \mc{X}$ and whose
  image is $x$, then
  $m_E$ divides $p$.
\end{prop}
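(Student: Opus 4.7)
My plan is to combine Proposition~\ref{Pexceptionalmultiplicity1} with Lemma~\ref{lem:multiplicities}. Since by hypothesis $x$ lies on a unique component $V$ of $\mc{X}_k$, the integer $d$ appearing in the statement of Proposition~\ref{Pexceptionalmultiplicity1} collapses to $d = m_V$, so that proposition yields $m_E \mid m_V$. It therefore suffices to show $m_V \mid p$.

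Let $V_L$ be an irreducible component of $\mc{Y}''_k$ mapping to $V$ under the quotient morphism $\mc{Y}'' \to \mc{X}$, and let $x_V \in X_L^{\an}$ be the type 2 point corresponding to $V_L$ via the correspondence of Proposition~\ref{prop:models}. The hypothesis on the corresponding type 2 point means, in the sense of Definition~\ref{def:defined}, that some preimage of $x_V$ in $\mf{D} \subseteq \proj^{1,\an}_L$ is of the form $\eta_{a,r}$ with $a \in M$ and $r \in |M^\times|$. Under the canonical projection $\pi \colon X_L^{\an} \to X^{\an}$, the point $\pi(x_V)$ is the type 2 point of $X^{\an}$ associated with $V \subset \mc{X}_k$ via Proposition~\ref{prop:models}, and by formula~(\ref{Evaldef}) together with Definition~\ref{Dabc}(a) this point has multiplicity exactly $m_V$. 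Applying Lemma~\ref{lem:multiplicities} to $x_V$ then yields $m_V = m(\pi(x_V)) \mid [M:K] = p$, and chaining the divisibilities produces $m_E \mid m_V \mid p$, as required.

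The main point of care is the bookkeeping that identifies the multiplicity of $\pi(x_V) \in X^{\an}$ with the multiplicity $m_V$ of the component $V$ in the algebraic model $\mc{X}$. This amounts to checking that the component-to-type-2-point correspondence of Proposition~\ref{prop:models} commutes with the quotient $\mc{Y}'' \to \mc{X}$ in the expected way, so that the $M$-rationality assumption on the preimage in $\mf{D}$ really translates, via Lemma~\ref{lem:multiplicities}, into a divisibility for $m_V$ rather than for some auxiliary quantity. The supplementary hypothesis that $\Gal(M/K)$ acts nontrivially on the preimage of $V$ in $\mc{Y}''/\Gal(L/M)$ does not appear to enter the divisibility argument itself; my reading is that it serves to guarantee that $M$ is the genuine field of definition of the type 2 point (rather than the point already descending to a proper subfield of $M$), which in turn makes the conclusion $m_E \mid p$ sharp rather than vacuously forced by $m_V = 1$.
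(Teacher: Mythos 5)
Your first step is where the argument breaks down. Proposition~\ref{Pexceptionalmultiplicity1} carries the hypothesis that the $\Gal(L/K)$-orbit of $\tilde{y}$ lies in $\ol{F}$, and Proposition~\ref{Pexceptionalmultiplicity2} only imports the \emph{notation} of Proposition~\ref{Pmult1to9}, not that assumption. Indeed, Proposition~\ref{Pexceptionalmultiplicity2} exists precisely to treat points where the orbit condition is unavailable: in the proof of Proposition~\ref{Pweakwild}, the case $x \in U$ is the one handled by Proposition~\ref{Pexceptionalmultiplicity1}, while for $x$ on $V_1$ (lying under $\eta_{\sqrt{2},2}$, say) the element $\sigma$ moves a point $\tilde{y}$ of the corresponding component of $\ol{F}$ onto the reduction of the component at $\eta_{-\sqrt{2},2}$, which lies outside $\ol{F}$. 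So you cannot invoke Proposition~\ref{Pexceptionalmultiplicity1} to get $m_E \mid m_V$; without the orbit condition, the complete local ring at $x$ is not identified via Lemma~\ref{LtransfertoY} with one on a quotient of a model of $\proj^1$, and the Mac Lane resolution bound is not available. (Your second step, $m_V \mid p$ via Lemma~\ref{lem:multiplicities}, is fine --- it is essentially Lemma~\ref{Lallcomponents} and Lemma~\ref{lem:generalXmult} --- but it only controls the components of $\mc{X}_k$ themselves, not the exceptional components of the resolution.)

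The tell is your own observation that the hypothesis that $\Gal(M/K)$ acts nontrivially on the preimage of $V$ in $\mc{Y}''/\Gal(L/M)$ plays no role in your argument: in the paper's proof it is the decisive input, not a sharpness device. There one sets $H=\Gal(L/M)$, uses Lemma~\ref{lem:smoothdescent} to see that the preimage $y^H$ of $x$ in $\mc{Y}''/H$ is a smooth point fixed by $\Gal(M/K)$, and then --- because the component through $y^H$ has genus $0$ and $\Gal(M/K)$ acts nontrivially on it --- the Hurwitz formula for wildly ramified covers forces the ramification jump of the $\Gal(M/K)\cong\ints/p$ action on the special fiber of $\Spec\bigl(\hat{\mc{O}}_{\mc{Y}''/H,\,y^H}\bigr)$ to equal $1$. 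This exhibits $x$ as a weak wild arithmetic quotient singularity with stabilizer $\ints/p$ in the sense of \cite{ObusWewers}, whose results (Corollary 6.5, Definition 6.6, Corollary 7.13(i) there) then show that the unique principal component of the exceptional divisor of the minimal resolution at $x$ has multiplicity $p$. Without the nontriviality hypothesis the ramification jump could be larger and no bound of the form $m_E \mid p$ would follow, so an argument that never uses this hypothesis cannot establish the statement; the gap in your proposal is genuine and fixing it essentially requires the quotient-singularity analysis above rather than a reduction to Proposition~\ref{Pexceptionalmultiplicity1}.
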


\begin{proof}
Let $H = \Gal(L/M)$.  By Lemma~\ref{lem:smoothdescent}, the preimage
$y^H$ of $x$ in
$\mc{Y}''/H$ is smooth. We may assume $x$
is singular in $\mc{X}$ (otherwise $E$ would not exist).  Since $x$
is the image of the smooth point $y^H$ after taking the quotient by the action
of $\Gal(M/K)$ on $\mc{Y}''/H$, we have that $\Gal(M/K)$ fixes
$y^H$.  Since the irreducible component of the special fiber of
$\mc{Y}''/H$ on which $y^H$ lies has genus $0$, and $\Gal(M/K)$ acts
nontrivially on this component, 
the Hurwitz formula for wildly ramified curves shows that the induced
$\Gal(M/K)$-action on the special fiber of $\Spec (\hat{\mc{O}}_{\mc{Y}''/H,\,  y^H})$ has
ramification jump $1$. 
This means that $x$ is a \emph{weak wild arithmetic quotient
  singularity} with stabilizer $\ints/p$ in the language of
\cite[Definition~3.7]{ObusWewers}.  By \cite[Corollary 6.5, Definition
6.6, and Corollary 7.13(i)]{ObusWewers}, the unique principal
component of the exceptional divisor of the minimal resolution of the
singularity at $x$ has multiplicity $p$, proving the proposition.
\end{proof}

\subsection{An example ($g=2, p=2$)}
Let $k$ be an algebraically closed field of characteristic $2$, and
let $K = \Frac(W(k))$.  Let $L = K[a, b]/(a^2 - 2, b^2 + 1)$.  Write
$\sqrt{2}$ and $i$
for the respective images of $a$ and $b$ in $L$.  The group $\Gal(L/K) \cong \ints/2
\times \ints/2$ is generated by $\sigma$ and $\tau$ acting nontrivially on 
$\sqrt{2}$ and $i$, respectively.  In this subsection, as a specific
example of the setup from \S\ref{Sgeneralities}, we construct a genus 2 curve $X/K$
with potentially multiplicative reduction, a $K$-rational point, and
stabilization index $e(X) = 2$, but for which $L/K$ (of degree 4) is the
minimal extension over which $X$ attains semi-stable reduction.
This proves the $p=2$ case of Theorem~\ref{Tnegative}.

Let $\Gamma$ be the
subgroup of $PGL_2(L)$ generated by
$$\gamma_1 := P A P^{-1}, \ \gamma_2 := QAQ^{-1},$$
where
$$P = \begin{bmatrix} -\sqrt{2} & -\sqrt{2} \\ -1 & 1 \end{bmatrix},\
Q = \begin{bmatrix} -1-i & 1-i \\ -1 & 1 \end{bmatrix},\
A = \begin{bmatrix} 2 & 0 \\ 0 & 1 \end{bmatrix}.
$$

\begin{prop}\label{PSchottky}\hfill
  \begin{enumerate}[\upshape (i)]
 \item  The group $\Gamma$ above is a Schottky group of rank 2, with Schottky figure
  given by
  $$(D^+(\gamma_1), D^+(\gamma_1^{-1}), D^+(\gamma_2),
  D^+(\gamma_2^{-1})) = (\ol{B}(\sqrt{2}, 2), \ol{B}(-\sqrt{2}, 2), \ol{B}(1 + i,
    3/2), \ol{B}(1-i , 3/2)),$$ and corresponding open subdisks
$$(D^-(\gamma_1), D^-(\gamma_1^{-1}), D^-(\gamma_2),
  D^-(\gamma_2^{-1})) =  (B(\sqrt{2}, 2), B(-\sqrt{2}, 2), B(1 + i,
    3/2), B(1-i , 3/2)),$$
    where $B(a, r)$ (resp.\ $\ol{B}(a,r)$) is the open (resp.\ closed)
    disk with center $a$ and radius $|2^r|$.
    \item If $Y^{\rm an} = \Gamma \backslash \mf{D}$ as in \S\ref{Sgeneralities} and $Y$ is the
      corresponding projective curve over $L$, then the standard action of
      $\Gal(L/K)$ on $\mf{D} \subseteq \proj^{1, \rm an}_L$ descends
      to $Y^{\rm an}$, and thus to $Y$.  In particular, $Y$ descends
      to a curve $X := Y/(\Gal(L/K))$ defined over $K$.
    \end{enumerate}
\end{prop}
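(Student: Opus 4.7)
The plan is to verify parts~(i) and~(ii) separately by direct computation, exploiting the matrix factorizations $\gamma_1 = PAP^{-1}$ and $\gamma_2 = QAQ^{-1}$.

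For part~(i), I would first verify pairwise disjointness of the four closed discs. Two closed discs $\ol{B}(a,r)$ and $\ol{B}(b,s)$ are disjoint if and only if $v_K(a-b) < \min(r,s)$. The relevant valuations are $v_K(2\sqrt{2}) = 3/2$ and $v_K(2i) = 1$ (from $(1+i)^2 = 2i$, whence $v_K(1+i) = 1/2$); for the cross pairs I would compute $v_K\big(\sqrt 2 \pm (1 \pm i)\big) = 3/4$ by squaring and observing that $\big(\sqrt 2 \pm (1+i)\big)^2 = 2 + 2i \pm 2\sqrt{2}(1+i)$ has valuation $3/2$, and analogously with $1-i$. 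Each of these differences has valuation strictly less than $\min(2, 3/2) = 3/2$, so all six pairs of discs are disjoint. Next, for each $\gamma \in \{\gamma_1^{\pm 1}, \gamma_2^{\pm 1}\}$ I would verify that $D^-(\gamma)$ is a maximal open subdisc of $D^+(\gamma)$ by transporting the explicit Berkovich dynamics of $A(z)=2z$ on discs (which shifts the radius parameter by~$1$) through the Möbius maps $P$ and $Q$. The centers $\pm\sqrt{2}$ and $1 \pm i$ arise as $P(0), P(\infty), Q(0), Q(\infty)$, which are precisely the fixed points of $\gamma_1$ and $\gamma_2$; a direct radius computation yields the stated disc radii.

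For part~(ii), I would apply Proposition~\ref{Pdescentcriterion} by verifying condition~(c): that for every $\rho \in \Gal(L/K)$, applying $\rho$ entrywise to each generating matrix yields an element of $\Gamma$ in $PGL_2(L)$. The group $\Gal(L/K)$ is generated by $\sigma$ (swapping $\pm\sqrt{2}$, fixing $i$) and $\tau$ (swapping $\pm i$, fixing $\sqrt{2}$). Since $P$ has entries in $K[\sqrt 2]$ and $Q$ has entries in $K[i]$, one gets $\tau(\gamma_1) = \gamma_1$ and $\sigma(\gamma_2) = \gamma_2$ for free. For the remaining two cases, a short computation yields $\sigma(P) = NP$ in $PGL_2(L)$, where $N$ represents the Möbius transformation $z \mapsto -z$; since conjugation by $N$ swaps the fixed points $\pm\sqrt{2}$ of $\gamma_1$ and therefore inverts the multiplier, $\sigma(\gamma_1) = N\gamma_1 N^{-1} = \gamma_1^{-1}$. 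Analogously, $\tau(Q) = QS$ where $S$ represents $z\mapsto 1/z$, and since $SAS^{-1} = A^{-1}$ this gives $\tau(\gamma_2) = \gamma_2^{-1}$. All Galois conjugates of the generators lie in $\Gamma$, so condition~(c) holds; Proposition~\ref{Pdescentcriterion} then gives the descent of $Y$ to a $K$-curve $X$.

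The main obstacle I anticipate is the careful bookkeeping in part~(i): tracking Berkovich discs through the compositions $PAP^{-1}$ and $QAQ^{-1}$ requires attention to which fixed point is attracting, as the non-archimedean convention $|2|<1$ can invert the intuition inherited from the complex-analytic setting. Once the images are correctly identified, the Schottky axioms reduce to routine valuation arithmetic, and part~(ii) then follows mechanically from the two identities $\sigma(P) = NP$ and $\tau(Q) = QS$.
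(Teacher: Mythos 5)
Your proposal is correct and takes essentially the same route as the paper's proof: part (i) is the same direct computation, transporting discs through the factorizations $PA^{\pm 1}P^{-1}$ and $QA^{\pm 1}Q^{-1}$ (your disjointness valuations $3/2$, $1$, $3/4$ agree with the distances the paper uses later to describe $\Sigma_F$), and part (ii) checks Galois stability of the generators and invokes Proposition~\ref{Pdescentcriterion}, your entrywise condition (c) being equivalent to the paper's condition (b), with the same fixed-point/``reversing the flow'' justification. Your caution about attracting versus repelling fixed points is well placed: since $|2|<1$, the attracting fixed point of $\gamma_1=PAP^{-1}$ is $P(0)=-\sqrt{2}$ (for instance $\gamma_1(0)=\sqrt{2}/3$ lies in $B(-\sqrt{2},2)$ but not in $\ol{B}(\sqrt{2},2)$, and one finds $\gamma_1\bigl(\P^1\setminus\ol{B}(\sqrt{2},2)\bigr)=B(-\sqrt{2},2)$), so carrying out your computation attaches $\ol{B}(-\sqrt{2},2)$ to $\gamma_1$ and $\ol{B}(\sqrt{2},2)$ to $\gamma_1^{-1}$ (similarly for $\gamma_2$), i.e.\ the displayed labels come out interchanged relative to the proposition's statement --- a harmless relabeling (equivalently, replace the generators by their inverses) that affects neither the Schottky figure as a set of discs, nor the fundamental domain, nor part (ii).
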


\begin{proof}
  One computes straightforwardly that $\gamma(\proj^1_L \setminus
D^+(\gamma^{-1})) = D^-(\gamma)$ for
$\gamma \in \{\gamma_1^{\pm 1}, \gamma_2^{\pm 1}\}$.  Since each
$D^-(\gamma)$ is a maximal open subdisk of $D^+(\gamma)$, part (i) 
follows from Definition~\ref{def:Schottkyfigure}.

To prove part (ii), one first calculates explicitly that, as
automorphisms of $\proj^1_L$, we have $\sigma \gamma_1 \sigma^{-1} = \gamma_1^{-1}$ and $\tau
\gamma_2  \tau^{-1} = \gamma_2^{-1}$, whereas $\sigma$ commutes with
$\gamma_2$ and $\tau$ commutes with $\gamma_1$ (conceptually, this is
because the fixed points of $\gamma_1$, resp.\ $\gamma_2$, are $\pm
\sqrt{2}$, resp.\ $1 \pm i$, and $A$ is defined over $K$.  Thus conjugating by $\sigma$, resp.\ $\tau$, represents
reversing the direction of the ``flow'' of $\gamma_1$, resp.\ $\gamma_2$). 
So $\Gamma$ satisfies Proposition~\ref{Pdescentcriterion}(b), and thus $Y$ descends to a curve $X$
defined over $K$ such that $X \times_K L \cong Y$. In fact, since $\sigma$ and $\tau$ normalize
$\Gamma$ directly (without resorting to conjugation by an element of
$PGL_2(L)$), we have that $\Gal(L/K)$ acts on $Y$ via the
descent of its standard action on  $\proj^{1, \an}_L \setminus
\mc{L}$, and $X
= Y / \Gal(L/K)$.
\end{proof}

The curve $X = Y/(\Gal(L/K))$ is an arithmetic $K$-form of $Y$,
consistent with the notation in \S\ref{Sgeneralities}.

\begin{lemma}\label{Lrationalpoint}
The curve $X$ has a $K$-rational point.  In particular, it has index $1$.
\end{lemma}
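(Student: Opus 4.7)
The plan is to exhibit a $K$-rational point of $\proj^1$ inside the fundamental domain $F$ and then descend it through the uniformization.

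I would take $P = 0 \in \proj^1(K)$ as the candidate. To verify $P \in F$, it suffices to check that $0$ lies outside each of the four closed discs listed in Proposition~\ref{PSchottky}(i). Since $v_K(\pm\sqrt{2}) = 1/2$, we have $v_K(0 - (\pm\sqrt{2})) = 1/2 < 2$, so $0 \notin \ol{B}(\pm\sqrt{2}, 2)$. Similarly, from $(1+i)(1-i) = 2$ one deduces $v_K(1\pm i) = 1/2 < 3/2$, so $0 \notin \ol{B}(1 \pm i, 3/2)$. Hence $P \in F \subseteq \mf{D}$, and its image $[P]$ under the uniformization $p \colon \mf{D} \to Y^{\an}$ is a well-defined $L$-point of $Y$.

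It remains to see that $[P]$ corresponds to a $K$-point of $X$. By Proposition~\ref{PSchottky}(ii), the twisted $\Gal(L/K)$-action on $Y$ that witnesses the descent $Y \cong X \times_K L$ is the descent of the standard arithmetic $\Gal(L/K)$-action on $\proj^{1,\an}_L$ restricted to $\mf{D}$. Since $P = 0$ is $K$-rational, it is pointwise fixed by this arithmetic action, so $[P] \in Y(L) = X(L)$ is fixed by the twisted Galois action. By Galois descent for $L$-points, $[P]$ comes from an element of $X(K)$, so $X(K) \neq \emptyset$, and in particular $X$ has index~$1$.

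I do not anticipate a genuine obstacle: the argument is a short numerical check plus a routine descent observation. The one point to handle carefully in the write-up is the identification of the twisted action on $Y$ with the descent of the natural action on $\proj^{1,\an}_L$, which is precisely what Proposition~\ref{PSchottky}(ii) supplies.
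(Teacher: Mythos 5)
Your proposal is correct and follows essentially the same route as the paper: the paper's proof also takes the point $0$, observes it avoids the discs $D^-(\gamma)$ (hence lies in $\mf{D}$, indeed in $F$), and uses that the $\Gal(L/K)$-action on $Y$ is the descent of the arithmetic action (Proposition~\ref{PSchottky}(ii)) to conclude its image is a $K$-rational point of $X$. Your explicit valuation checks and the Galois-descent step are just a more detailed write-up of the same argument.
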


\begin{proof}
Since the $L$-rational point $0 \in \aff^{1, \an}_L \subseteq \proj^{1, \an}_L$ does not lie in any of the
$D^-(\gamma)$, it does not lie in $\mc{L}$ (see
Notation~\ref{NSchottky}).  So $0 \in \mf{D}$, and its image
under $\mf{D} \to Y^{\an} \to X^{\an}$ is
a $K$-rational type 1 point of $X^{\an}$.  Thus $X$ has a $K$-rational point.
\end{proof}

Now, we define $\mc{Y}^{\stab}$, $\mc{Y}'$, 
$F$, $\ol{F}$,
$\Sigma_Y$,
$S_Y$,  $\Sigma_F$,
and $S_F$ as in \S\ref{Sgeneralities}. 
%
%
The skeleton $\Sigma_{F}$ is the skeleton
connecting the type 2 points $\eta_{\sqrt{2}, 2}$, $\eta_{1+i, 3/2}$, $\eta_{-\sqrt{2}, 2}$, and
$\eta_{1 - i, 3/2}$ of $\proj^{1, \an}_L$, but not including the latter two points.  We have
$$|\sqrt{2} - (-\sqrt{2})| = |2^{3/2}|,\ \ |(1 + i) - (1-i)| =
|2^1|,\ \ |\pm \sqrt{2} - (1 \pm i)| = |2^{3/4}|.$$  It follows that
the set $S_{F}$ of multiplicity $1$ points on $\Sigma_{F}$
consists of the points $\eta_{a, r}$, where the $(a,r)$ correspond to
the black dots in Figure~\ref{Fdual}.
%
The skeletons $\Sigma_F$ and $\Sigma_Y$ are pictured in
Figure~\ref{Fdual}.  Note that $\Sigma_Y$ is the dual graph of
the special fiber of $\mc{Y}'$. 

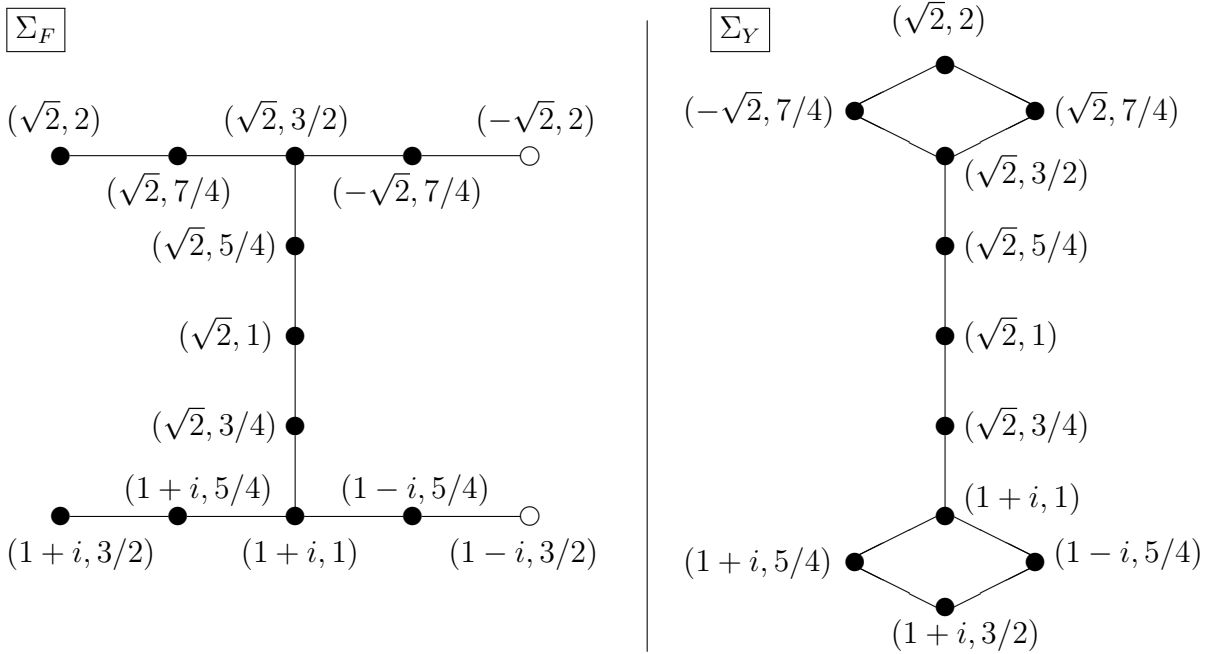
\begin{figure}
\begin{center}
\setlength{\unitlength}{1.2mm}
\begin{picture}(130,70)
\put(72, 0){\line(0,1){70}}
  \put(1, 68){$\boxed{\Sigma_F}$}
  \put(1, 58){$(\sqrt{2}, 2)$}
  \put(25, 58){$(\sqrt{2}, 3/2)$}
  \put(53, 58){$(-\sqrt{2}, 2)$}
  \put(12, 50){$(\sqrt{2}, 7/4)$}
  \put(37, 50){$(-\sqrt{2}, 7/4)$}
  \put(17, 44){$(\sqrt{2}, 5/4)$}
  \put(20, 34){$(\sqrt{2}, 1)$}
  \put(17, 24){$(\sqrt{2}, 3/4)$}
  \put(1, 10){$(1+i, 3/2)$}
  \put(27, 10){$(1+i,1)$}
  \put(38, 17){$(1-i,5/4)$}
  \put(14, 17){$(1+i,5/4)$}
  \put(50, 10){$(1-i, 3/2)$}

  \put(7,55){\circle*{2}}
\put(20,55){\circle*{2}}
\put(33,55){\circle*{2}}
\put(46,55){\circle*{2}}
\put(59,55){\circle{2}}
\put(33,45){\circle*{2}}
\put(33,35){\circle*{2}}
\put(33,25){\circle*{2}}
\put(33,15){\circle*{2}}
\put(7,15){\circle*{2}}
\put(20,15){\circle*{2}}
\put(46,15){\circle*{2}}
\put(59,15){\circle{2}}

\put(8, 55){\line(1,0){11}}
\put(21, 55){\line(1,0){11}}
\put(34, 55){\line(1,0){11}}
\put(47, 55){\line(1,0){11}}
\put(8, 15){\line(1,0){11}}
\put(21, 15){\line(1,0){11}}
\put(34, 15){\line(1,0){11}}
\put(47, 15){\line(1,0){11}}
\put(33, 54){\line(0,-1){8}}
\put(33, 44){\line(0,-1){8}}
\put(33, 34){\line(0,-1){8}}
\put(33, 24){\line(0,-1){8}}

\put(79, 68){$\boxed{\Sigma_Y}$}
\put(99, 69){($\sqrt{2}, 2)$}
\put(117, 59){($\sqrt{2}, 7/4)$}
  \put(107, 52){$(\sqrt{2}, 3/2)$}
  \put(76, 59){$(-\sqrt{2}, 7/4)$}
  \put(107, 44){$(\sqrt{2}, 5/4)$}
  \put(107, 34){$(\sqrt{2}, 1)$}
  \put(107, 24){$(\sqrt{2}, 3/4)$}
  \put(99, 1){$(1+i, 3/2)$}
  \put(107, 16){$(1+i,1)$}
  \put(117, 10){$(1-i,5/4)$}
  \put(76, 9){$(1+i,5/4)$}

\put(105,65){\circle*{2}}
\put(105,55){\circle*{2}}
\put(105,45){\circle*{2}}
\put(105,35){\circle*{2}}
\put(105,25){\circle*{2}}
\put(105,15){\circle*{2}}
\put(105, 5){\circle*{2}}
\put(95, 60){\circle*{2}}
\put(115, 60){\circle*{2}}
\put(95, 10){\circle*{2}}
\put(115, 10){\circle*{2}}

\put(104, 65){\line(-2, -1){9}}
\put(106, 65){\line(2, -1){9}}
\put(104, 55){\line(-2, 1){9}}
\put(106, 55){\line(2, 1){9}}
\put(105, 54){\line(0, -1){8}}
\put(105, 44){\line(0, -1){8}}
\put(105, 34){\line(0, -1){8}}
\put(105, 24){\line(0, -1){8}}
\put(104, 15){\line(-2, -1){9}}
\put(106, 15){\line(2, -1){9}}
\put(104, 5){\line(-2, 1){9}}
\put(106, 5){\line(2, 1){9}}

\end{picture}
\end{center}
\caption{The skeletons $\Sigma_F$  and $\Sigma_Y$.  Black dots correspond to the points of $S_F$ and $S_Y$,
  respectively.  White dots are on the boundary of $\Sigma_F$ but not included.  In $\Sigma_F$, a dot labeled $(a, r)$ corresponds to
  the point $\eta_{a,r}$.  In $\Sigma_Y$, a dot labeled $(a,r)$ is
  the image of the corresponding dot on $\Sigma_F$.}\label{Fdual}
\end{figure}

\begin{prop}\label{Pminfield}
The extension $L/K$ is the minimal extension over which $X$ acquires
semi-stable reduction.
\end{prop}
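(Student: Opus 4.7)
The plan is to establish that the action of $\Gal(L/K)$ on the special fiber of the stable model $\mc{Y}^{\stab}$ is faithful. Granting this, if $L_{0}/K$ denotes the minimal extension over which $X$ acquires semi-stable reduction, then Remark~\ref{Rfaithful} applied with $L' = L$ forces $\Gal(L/L_{0}) = 1$, hence $L_{0} = L$ as desired. Since $\Gal(L/K) \cong \ints/2 \times \ints/2$ is generated by $\sigma$ and $\tau$, faithfulness amounts to verifying that each of $\sigma$, $\tau$, and $\sigma\tau$ acts nontrivially on $\mc{Y}^{\stab}_k$.

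First I would identify $\mc{Y}^{\stab}$ from the skeleton $\Sigma_Y$ pictured in Figure~\ref{Fdual}. Inspection of $\Sigma_Y$ shows exactly two vertices of valence $\geq 3$, namely $v_t := \eta_{\sqrt{2}, 3/2}$ and $v_b := \eta_{1+i, 1}$; every other vertex has valence $2$. Since irreducible components of the stable model of a Mumford curve must carry at least three nodes, iteratively contracting the valence-$2$ vertices of $\Sigma_Y$ produces the dual graph of $\mc{Y}^{\stab}$: two vertices $v_t$ and $v_b$, each bearing a self-loop (the top one from the chain through $\eta_{\pm\sqrt{2}, 7/4}$ and $\eta_{\sqrt{2}, 2}$, the bottom one from the chain through $\eta_{1\pm i, 5/4}$ and $\eta_{1+i, 3/2}$), joined by a single edge coming from the middle chain. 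Thus the special fiber of $\mc{Y}^{\stab}$ consists of two components $V_t$ and $V_b$, each a $\proj^1$ with a self-node, meeting at one additional node.

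Finally, I would compute the induced action of each nontrivial $\delta \in \Gal(L/K)$ on this dual graph. Both $v_t$ and $v_b$ are fixed by all of $\Gal(L/K)$: indeed $|\sqrt{2} - (-\sqrt{2})| = |2|^{3/2}$ forces $\ol{B}(\sqrt{2}, 3/2) = \ol{B}(-\sqrt{2}, 3/2)$, so $\eta_{\sqrt{2}, 3/2} = \eta_{-\sqrt{2}, 3/2}$ in $\P^{1,\mathrm{an}}_L$, and analogously $\eta_{1+i, 1} = \eta_{1-i, 1}$ because $|(1+i)-(1-i)| = |2|$. For the self-loops, $\sigma$ exchanges the distinct vertices $\eta_{\sqrt{2}, 7/4}$ and $\eta_{-\sqrt{2}, 7/4}$ that lie on the two halves of the top loop, and therefore swaps the two branches at the self-node on $V_t$; since $\sigma$ fixes $i$, it fixes every vertex on the bottom loop and preserves the branches at the self-node on $V_b$. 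Symmetrically, $\tau$ swaps the branches on $V_b$ but not those on $V_t$, and $\sigma\tau$ swaps both pairs. Each nontrivial element of $\Gal(L/K)$ therefore acts nontrivially on $\mc{Y}^{\stab}_k$, completing the proof. The main obstacle will be bookkeeping: one must carefully distinguish identifications of vertices of $\Sigma_{\mf{D}}$ caused by the Schottky group action (in the passage to $\Sigma_Y$) from coincidences of points already present in $\P^{1,\mathrm{an}}_L$ because the corresponding closed disks coincide --- the latter being exactly what makes $v_t$ and $v_b$ themselves Galois-invariant even though their ``$-\sqrt{2}$'' and ``$1-i$'' versions are moved by $\sigma$ and $\tau$ respectively.
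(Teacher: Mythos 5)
Your proposal is correct and follows essentially the same route as the paper: reduce to faithfulness of the $\Gal(L/K)$-action on $\mc{Y}^{\stab}_k$ via Remark~\ref{Rfaithful}, identify the stable model as the two components coming from $\eta_{\sqrt{2},3/2}$ and $\eta_{1+i,1}$, and detect nontriviality of $\sigma$ and $\tau$ (and hence $\sigma\tau$) through the swapping of $\eta_{\pm\sqrt{2},7/4}$ and $\eta_{1\pm i,5/4}$ respectively. Your extra bookkeeping (Galois-invariance of the two stable vertices and the explicit check for $\sigma\tau$) only spells out details the paper leaves implicit.
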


\begin{proof}
By construction, $Y \cong X \times_K L$ has semi-stable reduction.  If $X$ attained
semi-stable reduction over a Galois subextension $K \subseteq M
\subsetneq L$, then the $\Gal(L/K)$-action on the special fiber
$\mc{Y}^{\stab}_k$ of the stable model $\mc{Y}^{\stab}$ of $Y$
would factor through $\Gal(M/K)$
(see Remark~\ref{Rfaithful}), so it suffices to show that
$\Gal(L/K)$ acts faithfully on $\mc{Y}^{\stab}_k$.  From the
right-hand part of Figure~\ref{Fdual}, one sees that
$\mc{Y}^{\stab}$ is constructed from $\mc{Y}'$ by contracting all
components of the special fiber of $\mc{Y}'$ except those
corresponding to the images of $\eta_{\sqrt{2}, 3/2}$ and $\eta_{1 +i ,
1}$.  But $\sigma$ acts nontrivially on the first component (since it
switches $\eta_{\pm\sqrt{2}, 7/4}$), and $\tau$ acts nontrivially on
the second component (since it switches $\eta_{1 \pm i, 5/4}$).  So
$\Gal(L/K)$ acts faithfully on $\mc{Y}^{\stab}_k$.  
\end{proof}

Now we define $\mc{Y}''$ by letting $\mc{Y}' \to \mc{Y}''$ be the morphism
given by blowing down the components corresponding to the vertices
labeled $(\pm \sqrt{2}, 7/4)$, $(1 \pm i,
5/4)$, $(\sqrt{2}, 3/4)$, and $(\sqrt{2}, 5/4)$ in the right-hand part
of Figure~\ref{Fdual}.
Then $\mc{Y}''$ is also a semi-stable model of $X_L$ on which
$\Gal(L/K)$ acts, consistent with the assumptions on $\mc{Y}''$ in
\S\ref{Sgeneralities}.  As in \S\ref{Sgeneralities}, we set $\mc{X}:=~
\mc{Y}''/(\Gal(L/K))$ and $\mc{X}_k$ is its special fiber.

\begin{lemma}\label{Lallcomponents}
  All components of $\mc{X}_k$ have multiplicity dividing $2$.
\end{lemma}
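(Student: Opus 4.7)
The plan is to show that each irreducible component of $\mc{X}_k$ arises as the image, under the quotient map $\mc{Y}'' \to \mc{X}$, of one of the surviving components of $\mc{Y}''_k$, and that each such image has multiplicity dividing $2$ by a direct application of Lemma~\ref{lem:multiplicities}. Bookkeeping from Figure~\ref{Fdual} and the prescribed blow-downs $\mc{Y}' \to \mc{Y}''$ identifies these as exactly five components, corresponding under Proposition~\ref{prop:models} to the five type 2 points
\[
\eta_{\sqrt{2},2},\ \eta_{\sqrt{2},3/2},\ \eta_{\sqrt{2},1},\ \eta_{1+i,3/2},\ \eta_{1+i,1}.
\]

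Next I would verify that each of these five points is defined, in the sense of Definition~\ref{def:defined}, over a quadratic subextension of $L/K$. The first three have center $\sqrt{2} \in K(\sqrt{2})$, and the last two have center $1+i \in K(i)$. Both $K(\sqrt{2})/K$ and $K(i)/K$ are totally ramified of degree $2$ (for $K(i)$, note $(1+i)^2 = 2i$, which forces $v_K(1+i) = 1/2$), so their value groups both equal $\tfrac{1}{2}\ints$, which absorbs each of the radii $1$, $3/2$, $2$ appearing in the list. Hence each of the five type 2 points is defined over a subfield $M$ of $L$ with $[M:K] = 2$.

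Applying Lemma~\ref{lem:multiplicities} to each of the five type 2 points $x$ then yields $m(\pi(x)) \mid [M:K] = 2$, where $\pi\colon X_L^{\mathrm{an}} \to X^{\mathrm{an}}$ is the base-change projection. Since every component of $\mc{X}_k$ is the image under $\mc{Y}'' \to \mc{X}$ of some component of $\mc{Y}''_k$, and these images correspond via Proposition~\ref{prop:models} to the points $\pi(x)$, we conclude that every component of $\mc{X}_k$ has multiplicity dividing $2$. The only substantive input is Lemma~\ref{lem:multiplicities}; the rest is unpacking the setup of \S\ref{Sgeneralities} and translating the blow-down picture into a list of type 2 points, so I do not expect any serious obstacle.
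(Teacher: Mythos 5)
Your proof is correct and follows essentially the same route as the paper: the paper likewise observes that after the blow-down $\mc{Y}' \to \mc{Y}''$ every remaining component corresponds to a type 2 point defined over $K(\sqrt{2})$ or $K(i)$ and then invokes Lemma~\ref{lem:multiplicities}. Your version simply makes the bookkeeping explicit by listing the five surviving points $\eta_{\sqrt{2},2}$, $\eta_{\sqrt{2},3/2}$, $\eta_{\sqrt{2},1}$, $\eta_{1+i,3/2}$, $\eta_{1+i,1}$ and checking their fields of definition directly, which matches the paper's argument.
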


\begin{proof}
 Our construction of $\mc{Y}''$ blew down all the components with
 minimal field of definition $L$. So the type 2 points corresponding to the irreducible
 components of the special fiber of $\mc{Y}''$ are each defined over
 either $K(\sqrt{2})$ or $K(i)$.  By Lemma~\ref{lem:multiplicities},
 the images of these components in $\mc{X}$ have multiplicity dividing 2.
\end{proof}

Write $V_1, V_2$ for the irreducible
components of $\mc{X}_k$ corresponding to the vertices
$(\sqrt{2}, 2)$, $(1+i, 3/2)$ in Figure~\ref{Fdual}, respectively.
Write $U$ for the union of all of the other irreducible
components of $\mc{X}_k$. 

As in \S\ref{Sgeneralities}, let $pr \colon \mc{X}^{\reg} \to \mc{X}$ be the minimal regular snc-resolution of
$\mc{X}$.  Write $\mc{X}^{\reg}_k$ for the special fiber of
$\mc{X}^{\reg}$.  Recall also from \S\ref{Sgeneralities} that we
have the cover $\mc{D}'' \to \mc{Y}''$.

\begin{prop}\label{Pweakwild}
  If $E$ is a principal component of $\mc{X}^{\reg}_k$ that is
  exceptional for $pr \colon \mc{X}^{\reg} \to \mc{X}$, then the multiplicity of
  $E$ divides $2$.
\end{prop}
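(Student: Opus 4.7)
The plan is to show, for each principal exceptional component $E$ of $\mc{X}^{\reg} \to \mc{X}$ with image $x := pr(E) \in \mc{X}_k$, that one of Proposition~\ref{Pexceptionalmultiplicity1} or Proposition~\ref{Pexceptionalmultiplicity2} applies to force $m_E \mid 2$. By Lemma~\ref{Lallcomponents}, the lcm $d$ of the multiplicities of the components of $\mc{X}_k$ passing through $x$ always divides $2$, so whenever the orbit hypothesis $\Gal(L/K)\tilde{y} \subseteq \ol{F}$ of Proposition~\ref{Pexceptionalmultiplicity1} holds at $x$, we immediately conclude $m_E \mid d \mid 2$.

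First I would verify the orbit hypothesis for as many singularities as possible via a direct computation. The key observation is that $\sigma$ swaps $D^{\pm}(\gamma_1)$ with $D^{\pm}(\gamma_1^{-1})$ (since it sends $\sqrt{2} \mapsto -\sqrt{2}$) while fixing the $\gamma_2$-discs setwise, and $\tau$ does the analogous thing reversing the roles of $\gamma_1$ and $\gamma_2$. Consequently $\Gal(L/K)$ stabilizes $\ol{F}$ up to boundary swaps, so for any lift $\tilde{y} \in \ol{F}$ whose Galois orbit does not escape $\ol{F}$, Proposition~\ref{Pexceptionalmultiplicity1} applies.

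For singularities where this orbit argument is too delicate---typically those $x$ arising as wild quotients of smooth fixed points of $\Gal(L/K)$ on $\mc{Y}''_k$---I would apply Proposition~\ref{Pexceptionalmultiplicity2} instead. Such an $x$ lies on a unique irreducible component $V$ of $\mc{X}_k$, and by the construction of $\mc{Y}''$ (which blew down precisely the components with minimal field of definition $L$), the type-2 point corresponding to $V$ is defined over either $K(\sqrt{2})$ or $K(i)$, both of which are degree-$2$ extensions of $K$ with Galois group $\mathbb{Z}/2 \cong \mathbb{Z}/p$. The non-triviality of the $\Gal(M/K)$-action on the preimage of $V$ in $\mc{Y}''/\Gal(L/M)$ follows from Lemma~\ref{lem:smoothdescent}---which tells us that a trivial action would force $x$ to remain smooth in $\mc{X}$---combined with the assumption that $E$ is exceptional (so $x$ is singular). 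Proposition~\ref{Pexceptionalmultiplicity2} then yields $m_E \mid p = 2$.

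The main obstacle will be the case analysis itself: explicitly enumerating the singularities of $\mc{X}$ from the structure of $\mc{Y}''_k$ together with the $\Gal(L/K)$-action, and matching each to the correct proposition. The boundary type-2 points $\eta_{\pm\sqrt{2}, 2}$ and $\eta_{1\pm i, 3/2}$ of $F$ require particular care, since the open-versus-closed distinction in the Schottky figure interacts subtly with the Galois action there; one must ensure that no principal exceptional component is missed in the transition between the two cases.
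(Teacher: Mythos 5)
Your first case is essentially the paper's: for $x$ lying on the components other than $V_1$ and $V_2$ (the union $U$), the $\Gal(L/K)$-orbit of $\tilde{y}$ stays in $\ol{F}$, and Proposition~\ref{Pexceptionalmultiplicity1} together with Lemma~\ref{Lallcomponents} gives $m_E \mid 2$. The gap is in your second case, where you claim that the hypothesis of Proposition~\ref{Pexceptionalmultiplicity2} --- that $\Gal(M/K)$ acts nontrivially on the preimage of $V$ in $\mc{Y}''/\Gal(L/M)$ --- follows from Lemma~\ref{lem:smoothdescent} plus the fact that $x$ is singular. Lemma~\ref{lem:smoothdescent} only concerns the quotient by $\Gal(L/M)$: it says that the image $y^H$ of a smooth point is smooth in $\mc{Y}''/\Gal(L/M)$, and it says nothing about the further quotient by $\Gal(M/K)$; in particular it does not say that a $\Gal(M/K)$-action which is trivial on the component would force $x$ to be smooth. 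That implication is false in general: a semilinear $\ints/2$-action on a smooth formal germ that induces the trivial action on the special fiber can still have a singular quotient. For instance, with $M = K(\sqrt{2})$ and the action $\sigma(t) = -t$, $\sigma(\sqrt{2}) = -\sqrt{2}$ on $\mc{O}_M[[t]]$, the induced action on $k[[t]]$ is trivial (residue characteristic $2$), yet the invariant ring is $W(k)[[t^2, \sqrt{2}\,t]]$, which has embedding dimension $3$ and dimension $2$, hence is singular. So ``$x$ singular $\Rightarrow$ nontrivial action on the component'' is not a valid inference; and when the action on the component is trivial the ramification jump need not be $1$, the singularity need not be weak wild, and the multiplicity bound of Proposition~\ref{Pexceptionalmultiplicity2} is simply not available.

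What the paper does at this point, and what your proposal is missing, is a direct verification of the nontriviality: $\sigma$ switches the two points $\eta_{\pm\sqrt{2},\,7/4}$ lying on the component above $V_1$ in $\mc{Y}''/\langle \tau \rangle$, and $\tau$ switches $\eta_{1 \pm i,\,5/4}$ on the component above $V_2$ in $\mc{Y}''/\langle \sigma \rangle$; these explicit switches show the $\Gal(M/K)$-action on the relevant genus-zero component is nontrivial, which is exactly the input Proposition~\ref{Pexceptionalmultiplicity2} needs. The check is short, but it is genuine content that cannot be replaced by the contrapositive argument you propose.
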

  

\begin{proof}
%
Let $x$ be the image of $E$ in $\mc{X}$.  First assume $x \in
U$.  Then, if $\tilde{y} \in \ol{F} \subseteq \mc{D}''$ lies above $x$ as in
Lemma~\ref{LtransfertoY}, we have $\Gal(L/K)\tilde{y}$ lies in $\ol{F}$, so by
Proposition~\ref{Pexceptionalmultiplicity1} and
Lemma~\ref{Lallcomponents}, the multiplicity of $E$ divides $2$.

Now, assume $x \notin U$.  Then we are in the situation of
Proposition~\ref{Pexceptionalmultiplicity2}, with $M = K(\sqrt{2})$ or
$M = K(i)$, depending on whether $x \in V_1$ or $V_2$.  Observe that,
in the first case,
$\Gal(M/K)$ acts non-trivially on $\mc{Y}''/\langle \tau
\rangle$ above $V_1$, since it switches $\eta_{\pm \sqrt{2},
  7/4}$, and in the second case, $\Gal(M/K)$ acts nontrivially on $\mc{Y}''/\langle \sigma
\rangle$ above $V_2$, since it switches $\eta_{1 \pm i, 5/4}$.  So
the proposition follows from
Proposition~\ref{Pexceptionalmultiplicity2}. 
\end{proof}

\begin{corollary}\label{Cexceptional}
Every principal component of $\mc{X}^{\reg}_k$ has
multiplicity dividing $2$.
\end{corollary}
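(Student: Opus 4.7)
The plan is to split the principal components of $\mc{X}^{\reg}_k$ into two classes according to whether they are exceptional for the resolution $pr \colon \mc{X}^{\reg} \to \mc{X}$, and handle each class separately using the results already in hand.

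First, suppose $E$ is a principal component of $\mc{X}^{\reg}_k$ that is not exceptional for $pr$. Then $E$ is the strict transform of some irreducible component $V$ of $\mc{X}_k$, and the multiplicity of $E$ in $\mc{X}^{\reg}_k$ equals the multiplicity of $V$ in $\mc{X}_k$ (the minimal regular snc-resolution is obtained by a sequence of blow-ups at closed points, and such blow-ups do not change the multiplicity of the strict transform of a component already present). By Lemma~\ref{Lallcomponents}, $m_V$ divides $2$, so $m_E \mid 2$.

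Second, suppose $E$ is a principal component of $\mc{X}^{\reg}_k$ that is exceptional for $pr$. Then Proposition~\ref{Pweakwild} applies directly and yields $m_E \mid 2$.

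Combining the two cases, every principal component of $\mc{X}^{\reg}_k$ has multiplicity dividing $2$, as claimed. The only subtlety is the fact about multiplicities of strict transforms under blow-ups at closed points, but this is standard (see e.g. \cite[Lemma~9.2.25]{LiuBook}), so no serious obstacle arises.
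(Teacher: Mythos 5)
Your proposal is correct and follows the same route as the paper, which deduces the corollary immediately from Lemma~\ref{Lallcomponents} (for components that are strict transforms of components of $\mc{X}_k$, whose multiplicity is unchanged since $pr$ is an isomorphism over their generic points) and Proposition~\ref{Pweakwild} (for exceptional components). Your explicit two-case split just spells out what the paper leaves implicit.
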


\begin{proof}
  This follows immediately from Lemma~\ref{Lallcomponents} and
  Proposition~\ref{Pweakwild}.
\end{proof}

%

In particular, $e(\mc{X}^{\reg})$ divides $2$ (in fact, it equals $2$).  
As mentioned in \cite[top of p.\
46]{HalleNicaise}, this implies that the stabilization index $e(X)$
divides $2$.  Since $[L:K] = 4$, this completes the example.

\subsection{A class of examples ($g=2p$, any $p$)}

Let $k$ be an algebraically closed field of characteristic $p$.  We
fix a $p^2$-th root of unity $\zeta$ in $\overline{\Frac(W(k))}$, we let $K=\Frac(W(k))(\zeta^p)$, and we fix a uniformizer $\pi_K$ of $K$.
Moreover, we choose an element $s\in K$ such that $|s|=1$ and $|s-1|=1$ and we set $\alpha= s+\sqrt[p]{\pi_K}$. 
We consider the finite Galois extension $L=K(\zeta, \alpha)$ of $K$.
Since $L$ is obtained from $K$ by adjoining $p$-th roots, we can use Kummer theory to establish that the Galois group $\Gal(L/K)$ is isomorphic to $\Z/p\Z \times \Z/p\Z$, generated by automorphisms $\sigma$ and $\tau$ such that 
\[\begin{cases} \sigma(\zeta) = \zeta^{p+1}, & \tau(\zeta)=\zeta, \\ \sigma(\alpha)=\alpha, & \tau(\alpha) = s+\zeta^p \sqrt[p]{\pi_K}.\end{cases}\] 
In the rest of the section, we construct a curve $X$ over $K$ with potentially multiplicative reduction, a $K$-rational point, stabilization index $e(X) = p$, and such that $L/K$ (of degree $p^2$) is the minimal extension over which $X$ acquires semi-stable reduction.
This completes the proof of Theorem~\ref{Tnegative}.

Let $\beta, \beta' \in K$ be such that $|\beta|, |\beta'|\leq|\pi_K|^4$. 
For $i=0,\dots, p-1$ we define $A_i$ to be the following element of
$PGL_2(L)$:

\[  A_{i}:= \begin{bmatrix} (1-\beta \pi_K)\cdot\sigma^{i}(\zeta) & (\beta-1)\pi_K\cdot\sigma^{i}(\zeta^2) \\ 1-\beta & (\beta-\pi_K)\cdot\sigma^{i}(\zeta) \end{bmatrix}.\]
Similarly, we define $B_i$ $(i=0,\dots, p-1)$ to be the following elements of $PGL_2(L)$:
\[  B_{i}:= \begin{bmatrix} (1-\beta' \pi_K)\cdot\tau^{i}(\alpha) & (\beta'-1)\pi_K\cdot\tau^{i}(\alpha^2) \\ 1-\beta' & (\beta'-\pi_K)\cdot\tau^{i}(\alpha) \end{bmatrix}.\]
Finally, we set 
\[D^+(A_i)=\ol{B}(\sigma^i(\zeta),|\pi_K|^{1+\frac{1}{p}}), D^+(A_i^{-1})=\ol{B}(\pi_K\sigma^i(\zeta),|\beta||\pi_K|^{-(1+\frac{1}{p})}),\] 
and 
\[D^+(B_i)=\ol{B}(\tau^i(\alpha),|\pi_K|^{1+\frac{2}{p}}), D^+(B_i^{-1})=\ol{B}(\pi_K\tau^i(\alpha),|\beta'| |\pi_K|^{-(1+\frac{2}{p})})).\]

\begin{prop}\label{prop:GeneralExamplesDescent}
The subgroup $\Gamma$ of $PGL_2(L)$ generated by $A_0,\dots,
A_{p-1},B_0,\dots,B_{p-1}$ is a Schottky group of rank $2p$ satisfying
all the conditions of Proposition \ref{Pdescentcriterion}.
As a result, the Mumford curve uniformized by $\Gamma$ is defined over $K$.
\end{prop}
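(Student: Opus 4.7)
The plan is to verify two things: first, that the $4p$ discs listed in the statement form a Schottky figure adapted to $(A_0,\dots,A_{p-1}, B_0, \dots, B_{p-1})$, so that $\Gamma$ is a Schottky group of rank $2p$ by Definition~\ref{def:Schottkyfigure}; second, that $\Gamma$ satisfies condition~(c) of Proposition~\ref{Pdescentcriterion}. For pairwise disjointness of the $4p$ discs, the key identity is $\sigma^i(\zeta) = \zeta\xi^i$, where $\xi := \zeta^p$ is a primitive $p$-th root of unity in $K$; this follows from $\sigma(\zeta) = \zeta^{p+1}$ together with the binomial expansion $(p+1)^i \equiv 1 + ip \pmod{p^2}$. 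Since $\xi - 1$ is a uniformizer of $K$, the centers $\sigma^i(\zeta)$ lie at pairwise distance $|\pi_K|$, strictly exceeding the common radius $|\pi_K|^{1+1/p}$ of the $D^+(A_i)$. An analogous argument using $\tau^i(\alpha) = s + \xi^i \sqrt[p]{\pi_K}$ shows the centers of the $D^+(B_i)$ lie at pairwise distance $|\pi_K|^{1+1/p}$, strictly exceeding their radius $|\pi_K|^{1+2/p}$. The small discs $D^+(A_i^{-1})$ and $D^+(B_i^{-1})$ have centers of absolute value $|\pi_K|$ and radii bounded above by $|\pi_K|^{3-1/p}$ thanks to $|\beta|, |\beta'| \le |\pi_K|^4$, so they are pairwise disjoint and disjoint from the corresponding large discs. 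Finally, $A$-discs and $B$-discs are mutually disjoint because $\zeta\xi^i$ reduces to $1$ while $s$ does not (using $|s-1|=1$), giving distance $1$ between any $A$-center and $B$-center.

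For the maximal open subdisc condition, I solve $A_i(z) = z$ directly to find fixed points $\sigma^i(\zeta)$ and $\pi_K\sigma^i(\zeta)$, precisely the centers of $D^+(A_i)$ and $D^+(A_i^{-1})$, with multiplier $\beta$ computed via the characteristic polynomial. Conjugating $A_i$ by the Möbius transformation $z \mapsto (z-\sigma^i(\zeta))/(z - \pi_K\sigma^i(\zeta))$ turns it into $w \mapsto \beta w$, and carries $D^+(A_i)$ to $\ol{B}(0, |\pi_K|^{1+1/p})$ and $\P^{1,\an}_L \setminus D^+(A_i^{-1})$ to $B(0, |\pi_K|^{1+1/p}/|\beta|)$; multiplication by $\beta$ then yields $D^-(A_i) = B(0, |\pi_K|^{1+1/p})$, an open disc of the same radius as $D^+(A_i)$, hence a maximal open subdisc. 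The computation for $B_i$ is identical, with $\beta'$ in place of $\beta$ and $2/p$ in place of $1/p$.

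Finally, I verify condition~(c) of Proposition~\ref{Pdescentcriterion}. Since $\pi_K, \beta, \beta' \in K$, the action of $\sigma$ or $\tau$ on the entries of any generator only affects the factors $\sigma^i(\zeta)$ or $\tau^i(\alpha)$. Since $\Gal(L/K)$ is abelian with $\sigma(\alpha) = \alpha$ and $\tau(\zeta) = \zeta$, one obtains directly $\sigma(A_i) = A_{i+1 \bmod p}$, $\sigma(B_i) = B_i$, $\tau(A_i) = A_i$, and $\tau(B_i) = B_{i+1 \bmod p}$, which is exactly condition~(c). The descent of the Mumford curve to $K$ is then given by the implication $(c) \Rightarrow (a)$ of Proposition~\ref{Pdescentcriterion}. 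The main obstacle is the careful bookkeeping in the Schottky-figure verification: it must be confirmed that each $D^-(\gamma)$ has exactly the prescribed radius so as to be a maximal open subdisc, which depends on the precise interplay between the multipliers $\beta, \beta'$ and the chosen disc radii.
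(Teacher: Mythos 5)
Your proposal is correct and follows essentially the same route as the paper: you verify the Schottky figure by conjugating each generator to $w\mapsto\beta w$ (your M\"obius change of coordinate is just the inverse of the paper's matrix $P$ in the factorization $A_0=PAP^{-1}$) and checking that each $D^-(\gamma)$ is an open disc of the same radius as $D^+(\gamma)$, and you establish descent by the same entrywise computation $\sigma(A_i)=A_{i+1}$, $\tau(A_i)=A_i$, $\tau(B_i)=B_{i+1}$, $\sigma(B_i)=B_i$, i.e.\ condition (c) of Proposition~\ref{Pdescentcriterion}, followed by $(c)\Rightarrow(a)$. One bookkeeping slip in the disjointness check (a step the paper itself treats even more briefly): the radii of the $D^+(B_i^{-1})$ are bounded by $|\beta'|\,|\pi_K|^{-(1+2/p)}\le|\pi_K|^{3-2/p}$, not $|\pi_K|^{3-1/p}$, and this is in fact the tight constraint, since disjointness of the $D^+(B_i^{-1})$ from one another requires $|\beta'|<|\pi_K|^{3+3/p}$, which the hypothesis $|\beta'|\le|\pi_K|^4$ only guarantees once $p\ge 5$; likewise the distance between a small $A$-center and a small $B$-center is $|\pi_K|$ rather than $1$, though still large enough that disjointness holds there.
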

\begin{proof}
To prove that $\Gamma$ is a Schottky group, let us show that the $4p$-uple \[\left(D^+(A_i), D^+(B_i), D^+(A_i^{-1}), D^+(B_i)^{-1}\right)_{i=0,\dots,p-1}\] is a Schottky figure associated to $\Gamma$ and adapted to the corresponding set of generators.
First of all, we have that $|\alpha - \tau(\alpha)|=|\pi_K|^{\frac{1}{p}}|1-\zeta^p|=|\pi_K|^{\frac{1}{p}+1}$, and similarly that $|\sigma(\zeta)-\zeta|=|\zeta||1-\zeta^p|=|\pi_K|$, so these discs are disjoint.
Moreover, we can explicitly compute the image of these discs under the associated loxodromic transformation.
For example, by writing $A_0=PAP^{-1}$ with $P=\begin{bmatrix}
-\pi_K\zeta & \zeta \\ -1 & 1
\end{bmatrix}$ and $A=\begin{bmatrix}
\beta & 0 \\ 0 & 1
\end{bmatrix}$, we see that the complement of a disc of the form $\ol{B}(\pi_K\zeta,|\beta|\rho)$ with $|\beta|\rho<1$ and $\rho>1$ is transformed by $A_0$ as follows:

\[ \P^1\setminus \ol{B}(\pi_K\zeta,|\beta|\rho) \mathop{\longrightarrow}^{P^{-1}} B(0,|\beta|^{-1}\rho^{-1}) \mathop{\longrightarrow}^{A} B(0,\rho^{-1}) \mathop{\longrightarrow}^{P}  B(\zeta,\rho^{-1}).\]
In particular, we have that $A_0\big(\P^1\setminus D^+(A_0^{-1})\big) =D^-(A_0)$.
In the same way, by replacing $\zeta$ with $\alpha$, and $\beta$ with $\beta'$, we find that $B_0\big(\P^1\setminus D^+(B_0^{-1})\big) =D^-(B_0)$.
By repeating this computation for the other discs above, we show that
they all satisfy the equation (\ref{eq:Schottkyfig}), and hence they
define a Schottky figure adapted to $(A_0,\dots,A_{p-1},B_0,\dots,
B_{p-1})$.

To prove that this group is fixed under the action of $\Gal(L/K)$ we note that $\sigma(A_i)=A_{i+1}$ for $i=0,\dots, p-2$, $\sigma(A_{p-1})=A_{0}$ and $\tau(A_i)=A_i$ for $i=0,\dots, p-1$, and a similar relation holds for the $B_i$'s.
Hence, $\Gamma$ satisfies condition $(c)$ of Proposition
\ref{Pdescentcriterion}, which implies conditions $(b)$ and $(a)$ of
that proposition.  Condition $(a)$ is what we seek.
\end{proof}

With the Schottky figure found in the proof of Proposition \ref{prop:GeneralExamplesDescent} we can associate a fundamental domain for the action of $\Gamma$:
\[F= \P^{1, \mathrm{an}}_L \setminus \bigcup_{i=0}^{p-1}\left( D^-(A_i) \cup D^-(B_i) \cup D^+(A_i^{-1}) \cup D^+(B_i^{-1}) \right),\]
whose skeleton is depicted in Figure \ref{fig:FunDom}.
The vertices $v_i$ in that figure can be described very concretely as sup-norms of closed discs that we denoted by $\eta_{a,\rho}$ in Remark~\ref{Dc}.
Namely, if we set $|\pi_K|=r$ we have
\[ 
v_1 = \eta_{0,1} \;\; v_2 = \eta_{0,r} \;\; v_3 = \eta_{\zeta,r}\;\; v_4 = \eta_{\pi_K\zeta,r^2}\;\; v_5 = \eta_{\alpha,r^{1+\frac{1}{p}}}\;\; v_6= \eta_{\pi_K\alpha,r^{2+\frac{1}{p}}}.\\
\]

The fundamental domain $F$ has $4p$ boundary points, one for each disc that is a connected component of the complement $\P^{1, \mathrm{an}}_L \setminus F$. In the figure, we have labeled these boundary points with the corresponding discs.
\begin{figure}[h]
\centering
\begin{tikzpicture}[
  level distance=3cm]
  \coordinate (a) at (-2,2.5);
  \coordinate (b) at (0,3);
   \coordinate (c) at (2, 2.5);
   \coordinate (d) at (-2, -1.5);
   \coordinate (e) at (0, -1);
   \coordinate (f) at (2, -1.5);
\draw[thin] (b) -- (e);
\draw[thin] (a) -- (b);
\draw[thin] (b) -- (c);
\draw[thin] (d) -- (e);
\draw[thin] (e) -- (f);
\fill (a) circle[radius=2pt];
\fill (b) circle[radius=2pt];
\fill (c) circle[radius=2pt];
\fill (d) circle[radius=2pt];
\fill (e) circle[radius=2pt];
\fill (f) circle[radius=2pt];
\draw[thin] (a) -- (-4,2.5);
\draw[thin] (a) -- (-4,1.5);
\draw[thin, dashed] (a) -- (-4,2);
\draw[thin] (c) -- (4,2.5);
\draw[thin] (c) -- (4,1.5);
\draw[thin, dashed] (c) -- (4,2);
\draw[thin] (d) -- (-3.95,-0.55);
\draw[thin] (d) -- (-3.93,-1.5);
\draw[thin, dashed] (d) -- (-4,-1);
\draw[thin] (f) -- (3.95,-0.55);
\draw[thin] (f) -- (3.93,-1.5);
\draw[thin, dashed] (f) -- (4,-1);

\node[anchor=south] at (a) {$v_3$};
\node[anchor=south] at (b) {$v_1$};
\node[anchor=south] at (c) {$v_5$};
\node[anchor=north] at (d) {$v_4$};
\node[anchor=north] at (e) {$v_2$};
\node[anchor=north] at (f) {$v_6$};

\fill (-4,2.5) circle[radius=2pt];
\fill (-4,1.5) circle[radius=2pt];
\fill (4,2.5) circle[radius=2pt];
\fill (4,1.5) circle[radius=2pt];
\draw (4,-1.5) circle[radius=2pt];
\draw (4,-0.5) circle[radius=2pt];
\draw (-4,-1.5) circle[radius=2pt];
\draw (-4,-0.5) circle[radius=2pt];

\node[anchor=east] at (-4,2.5) {$D(A_0)$};
\node[anchor=east] at (-4,1.5) {$D(A_{p-1})$};
\node[anchor=west] at (4,2.5) {$D(B_0)$};
\node[anchor=west] at (4,1.5) {$D(B_{p-1})$};
\node[anchor=east] at (-4,-1.5) {$D(A_0^{-1})$};
\node[anchor=east] at (-4,-0.5) {$D(A_{p-1}^{-1})$};
\node[anchor=west] at (4,-1.5) {$D(B_0^{-1})$};
\node[anchor=west] at (4,-0.5) {$D(B_{p-1}^{-1})$};
\end{tikzpicture}
\caption{The skeleton $\Sigma_F$ of the fundamental domain $F$}
\label{fig:FunDom}
\end{figure}
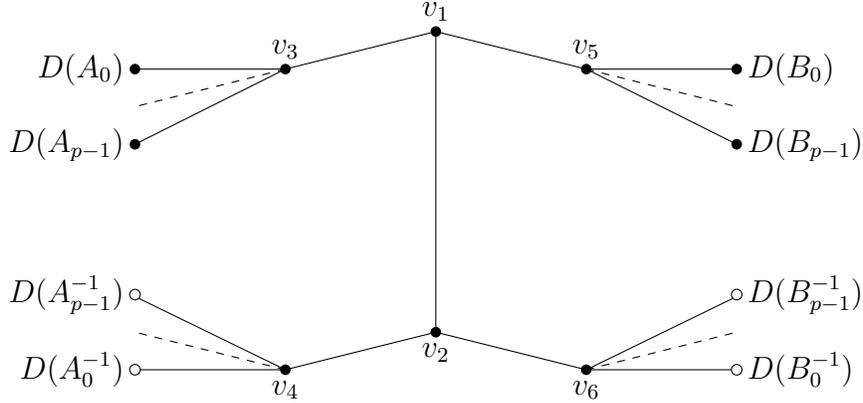

Let us call $Y$ the Mumford curve uniformized by $\Gamma$.
By the second part of Theorem \ref{thm:unif}, the skeleton $\Sigma_Y$ of $Y^{\mathrm{an}}$ is obtained by pairwise identifying the ends of the skeleton of of the fundamental domain $F$, as depicted in Figure \ref{fig:SkeletonEx}.
Since $Y$ has stable reduction, we have that $\Sigma_Y$ is also the dual graph of the special fiber $\mc{Y}^{\stab}_k$ of the stable model $\mc{Y}^{\stab}$ of $Y$.
We deduce that $\mc{Y}^{\stab}_k$ consists of six irreducible components.
For every vertex $v_i$ of such a graph, we denote by $V_i$ the corresponding irreducible component of $\mc{Y}^{\stab}_k$.
\begin{figure}[h]
\centering
\begin{tikzpicture}[
  level distance=3cm]
  \coordinate (a) at (-2,2);
  \coordinate (b) at (0,3);
   \coordinate (c) at (2, 2);
   \coordinate (d) at (-2, -1);
   \coordinate (e) at (0, -2);
   \coordinate (f) at (2, -1);
\draw[thin] (b) -- (e);
\draw[thin] (a) to[out=90,in=90] (b);
\draw[thin] (b) to[out=90,in=90] (c);
\draw[thin] (d) to[out=-90,in=-90] (e);
\draw[thin] (e) to[out=-90,in=-90] (f);
\draw[thin, dashed] (a) -- (d);
\draw[thin] (a) to[out=-120,in=120] (d);
\draw[thin] (a) to[out=-60,in=60] (d);
\draw[thin, dashed] (c) -- (f);
\draw[thin] (c) to[out=-120,in=120] (f);
\draw[thin] (c) to[out=-60,in=60] (f);
\fill (a) circle[radius=2pt];
\fill (b) circle[radius=2pt];
\fill (c) circle[radius=2pt];
\fill (d) circle[radius=2pt];
\fill (e) circle[radius=2pt];
\fill (f) circle[radius=2pt];
\node[rotate=90] at (-3,0) {$p$ edges};
\node[rotate=-90] at (3,0) {$p$ edges};
\node[anchor=east] at (a) {$v_3$};
\node[anchor=east] at (b) {$v_1$};
\node[anchor=east] at (c) {$v_5$};
\node[anchor=east] at (d) {$v_4$};
\node[anchor=east] at (e) {$v_2$};
\node[anchor=east] at (f) {$v_6$};
\end{tikzpicture}
\caption{The dual graph of $\mc{Y}^{\stab}_k$}
\label{fig:SkeletonEx}
\end{figure}

Since the points $v_i$ are fixed by the action of $\Gal(L/K)$ on $\panL$, the irreducible components $V_i$ are also fixed by the action of $\Gal(L/K)$ on $\mc{Y}^{\stab}_k$.
This action cyclically permutes the double points of the intersections $V_3 \cap V_4$ and $V_5 \cap V_6$. 

\begin{remark}\label{rmk:arithmeticform}
Proposition \ref{prop:GeneralExamplesDescent} shows that there is a $K$-curve $X$ whose base change to $L$ is isomorphic to $Y$. 
By construction, the $K$-form $X$ of $Y$ is arithmetic in the sense of \S\ref{Sdescent}.
The fundamental domain $F$ is especially nice, since $g(F)=F$ for every $g\in \Gal(L/K)$.
\end{remark}

\begin{lemma}
The curve $X$ has a $K$-rational point, and the extension $L/K$ is the minimal one over which $X$ acquires semi-stable reduction.
\end{lemma}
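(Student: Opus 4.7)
The plan is to adapt the arguments for Lemma \ref{Lrationalpoint} and Proposition \ref{Pminfield} from the $p=2$ example to the present setting.

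For the $K$-rational point, I would use the point $0 \in \proj^1(K) \subseteq \mf{D}$. I need to check that $0$ lies in $\mf{D}$, i.e., not in any of the discs $D^+(A_i^{\pm 1})$ or $D^+(B_i^{\pm 1})$. Since $|\sigma^i(\zeta)| = 1$ and $|\tau^i(\alpha)| = 1$, the inequalities $|0 - \sigma^i(\zeta)| = 1 > |\pi_K|^{1+\tfrac{1}{p}}$ and $|0 - \tau^i(\alpha)| = 1 > |\pi_K|^{1+\tfrac{2}{p}}$ show $0 \notin D^+(A_i) \cup D^+(B_i)$. For the other set of discs, $|0 - \pi_K\sigma^i(\zeta)| = |\pi_K|$ and $|0 - \pi_K\tau^i(\alpha)| = |\pi_K|$, which exceed $|\beta||\pi_K|^{-(1+\tfrac{1}{p})}$ and $|\beta'||\pi_K|^{-(1+\tfrac{2}{p})}$ respectively under the standing hypothesis $|\beta|, |\beta'| \le |\pi_K|^4$. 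Hence $0 \in \mf{D}$. Because $0$ is fixed by the natural $\Gal(L/K)$-action on $\proj^{1,\an}_L$, its image in $X^{\an} = Y^{\an}/\Gal(L/K)$ is a type $1$ point with residue field $K$, giving a $K$-rational point.

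For minimality of $L/K$, I would invoke Remark \ref{Rfaithful}, which reduces the statement to verifying that $\Gal(L/K) \cong \ints/p \times \ints/p$ acts faithfully on $\mc{Y}^{\stab}_k$. The approach is to examine the induced action on the components $V_3$ and $V_5$ of $\mc{Y}^{\stab}_k$ (corresponding to $v_3 = \eta_{\zeta,r}$ and $v_5 = \eta_{\alpha, r^{1+1/p}}$). The relations $\sigma(A_i) = A_{i+1}$ and $\tau(A_i) = A_i$ (and similarly $\tau(B_i) = B_{i+1}$, $\sigma(B_i) = B_i$) show that the $p$ smooth points $\sigma^i(\zeta) \in V_3$ where $V_3$ meets $V_4$ are cyclically permuted by $\sigma$ and fixed by $\tau$, while the $p$ smooth points $\tau^i(\alpha) \in V_5$ where $V_5$ meets $V_6$ are cyclically permuted by $\tau$ and fixed by $\sigma$. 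Therefore $\sigma$ acts nontrivially on $V_3$ and $\tau$ acts nontrivially on $V_5$, yielding a faithful $\Gal(L/K)$-action on $\mc{Y}^{\stab}_k$.

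The main technical point, and the step most prone to error, is verifying the cross-actions: that $\sigma$ fixes each $\tau^i(\alpha)$ (so that $\sigma$ acts trivially on $V_5$) and that $\tau$ fixes each $\sigma^i(\zeta)$. This requires the explicit computation $\sigma(\tau^i(\alpha)) = s + \sigma(\zeta^{ip})\sqrt[p]{\pi_K} = s + \zeta^{ip(p+1)}\sqrt[p]{\pi_K} = s + \zeta^{ip}\sqrt[p]{\pi_K} = \tau^i(\alpha)$, using that $\zeta$ has order $p^2$ so $\zeta^{ip^2} = 1$, together with $\sigma(\sqrt[p]{\pi_K}) = \sqrt[p]{\pi_K}$ (which follows from $\sigma(\alpha) = \alpha$). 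The analogous check for $\tau$ on $\sigma^i(\zeta)$ is immediate since $\tau(\zeta) = \zeta$. Once these identities are established, the two nontrivial generators act on disjoint components, so no nontrivial element of $\ints/p \times \ints/p$ acts as the identity, and faithfulness follows.
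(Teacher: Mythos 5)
Your proposal is correct and follows essentially the same route as the paper: the paper likewise observes that $0$ lies in the Galois-stable fundamental domain $F$ and descends to a $K$-rational point, and proves minimality by reducing (via Remark~\ref{Rfaithful}) to faithfulness of the $\Gal(L/K)$-action on $\mc{Y}^{\stab}_k$, checked on the double points of $V_3\cap V_4$ (moved by $\sigma$, fixed by $\tau$) and of $V_5\cap V_6$ (moved by $\tau$, fixed by $\sigma$). One cosmetic slip: the points of $V_3\cap V_4$ and $V_5\cap V_6$ are nodes of the special fiber rather than smooth points, but this does not affect your argument.
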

\begin{proof}
By Remark \ref{rmk:arithmeticform}, it is sufficient to find a point of $F$ which is fixed by the arithmetic action of $\Gal(L/K)$.
It is easy for instance to see that $0\in F$, which then descends to a
$\Gal(L/K)$-fixed point of $Y$, giving rise to a rational point of
$X$.

The second part of the proof is analogous to that of Proposition~\ref{Pminfield}: if $X$ attained semi-stable reduction over a Galois subextension $K \subseteq M \subsetneq L$, then the $\Gal(L/K)$ action on the special fiber $\mc{Y}^{\stab}_k$ of the stable model $\mc{Y}^{\stab}$ of $Y$ would factor through $\Gal(M/K)$, so we have to show that $\Gal(L/K)$ acts faithfully on $\mc{Y}^{\stab}_k$.
To prove this, note that $\sigma$ acts nontrivially on the points of $V_3\cap V_4$, that are in turn fixed by $\tau$.
Similarly, $\tau$ acts nontrivially on the points of $V_5\cap V_6$, that are in turn fixed by $\tau$.
Then, every element of $\Gal(L/K)$ acts nontrivially on at least one closed point of $\mc{Y}^{\stab}$.
\end{proof}

Let us now define a model $\mc{X}$ of $X$ that we can use to apply the results of \S \ref{Sgeneralities}.
As in that section, we let $\mc{Y}'$ be the semi-stable model of $Y$ associated with the set $S_Y$ of multiplicity 1 points in the skeleton of $Y$.
Then, we define $\mc{Y}' \to \mc{Y}''$ to be the blow down obtained by contracting the components with minimal field of definition equal to $L$.
From the labeling above, we deduce that none of the components $V_i$ for $i=1,\dots, 6$ gets contracted in this way, and hence $\mc{Y}''$ is an admissible blow-up of $\mc{Y}^{\stab}$, and in particular it is again a semi-stable model of $Y$.
As in \S \ref{Sgeneralities}, we set $\mc{X}= \mc{Y}'' / \Gal(L/K)$.

\begin{lemma}[cf.\ Lemma~\ref{Lallcomponents}]\label{lem:generalXmult}
All components of $\mc{X}_k$ have multiplicity dividing $p$.
\end{lemma}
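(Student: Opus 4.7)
The plan is to repeat essentially verbatim the argument used in the proof of Lemma~\ref{Lallcomponents}, with $\ints/2 \times \ints/2$ replaced by $\ints/p \times \ints/p$. By the very definition of the blow-down $\mc{Y}' \to \mc{Y}''$, every irreducible component of the special fiber $\mc{Y}''_k$ corresponds (through the uniformizing cover $p \colon \mf{D} \to Y^{\mathrm{an}}$ and Definition~\ref{def:defined}) to a type 2 point of $Y^{\mathrm{an}}$ whose minimal field of definition is a \emph{proper} subfield $M$ of $L$ containing $K$.

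Next I would invoke the subgroup structure of $\Gal(L/K) \cong \ints/p \times \ints/p$: its proper subgroups have order $1$ or $p$, so via the Galois correspondence the proper subfields $M$ with $K \subseteq M \subsetneq L$ satisfy $[M:K] \in \{1,\, p\}$. In either case $[M:K]$ divides $p$.

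Finally, I would apply Lemma~\ref{lem:multiplicities} to each such component: the multiplicity of its image under the canonical projection $\pi \colon Y^{\mathrm{an}} \to X^{\mathrm{an}}$ divides $[M:K]$, and hence divides $p$. Since $\mc{X} = \mc{Y}''/\Gal(L/K)$, every component of $\mc{X}_k$ arises as one of these images, and the desired conclusion follows. I do not anticipate any essential obstacle: the whole argument is a three-line deduction combining the definition of $\mc{Y}''$, a tiny piece of Galois theory, and Lemma~\ref{lem:multiplicities}. The only bookkeeping worth double-checking is that the choice of $\mc{Y}''$ genuinely retains only components defined over a proper subfield of $L$, which is immediate from the blow-down having been defined to contract exactly the components with minimal field of definition $L$.
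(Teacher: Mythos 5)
Your proposal is correct and follows essentially the same route as the paper: the paper's proof is exactly the observation that $\mc{Y}''$ retains only components whose minimal field of definition is a proper subfield of $L$, combined with Lemma~\ref{lem:multiplicities}. Your added remark that proper subfields $M$ of $L$ satisfy $[M:K] \in \{1, p\}$ because $\Gal(L/K) \cong \Z/p\Z \times \Z/p\Z$ just makes explicit a step the paper leaves implicit.
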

\begin{proof}
This follows from Lemma \ref{lem:multiplicities}, as $\mc{Y}''$ contains only components whose minimal field of definition is strictly contained in $L$.
\end{proof}

\begin{prop}[cf.\ Proposition~\ref{Pweakwild}]\label{prop:generalXmult}
Any principal component $E$ of $\mc{X}_k^{\reg}$ that is exceptional in the desingularization of the model $\mc{X}$ has multiplicity dividing $p$.
\end{prop}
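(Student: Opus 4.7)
The plan is to reduce immediately to Proposition~\ref{Pexceptionalmultiplicity1}, exploiting the fact that in this construction the fundamental domain $F$ is globally preserved by $\Gal(L/K)$. Let $x \in \mc{X}_k$ be the image of $E$, pick a preimage $y$ of $x$ in $\mc{Y}''$, and let $\tilde y \in \ol F \subseteq \mc{D}''$ be the unique lift of $y$ into the reduction of the fundamental domain, as in Lemma~\ref{LtransfertoY}.

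The key observation, already recorded in Remark~\ref{rmk:arithmeticform}, is that $g(F) = F$ for every $g \in \Gal(L/K)$. This is visible from the explicit description of the generators in Proposition~\ref{prop:GeneralExamplesDescent}: the automorphism $\sigma$ permutes the discs $D^\pm(A_i)$ cyclically (because $\sigma(A_i)=A_{(i+1) \bmod p}$) while fixing each $D^\pm(B_i)$ (because $\sigma$ fixes $\alpha$), and $\tau$ plays the symmetric role. Hence the $\Gal(L/K)$-action on $\mc{D}''$ induced from the arithmetic action on $\proj^{1,\an}_L$ via the formal model construction of Lemma~\ref{lem:formallift} preserves $\ol F$, so in particular $\Gal(L/K)\tilde y \subseteq \ol F$.

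With this hypothesis satisfied I may apply Proposition~\ref{Pexceptionalmultiplicity1} at $x$ to conclude that $m_E$ divides the integer $d$ equal to the lcm of the multiplicities of the irreducible components of $\mc{X}_k$ through $x$. By Lemma~\ref{lem:generalXmult} each such multiplicity divides $p$, so $d \mid p$, and therefore $m_E \mid p$.

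The only point requiring real care will be the invariance statement $g(F)=F$, and more precisely that this invariance of the analytic domain genuinely passes to invariance of the reduction $\ol F$ inside the special fiber of $\mc{D}''$. This is the main contrast with the $g=2,\, p=2$ example of the previous subsection, where $\sigma$ swapped $\eta_{\sqrt 2,2}$ and $\eta_{-\sqrt 2,2}$ and $F$ was not Galois-stable, forcing the separate weak-wild treatment via Proposition~\ref{Pexceptionalmultiplicity2}; here the symmetric design of the Schottky figure in \S\ref{Sexample} eliminates that case entirely, so Proposition~\ref{Pexceptionalmultiplicity1} alone suffices.
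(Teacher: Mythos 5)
Your proposal is correct and follows essentially the same route as the paper: both arguments use the $\Gal(L/K)$-invariance of the fundamental domain $F$ (Remark~\ref{rmk:arithmeticform}) to guarantee that the orbit of $\tilde{y}$ stays in $\ol{F}$, and then conclude by combining Proposition~\ref{Pexceptionalmultiplicity1} with Lemma~\ref{lem:generalXmult}. Your extra verification of $g(F)=F$ from the explicit disc permutations, and the contrast with the $p=2$ case where Proposition~\ref{Pexceptionalmultiplicity2} was needed, are accurate but amount to spelling out what the paper already records.
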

\begin{proof}
Let $x \in \mc{X}$ be the image of $E$ under the desingularization $\mc{X}^{\reg} \to \mc{X}$.
Since the fundamental domain $F$ is fixed by the action of $\Gal(L/K)$ on $\proj^{1, \mathrm{an}}_{L}$, if $\tilde{y} \in \mc{D}''$ lies above $x$ as in
Lemma~\ref{LtransfertoY}, then $\Gal(L/K)\tilde{y}$ lies in the
reduction $\ol{F}$ of $F$.  So by
Proposition~\ref{Pexceptionalmultiplicity1} and
Lemma~\ref{lem:generalXmult}, the multiplicity of $E$ divides $p$.
\end{proof}

\begin{corollary}[cf.\ Corollary~\ref{Cexceptional}]\label{cor:exceptional}
Every principal component of $\mc{X}_k^{\reg}$ has multiplicity dividing $p$.
\end{corollary}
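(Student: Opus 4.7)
The plan is to mimic exactly the argument used in the $p=2$ case for Corollary~\ref{Cexceptional}. Every irreducible component of $\mc{X}_k^{\reg}$ falls into one of two disjoint classes: the strict transform of a component of $\mc{X}_k$ under the resolution $pr\colon \mc{X}^{\reg}\to\mc{X}$, or a component lying on the exceptional divisor of $pr$. These two classes between them account for all principal components, so it suffices to check the divisibility statement on each class separately.

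First I would handle the non-exceptional case. A principal component $E$ of $\mc{X}_k^{\reg}$ that is not exceptional is the strict transform of some component $V$ of $\mc{X}_k$, and the multiplicity of $E$ equals the multiplicity $m_V$ of $V$ (the strict transform and its image agree as divisors away from the exceptional locus). By Lemma~\ref{lem:generalXmult}, $m_V$ divides $p$, so $m_E \mid p$.

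Next I would handle the exceptional case, which is the substantive ingredient and has already been isolated: Proposition~\ref{prop:generalXmult} asserts precisely that any principal component $E$ of $\mc{X}_k^{\reg}$ lying on the exceptional divisor of $pr$ has multiplicity dividing $p$. Combining the two cases, every principal component has multiplicity dividing $p$, and the corollary follows. There is no hard step remaining, as all of the genuine content has been pushed into Lemma~\ref{lem:generalXmult} (control of multiplicities on $\mc{X}$ itself, coming from the field-of-definition Lemma~\ref{lem:multiplicities}) and Proposition~\ref{prop:generalXmult} (control of multiplicities introduced by resolution, coming from the Mac Lane valuation machinery via Proposition~\ref{Pexceptionalmultiplicity1}).
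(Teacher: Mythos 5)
Your proposal is correct and follows the paper's argument exactly: the paper also proves this corollary by observing that every principal component of $\mc{X}_k^{\reg}$ is either the strict transform of a component of $\mc{X}_k$ (handled by Lemma~\ref{lem:generalXmult}) or lies on the exceptional divisor of the resolution (handled by Proposition~\ref{prop:generalXmult}). Nothing further is needed.
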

\begin{proof}
This is a direct consequence of Lemma \ref{lem:generalXmult} and Proposition \ref{prop:generalXmult}.
\end{proof}

\section{Divisibility of the stabilization index}\label{Sdivisibility}

In this section, we adopt notation analogous to that of \S\ref{Sexample}.
Specifically, let $X$ be a
curve over $K$ of genus $\geq 1$ with potentially multiplicative reduction, realized over a
minimal Galois extension $L/K$.   Write $Y = X \times_K L$.  Then $Y$ is a Mumford curve with analytification
$\Gamma \backslash \mf{D}$, where $\Gamma$ is the associated Schottky
group and $\mf{D} = \proj^{1,\an}_L \setminus \mc{L}$ with $\mc{L}$ the
limit set of $\Gamma$.  Let $\mc{Y}'$ be the model of $Y$
associated to the full set of multiplicity $1$ points in the skeleton
of $\mf{D}$, as in Proposition~\ref{prop:mult1}.
The morphism $\varpi \colon \mc{D}'
\to \mc{Y}'$ is the morphism of formal schemes coming from the
uniformization map $\mf{D} \to Y^{\an}$ as in Lemma~\ref{lem:formallift}. 
Note that the $L/K$-semilinear action of $\Gal(L/K)$ on $Y$ extends
to $\mc{Y}'$, because $\mc{Y}'$ is defined
canonically in terms of $Y$.  If $g \in \Gal(L/K)$, write $\sigma_g$ for the corresponding automorphism
of $\mc{Y}'$.

Now, let $\mc{X} = \mc{Y}' / \Gal(L/K)$, write $\mc{X}_k$
for its special fiber, and let $\mc{X}^{\reg} \to \mc{X}$ be its minimal regular
snc-resolution.  The special fiber of $\mc{X}^{\reg}$ is denoted $\mc{X}_k^{\reg}$.

%

Define $e(\mc{X}^{\reg})$ to be the $\lcm$ of the multiplicities of the principal
components of $\mc{X}_k^{\reg}$. By \cite[top of p.\
46]{HalleNicaise}, $e(X) \mid e(\mc{X}^{\reg})$. The goal of this
section is to prove Theorem~\ref{Tmain}, by showing that $e(\mc{X}^{\reg}) \mid [L:K]$, which implies $e(X) \mid [L:K]$.  We start with some preparatory results.

\begin{lemma}\label{Ldescent}
Every $L/K$-semilinear automorphism of $Y$ lifts to
an $L/K$-semilinear automorphism of $\mf{D}$,
and any such lift in turn extends to a unique $L/K$-semilinear automorphism on both
$\proj^1_L$ and on $\mc{D}'$.
\end{lemma}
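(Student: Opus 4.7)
The plan is to prove the three assertions of the lemma sequentially, in each case leveraging a result already established in \S\ref{Smumford}. For the lift from $Y$ to $\mf{D}$: any $\sigma$-semilinear automorphism $\delta$ of $Y$ is in particular a $K$-linear automorphism of $Y^{\an}$, so Proposition~\ref{prop:generallifting} produces a $K$-linear lift $\widetilde{\delta}$ along the universal cover $p \colon \mf{D} \to Y^{\an}$. Inserting $\widetilde{\delta}$ into the commutative diagram (\ref{diagram:unicover}) immediately exhibits it as $\sigma$-semilinear.

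For the extension to $\proj^{1}_L$: Proposition~\ref{prop:extensionofliftings} extends $\widetilde{\delta}$ to a $\sigma$-semilinear automorphism of $\proj^{1,\an}_L$ by assigning limit values on $\mc{L}$. Uniqueness is automatic since $\mf{D}$ is open and dense in $\proj^{1,\an}_L$ and any $\sigma$-semilinear automorphism is continuous; passage between the algebraic $\proj^1_L$ and its analytification is harmless by the GAGA identification $\Aut(\proj^1_L) \cong \Aut(\proj^{1,\an}_L)$.

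For the extension to $\mc{D}'$: the semi-stable model $\mc{Y}'$ is canonically built (Proposition~\ref{prop:mult1}) from the set $\calP$ of multiplicity-$1$ points on the canonical skeleton $\Sigma_Y \subseteq Y^{\an}$, so $\delta$ permutes $\calP$ and induces a $\sigma$-semilinear automorphism of $\mc{Y}'$. The formal model $\mc{D}'$ is in turn obtained by pulling back along $p$ the formal affinoid covering of $Y^{\an}$ associated with $\mc{Y}'$ (see the construction preceding Lemma~\ref{lem:formallift}), so $\mc{D}'$ corresponds, in the sense of Remark~\ref{Rinfinitemodels}, to the set $p^{-1}(\mc{Q}_{\mc{Y}'})$ of type 2 points of $\mf{D}$. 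Since $\widetilde{\delta}$ covers $\delta$ under $p$, it permutes this set of type 2 points together with the pulled-back affinoid covering. By imitating the argument of Lemma~\ref{lem:formallift} -- specifically, lifting the induced map of special fibers to formal affine opens and then taking generic fibers -- one upgrades $\widetilde{\delta}$ to a $\sigma$-semilinear automorphism of $\mc{D}'$. Uniqueness follows because a morphism of formal schemes flat over $\Spf(\mc{O}_L)$ is determined by its generic fiber.

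The main obstacle is in the third step: one must ensure the formal structures are genuinely compatible with the $\sigma$-twist of the base $\mc{M}(L) \to \mc{M}(L)$, even though the affinoid covering used to build $\mc{D}'$ was not chosen $\widetilde{\delta}$-equivariantly. The fix is to pass to a common $\widetilde{\delta}$-invariant refinement (which exists because both the covering and its $\widetilde{\delta}$-translate arise from the same canonical data on $\mc{Y}'$) and then observe that refinement does not alter the associated formal model; the bookkeeping in this equivariant variant of Lemma~\ref{lem:formallift} is the only technical point.
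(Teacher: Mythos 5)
Your argument is correct and follows essentially the same route as the paper's own proof, which simply cites Proposition~\ref{prop:generallifting} together with diagram~(\ref{diagram:unicover}) for the lift to $\mf{D}$, Proposition~\ref{prop:extensionofliftings} for the extension to $\proj^1_L$, and the canonicity of the construction of $\mc{D}'$ from $Y$ for the extension to the formal model. Your additional details --- the uniqueness via density of $\mf{D}$ and determination of a formal morphism by its generic fiber, and the equivariance of the affinoid covering used to build $\mc{D}'$ --- are exactly the points the paper leaves implicit under ``canonically constructed,'' and they are handled correctly.
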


\begin{proof}
The first assertion follows from Proposition~\ref{prop:generallifting}
and (\ref{diagram:unicover}).  The second assertion follows from
Proposition~\ref{prop:extensionofliftings} and the fact that $\mc{D}'$
is canonically constructed from $Y$.  
\end{proof}

\begin{remark}
Compare Lemma~\ref{Ldescent} above with Remark~\ref{Rmumforddescent},
where the goal is to lift an entire $L/K$-semilinear group action, not
just a single automorphism. This is not always possible, but we
give a sufficient criterion for the lifting of such an action in the following proposition.
\end{remark}

\begin{prop}\label{Lfixedcomponent}
  Suppose there is a closed point $y$ on the special fiber
   of $\mc{Y}'$
that is preserved by the action of $\Gal(L/K)$.  Then the canonical
$\Gal(L/K)$-action on $\mc{Y}'$ lifts
to an $L/K$-semilinear $\Gal(L/K)$-action on $\mc{D}'$
as well as on $\proj^1_L$.  Furthermore, the lift can be chosen to
preserve an arbritrary
point of $\mc{D}'$ above $y$.

\end{prop}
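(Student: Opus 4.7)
The strategy is to use Lemma~\ref{Ldescent} to lift each $\sigma_g$ individually and then exploit the fixed closed point $y$ to normalize these lifts so that they respect the group law in $\Gal(L/K)$. The key tool is the freeness of the $\Gamma$-action on $\mc{D}'$: since $\varpi \colon \mc{D}' \to \hat{\mc{Y}}'$ is a local isomorphism whose deck transformation group is $\Gamma$ (Lemma~\ref{lem:formallift}), any element of $\Gamma$ that fixes a point of $\mc{D}'$ must act trivially on a neighborhood, and hence must be the identity.

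The plan is as follows. First, fix the desired preimage $\tilde{y} \in \mc{D}'$ of $y$ under $\varpi$, which is the point we want to stabilize. Second, for each $g \in \Gal(L/K)$, Lemma~\ref{Ldescent} yields some $L/K$-semilinear lift of $\sigma_g$ to $\mc{D}'$. Since $\sigma_g(y) = y$, this lift must send $\tilde{y}$ to another preimage of $y$; by the free and transitive action of $\Gamma$ on the fiber above $y$, that preimage equals $\gamma_g \tilde{y}$ for a unique $\gamma_g \in \Gamma$. Third, replace the chosen lift by its composition with $\gamma_g^{-1}$: the result is still a semilinear lift of $\sigma_g$ (post-composing with a deck transformation leaves the projection to $\mc{Y}'$ unchanged), but now fixes $\tilde{y}$. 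Call this normalized lift $\tilde{\sigma}_g$.

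Fourth, to verify that $g \mapsto \tilde{\sigma}_g$ is a group homomorphism, observe that $\tilde{\sigma}_g \circ \tilde{\sigma}_h$ is itself an $L/K$-semilinear lift of $\sigma_{gh}$ which fixes $\tilde{y}$; since any two such lifts differ by an element of $\Gamma$ and the only element of $\Gamma$ fixing $\tilde{y}$ is the identity, we conclude $\tilde{\sigma}_g \circ \tilde{\sigma}_h = \tilde{\sigma}_{gh}$. Fifth, Lemma~\ref{Ldescent} extends each $\tilde{\sigma}_g$ uniquely to a semilinear automorphism of $\proj^1_L$, and this uniqueness propagates the group law to $\proj^1_L$ as well. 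The main subtlety is the freeness of the $\Gamma$-action on closed points of $\mc{D}'$ (as opposed to the generic fiber $\mf{D}$, where freeness is built into Theorem~\ref{thm:unif}); this follows from the local isomorphism property of $\varpi$, and it is precisely what transforms the existence of a fixed closed point into a bona fide lifted group action.
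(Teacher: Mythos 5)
Your proof is correct and follows essentially the same route as the paper: lift each $\sigma_g$ via Lemma~\ref{Ldescent}, normalize by an element of $\Gamma$ so the lift fixes a chosen $\tilde{y}$ above $y$, use freeness of the $\Gamma$-action to make this normalized lift unique and hence compatible with composition, and then extend to $\proj^1_L$ (the paper invokes Proposition~\ref{prop:extensionofliftings} for this last step). Your added remark justifying freeness of $\Gamma$ on closed points of $\mc{D}'$ via the local-isomorphism property of $\varpi$ is a fine elaboration of a step the paper simply asserts.
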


\begin{proof}
By Proposition~\ref{prop:extensionofliftings}, proving the first
statement reduces to showing
the $\Gal(L/K)$-action on $\mc{Y}'$ lifts to $\mc{D}'$.  Pick a point $\tilde{y}$ of
$\mc{D}'$ lying above $y$.  For every $\sigma \in \Gal(L/K)$,
choose a lift $\tilde{\sigma}$ of the action of $\sigma$ to $\mc{D}'$
as in Lemma~\ref{Ldescent}. After composing $\tilde{\sigma}$ with an element
of $\Gamma$, we may assume that $\tilde{\sigma}$ fixes $\tilde{y}$.
Indeed, since $\Gamma$ acts freely on $\mc{D}'$, this property
determines $\tilde{\sigma}$ uniquely.  

Now, for all $\sigma, \tau \in
\Gal(L/K)$, we have  
$\tilde{\sigma}\tilde{\tau}$ preserves $\tilde{y}$.  Since
$\tilde{\sigma} \tilde{\tau}$ is a lift of $\sigma \tau$, the uniqueness of
lifts preserving $\tilde{y}$ implies that $\tilde{\sigma}\tilde{\tau} =
\widetilde{\sigma\tau}$.  Thus the entire $\Gal(L/K)$-action lifts to
$\mc{D}'$, proving the first statement.  The point $\tilde{y}$ is fixed by the lift, proving the
second statement.
\end{proof}

\begin{lemma}\label{Lhilb90}
Every $L/K$-semilinear action on $\proj^1_L$ is, up to a change of
coordinate, the purely arithmetic action given by fixing the
coordinate and acting on the coefficients.
\end{lemma}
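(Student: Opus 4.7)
The plan is to reinterpret the statement cohomologically and then invoke Hilbert's Theorem~90 together with the vanishing of the Brauer group of $K$.

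First, I will translate the geometric claim into a cohomological one. Let $\sigma_{\mathrm{ar}}$ denote the purely arithmetic action of $\sigma \in \Gal(L/K)$ on $\proj^1_L$, and suppose we are given an arbitrary $L/K$-semilinear action, written $\sigma \mapsto \rho(\sigma)$. Each $\rho(\sigma)$ differs from $\sigma_{\mathrm{ar}}$ by an $L$-linear automorphism of $\proj^1_L$, so it can be written uniquely as $\phi_\sigma \circ \sigma_{\mathrm{ar}}$ with $\phi_\sigma \in \PGL_2(L)$. The requirement $\rho(\sigma\tau) = \rho(\sigma) \circ \rho(\tau)$, combined with the identity $\sigma_{\mathrm{ar}} \circ \phi_\tau \circ \sigma_{\mathrm{ar}}^{-1} = \sigma(\phi_\tau)$, where $\sigma(\phi_\tau)$ denotes the matrix obtained from $\phi_\tau$ by applying $\sigma$ coefficient-wise, yields the $1$-cocycle condition
\[
\phi_{\sigma\tau} = \phi_\sigma \cdot \sigma(\phi_\tau).
\]
Conjugating the whole action by $g \in \PGL_2(L)$, which amounts to a change of coordinate on $\proj^1_L$, replaces the cocycle $\phi$ by the cohomologous one $\sigma \mapsto g^{-1} \phi_\sigma \sigma(g)$, while the purely arithmetic action itself corresponds to the trivial cocycle. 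Thus the lemma is equivalent to the vanishing of $H^1(\Gal(L/K), \PGL_2(L))$.

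Next, I will obtain this vanishing from the central extension $1 \to L^\times \to \GL_2(L) \to \PGL_2(L) \to 1$, which yields an exact sequence of pointed cohomology sets
\[
H^1(\Gal(L/K), \GL_2(L)) \longrightarrow H^1(\Gal(L/K), \PGL_2(L)) \longrightarrow H^2(\Gal(L/K), L^\times) = \mathrm{Br}(L/K).
\]
The left-hand set is trivial by the generalised Hilbert Theorem~90 for $\GL_n$, so the middle set embeds into $\mathrm{Br}(L/K) \subseteq \mathrm{Br}(K)$. Since $K$ is a complete discretely valued field with algebraically closed residue field $k$, Serre's theorem on cohomological dimension gives $\mathrm{cd}(\Gal(K^{\sep}/K)) \leq \mathrm{cd}(k) + 1 = 1$, and hence $\mathrm{Br}(K) = 0$. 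Combining these facts yields $H^1(\Gal(L/K), \PGL_2(L)) = 0$, which completes the proof.

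The argument is essentially bookkeeping once the cocycle identity is verified; the only substantive external input is the triviality of $\mathrm{Br}(K)$. Geometrically, one can rephrase this as the statement that any descent of $\proj^1_L$ under an $L/K$-semilinear action is a Brauer--Severi conic over $K$, which is necessarily trivial and hence isomorphic to $\proj^1_K$.
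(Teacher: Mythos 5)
Your proof is correct and follows essentially the same route as the paper's: both interpret the semilinear action as a class in $H^1(\Gal(L/K),\PGL_2(L))$, use Hilbert's Theorem 90 for $\GL_2$ together with the central extension $1 \to L^\times \to \GL_2(L) \to \PGL_2(L) \to 1$, and conclude from the vanishing of $H^2(\Gal(L/K),L^\times)$ (i.e.\ of the Brauer group of the complete discretely valued field $K$ with algebraically closed residue field) that the cocycle is a coboundary, hence trivialized by a change of coordinate. The only difference is presentational: you make the cocycle condition and the reduction to $H^1(\Gal(L/K),\PGL_2(L))=1$ explicit, whereas the paper works with the induced action on the function field $L(x)$ and exhibits the coordinate change $y = xB^{-1}$ directly.
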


\begin{proof}
This is essentialy the proof of \cite[Corollary 4.11]{ObusWewers}; 
we reproduce it here and slightly correct it.  Consider, equivalently,
an $L/K$-semilinear action $\rho$ of $\Gal(L/K)$ on the function field
$L(x)$. It is represented by a cocycle in $H^1(\Gal(L/K), PGL_2(L))$, where $\sigma
\in \Gal(L/K)$ is sent to $\alpha \in PGL_2(L)$ such that
$\rho(\sigma)(x) = \alpha(x)$.  By Hilbert's
Theorem 90 (see, e.g., \cite[X, Proposition 3]{Serre79}), this
cohomology set injects into $H^2(G, L^{\times})$, which is trivial by
\cite[Corollary and Example (c) on p.\ 80]{Serre97}.  So the
action is given by a coboundary, and thus has the form $\sigma(x) =
xB^\sigma B^{-1}$ where $B \in PGL_2(L)$ is independent of $\sigma$.  Letting $y
= xB^{-1}$, we see that $g(y) = x(B^\sigma)^{-1}B^\sigma B^{-1} = y$ for all $\sigma
\in \Gal(L/K)$.  This proves the lemma.  
\end{proof}

Recall that we have the following diagram, where $p_1$ is the quotient
map and $p_2$ is the resolution:
   $$
   \xymatrixrowsep{0.8cm}
   \xymatrixcolsep{2cm}
    \xymatrix{
   \   & \mc{Y}' \ar[d]^{p_1} \\
      \mc{X}^{\reg} \ar[r]^-{p_2} & \mc{X} = \mc{Y}'/ \Gal(L/K)
    }
    $$

Since $\mc{Y}'$ has reduced special fiber, all irreducible components of the special fiber of $\mc{X}$ have
multiplicity dividing $[L:K]$.  The same is then true of their strict
transforms in $\mc{X}^{\reg}$.  Consequently, to prove Theorem~\ref{Tmain}, it suffices to show the
following result:

\begin{theorem}\label{Tatx}
Let $x$ be a closed point of $\mc{X}$.  Then every
principal irreducible component of the special fiber
$\mc{X}_k^{\reg}$ of $\mc{X}^{\reg}$ lying in
$p_2^{-1}(x)$ has multiplicity dividing $[L:K]$.
\end{theorem}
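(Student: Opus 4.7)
The plan is to reduce Theorem~\ref{Tatx} to a local computation on the quotient of a normal model of $\proj^1_L$ by an arithmetic action, where Proposition~\ref{Pmaclaneresolution} applies.

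First, I would pick a closed point $y$ of $\mc{Y}'$ with $p_1(y) = x$, set $D = \mathrm{Stab}_{\Gal(L/K)}(y)$, and put $M = L^D$, so that $|D| = [L:M]$ and $[M:K]\cdot|D| = [L:K]$. The standard formula for completed local rings of quotients by finite groups gives $\hat{\mc{O}}_{\mc{X}, x} \cong \hat{\mc{O}}_{\mc{Y}', y}^D$. The argument of Proposition~\ref{Lfixedcomponent} carries over verbatim when $\Gal(L/K)$ is replaced by $D$ (it uses only that $y$ is fixed by the acting group) and produces an $L/M$-semilinear $D$-action on $\mc{D}'$ fixing a chosen preimage $\tilde{y}$ of $y$. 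Extending via Proposition~\ref{prop:extensionofliftings} to an $L/M$-semilinear $D$-action on $\proj^1_L$ and then applying Lemma~\ref{Lhilb90}, a change of variable on $\proj^1_L$ makes this $D$-action purely arithmetic, so that $\proj^1_L/D \cong \proj^1_M$ in the new coordinate.

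Next, let $\Sigma$ be the finite set of components of $\mc{D}'_k$ passing through $\tilde{y}$ and $S \subset \mf{D}$ the corresponding set of type 2 points; each has multiplicity $1$ in $\proj^{1,\an}_L$ since $\mc{D}'$ is constructed from the semi-stable model $\mc{Y}'$. Let $\mc{Z}$ be the normal model of $\proj^1_L$ determined by $S$ via Proposition~\ref{prop:models}. Since $D$ fixes $\tilde{y}$ it stabilizes $\Sigma$ and $S$, so $\mc{Z}$ inherits the arithmetic $D$-action from $\proj^1_L$. Proposition~\ref{prop:semistable} combined with Lemma~\ref{Lsamelocalring} (applied to $\mc{D}'$, with generic fiber $\mf{D} = \proj^{1,\an}_L \setminus \mc{L}$, and the chosen $\Sigma$) provides $D$-equivariant isomorphisms
\[
\hat{\mc{O}}_{\mc{Y}', y} \,\cong\, \hat{\mc{O}}_{\mc{D}', \tilde{y}} \,\cong\, \hat{\mc{O}}_{\mc{Z}, \tilde{z}},
\]
where $\tilde{z}$ is the image of $\tilde{y}$ in $\mc{Z}$. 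Passing to $D$-invariants yields $\hat{\mc{O}}_{\mc{X}, x} \cong \hat{\mc{O}}_{\mc{Z}/D, \tilde{x}}$, where $\tilde{x}$ is the image of $\tilde{z}$ in $\mc{Z}/D$.

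Finally, a direct valuation-theoretic computation shows that a component $\bar V$ of $\mc{Z}/D$ lying above a multiplicity-$1$ component $V$ of $\mc{Z}$ with stabilizer $D_V \le D$ has multiplicity $[D:D_V]$ over $\mc{O}_M$, which divides $|D|$. Thus Proposition~\ref{Pmaclaneresolution} (applied with $K$ replaced by $M$) bounds the multiplicity over $\mc{O}_M$ of every principal component of the minimal regular resolution of $\mc{Z}/D$ by $|D|$. The local ring isomorphism identifies the minimal regular snc-resolution of $\mc{X}$ at $x$ with that of $\mc{Z}/D$ at $\tilde{x}$, and under this identification multiplicities over $\mc{O}_K$ equal $[M:K]$ times multiplicities over $\mc{O}_M$ (since $\pi_K = u\pi_M^{[M:K]}$ for a unit $u$). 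Therefore every principal component of $\mc{X}^{\reg}_k$ meeting $p_2^{-1}(x)$ has multiplicity dividing $[M:K]\cdot|D| = [L:K]$, proving the theorem. The main subtleties I anticipate are verifying the $D$-equivariance of the chain of local isomorphisms after the change of coordinate on $\proj^1_L$, and correctly accounting for the base change factor $[M:K]$ between multiplicities over $\mc{O}_M$ and $\mc{O}_K$.
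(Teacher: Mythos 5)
Your argument is correct in substance and is essentially the paper's proof, with the reduction to the stabilizer packaged differently. The common core is identical: lift the action of the stabilizer to $\mc{D}'$ and then to $\proj^1_L$ fixing a preimage $\tilde{y}$ (Proposition~\ref{Lfixedcomponent} together with Proposition~\ref{prop:extensionofliftings}), straighten it to the purely arithmetic action via Hilbert~90 (Lemma~\ref{Lhilb90}), transfer the completed local ring to a normal model of the projective line through Proposition~\ref{prop:semistable} and Lemma~\ref{Lsamelocalring}, and conclude with the Mac Lane bound of Proposition~\ref{Pmaclaneresolution} (i.e.\ Corollary~\ref{Carithmeticaction}). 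Where the paper first reduces to the full-inertia case via Proposition~\ref{Petale} --- an \'etale comparison with the intermediate quotient $\mc{Y}'/H$, the factor $[K':K]$ entering through the \'etale multiplicity computation --- you work directly with $D=\mathrm{Stab}_{\Gal(L/K)}(y)$ and $M=L^D$, identify $\hat{\mc{O}}_{\mc{X},x}\cong(\hat{\mc{O}}_{\mc{Y}',y})^D$, run the whole argument over $\mc{O}_M$, and rescale multiplicities by $[M:K]$ at the end. This is equivalent bookkeeping; your route trades the \'etale base change for the (standard, but worth stating) facts that invariants commute with completion when the residue extension is trivial, and that multiplicities and minimal resolutions are detected by completed local rings --- the paper relies on the latter as well, cf.\ Proposition~\ref{Pmult1to9}.

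One side claim is false as stated, though harmlessly so for your argument: the image in $\mc{Z}/D$ of a multiplicity-one component $V$ of $\mc{Z}$ need not have multiplicity $[D:D_V]$ over $\mc{O}_M$. For instance, the type 2 point given by the sup-norm on the closed disc of radius $|\pi_L|$ centered at $0$ is fixed by all of $D=\Gal(L/M)$, yet its image has multiplicity $[L:M]$ over $\mc{O}_M$, because the restricted valuation has value group $\frac{1}{[L:M]}\Z$ (the residue field of the component extends, so the generic ramification index is $1$ even though the stabilizer is all of $D$). What is true, and all you actually use, is that the value group of the restricted valuation lies between $|M^\times|$ and $|L^\times|$, so the image multiplicity divides $[L:M]=|D|$; this is exactly how the paper argues in Corollary~\ref{Carithmeticaction} and Lemma~\ref{lem:multiplicities}. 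With that substitution, your proof goes through.
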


In other words, we
can restrict attention to the exceptional divisors contracting to a closed point $x \in \mc{X}$.  For the rest of
\S\ref{Sdivisibility}, fix such a point $x$.

\begin{prop}\label{Petale}
  In order to prove Theorem~\ref{Tatx}, it suffices to assume that
  $p_1^{-1}(x)$ consists of a single point.  That is, 
  that $\Gal(L/K)$ acts on $p_1^{-1}(x)$ with full inertia. 
\end{prop}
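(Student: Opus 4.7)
My plan is to fix a point $y \in p_1^{-1}(x)$, let $H \subseteq \Gal(L/K)$ denote its stabilizer, and set $M := L^H$.  The strategy is to reduce the problem locally at $x$ to the analogous statement for the base change $X_M := X \times_K M$ equipped with the action of $H = \Gal(L/M)$.  In this new setup, $H$ fixes $y$ by construction, so $y$ is the unique preimage of its image $y^H$ in $\mc{Y}'/H$; this is precisely the ``single preimage'' case of Theorem~\ref{Tatx}, which is what we are permitted to assume.  Since $[L:M]$ divides $[L:K]$, the corresponding divisibility of multiplicities will then follow.

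The key technical ingredient is the local isomorphism
\[
\widehat{\mc{O}}_{\mc{X}, x} \;\cong\; \widehat{\mc{O}}_{\mc{Y}'/H,\, y^H}.
\]
To establish this, I would write the $\Gal(L/K)$-orbit of $y$ as $\{\sigma_1 y, \ldots, \sigma_r y\}$ for a transversal $\sigma_1, \ldots, \sigma_r$ of $\Gal(L/K)/H$, observe that completing $\mc{Y}'$ along $p_1^{-1}(x)$ yields the induced module $\mathrm{Ind}_H^{\Gal(L/K)} \widehat{\mc{O}}_{\mc{Y}', y}$, and invoke the standard identity $(\mathrm{Ind}_H^G N)^G = N^H$ together with the commutation of $\Gal(L/K)$-invariants with completion along a Galois-invariant ideal.

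From this isomorphism, the minimal regular snc-resolution in a formal neighborhood of $x \in \mc{X}$ is identified with the one at $y^H \in \mc{Y}'/H$, with the same principal components and the same multiplicities.  Since $Y = X_L = (X_M)_L$, the Schottky uniformization and the canonical model $\mc{Y}'$ are unaffected by the change of ground field from $K$ to $M$, and the quotient $\mc{Y}'/H$ plays, for $X_M$, exactly the role that $\mc{X}$ plays for $X$.  Applying Theorem~\ref{Tatx} in the assumed single-preimage case to $X_M/M$ then bounds the multiplicity of each principal component of $\mc{X}^{\reg}_k$ meeting $p_2^{-1}(x)$ by $[L:M]$, hence by $[L:K]$.

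The main point requiring care is that $L/M$ need not be minimal among Galois extensions of $M$ over which $X_M$ acquires semi-stable reduction, so the setup of Theorem~\ref{Tatx} is not directly available for $(X_M, M)$ as stated.  I expect this to be purely cosmetic, since the construction of $\mc{Y}'$ from the Mumford uniformization is insensitive to what is taken as the ground field; but if needed one can further pass to the minimal subextension $L_M \subseteq L$ and apply the theorem there, noting that $[L_M:M]$ divides $[L:M]$, which divides $[L:K]$.
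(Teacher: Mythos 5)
Your overall strategy coincides with the paper's: pick $y \in p_1^{-1}(x)$, let $H$ be its stabilizer and $M = L^H$, and compare $\mc{X} = \mc{Y}'/\Gal(L/K)$ near $x$ with the intermediate quotient $\mc{Z} := \mc{Y}'/H$ (the analogous model for $X_M$ over $\mc{O}_M$) near $z := y^H$, where $y$ is by construction the unique preimage of $z$, so the single-preimage case of Theorem~\ref{Tatx} applies. Your identification $\widehat{\mc{O}}_{\mc{X},x} \cong \widehat{\mc{O}}_{\mc{Z},z}$ via $(\mathrm{Ind}_H^{G} N)^{G} = N^{H}$ is correct and is essentially the paper's observation that $\mc{Z} \to \mc{X}$ is \'etale at $z$. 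Also, your worry about minimality is unfounded: since $K \subseteq M \subseteq L$ and $L/K$ is minimal for $X$, the minimal extension of $M$ over which $X_M$ acquires semi-stable reduction is $LM = L$, so the setup of Theorem~\ref{Tatx} applies verbatim to $(X_M, L/M)$ with the same $\mc{Y}'$; your fallback of shrinking $L$ to some $L_M$ would in fact not mesh with the construction of $\mc{Y}'$ and $\mc{Z}$, but it is never needed.

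The genuine flaw is the claim that the identification carries ``the same multiplicities.'' The complete local rings, their resolutions, and the configurations of components are indeed identified, but multiplicity is not intrinsic to the ring: on $\mc{X}^{\reg}$ it is the order of vanishing of $\pi_K$, whereas Theorem~\ref{Tatx} applied to $(\mc{Z},z)$ over $M$ controls the order of vanishing of $\pi_M$. Since $k$ is algebraically closed, $M/K$ is totally ramified and $\pi_K$ is a unit times $\pi_M^{[M:K]}$, so a principal component $U$ of $\mc{X}^{\reg}_k$ over $x$ corresponds to a component $V$ over $z$ with $m_U = [M:K]\, m_V$, not $m_U = m_V$. Consequently your intermediate assertion that $m_U$ divides $[L:M]$ is false in general; the correct chain is $m_V \mid [L:M]$, hence $m_U = [M:K]\, m_V \mid [M:K][L:M] = [L:K]$. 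The desired conclusion survives, but only because your (too strong) intermediate claim happens to imply it; the missing factor $[M:K]$ is exactly the bookkeeping that the paper's proof makes explicit, there phrased via the generically \'etale map between the corresponding components and the uniformizers $\pi_{K'}$ versus $\pi_K$. You should repair this step rather than assert equality of multiplicities.
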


\begin{proof}
Pick a point $y \in p_1^{-1}(x)$, let $H \subseteq \Gal(L/K)$ be
the subgroup fixing $y$, and let $K' = L^H$.  Let $p_1'$ be the quotient
morphism $\mc{Y}' \to \mc{Y}' / H =: \mc{Z}$, let $z = p_1'(y)$, and
let $Z = X \times_K K'$.  Then $\mc{Z}$ is a model of $Z$ and the morphism $p_1$
factors as $$\mc{Y}' \stackrel{p_1'}{\to} \mc{Z} \stackrel{p_1''}{\to}
\mc{X}.$$  Since $(p_1')^{-1}(z)$ equals the singleton $\{y\}$ by construction, we may assume that Theorem~\ref{Tatx} holds for $(\mc{Z}, z)$.  We must prove Theorem~\ref{Tatx}
for $(\mc{X}, x)$.

By assumption, the 
morphism $p_1''$ is
\'{e}tale at $z$. 
By \'{e}tale base
change, the map $\mc{Z}^{\reg} := {\mc{X}^{\reg} \times_{\mc{X}} \mc{Z}}
\stackrel{r}{\to} \mc{Z}$ induced from $p_2$ gives a resolution of the singularity
at $z$, and $\mc{Z}^{\reg} \to \mc{X}^{\reg}$ is \'etale along $r^{-1}(z)$.  
Suppose $U$ is a principal irreducible component of  $\mc{X}^{\reg}_k$
lying in $p_2^{-1}(x)$, and $V$ is an irreducible component of $r^{-1}(z)$
lying above $U$.  Note that $V$ is principal
as well.  Since the restriction $V \to
U$ of $\mc{Z}^{\reg} \to \mc{X}^{\reg}$ is generically \'etale, the multiplicity
$m_U$ (which is calculated with respect to the uniformizer $\pi_K$)
is $[K':K]$ times the multiplicity
$m_V$ (which is calculated with respect to the uniformizer $\pi_{K'}$).  By
Theorem~\ref{Tatx} applied to $(\mc{Z},z)$, we have $m_V \mid [L:K']$.  
Thus $m_U$ divides $[L:K'][K':K] = [L : K]$. 
\end{proof}

\begin{proof}[Proof of Theorem~\ref{Tatx}]
  
  After applying Proposition~\ref{Petale}, Lemma~\ref{Lfixedcomponent}
  applies, taking $y$ to be the single point $p_1^{-1}(x)$.  So the action of $\Gal(L/K)$ on
  $\mc{Y}'$ lifts to an action on
  $\mc{D}'$ and on $\proj^1_L$.  By
  Lemma~\ref{Lhilb90}, a coordinate on $\proj^1_L$ can be chosen so
  that $\Gal(L/K)$ fixes this coordinate.

  By Lemma~\ref{Lfixedcomponent}, we may assume there is a closed
  point $\tilde{y} \in \mc{D}'$ above $y$ that is 
  fixed by the lift of the $\Gal(L/K)$-action.  
  Let $\Sigma$ be the set of irreducible components of $\mc{D}'$
  passing through $\tilde{y}$, and let $\mc{D}_{\Sigma}$ and $\phi$ be the model of $\proj^1_L$
  associated to $\Sigma$ and the injection on closed points as in Remark~\ref{Lsamelocalring}.
  The same remark gives us that $\hat{\mc{O}}_{\mc{D}', \tilde{y}} \cong
  \hat{\mc{O}}_{\mc{D}_{\Sigma}, \phi(\tilde{y})}$.

  Furthermore, since $\Gal(L/K)$ fixes
  $\tilde{y}$, it also acts on $\mc{D}_{\Sigma}$, and the Galois action on
  $\hat{\mc{O}}_{\mc{D}, \tilde{y}}$ is isomorphic to that on
  $\hat{\mc{O}}_{\mc{D}_{\Sigma}, \phi(\tilde{y})}$.  Applying Corollary~\ref{Carithmeticaction} to $\mc{D}_{\Sigma}$, all principal components
  of the exceptional fiber of the minimal regular resolution of the singularity of
  $\mc{D}_{\Sigma}/\Gal(L/K)$ at the image of $\phi(\tilde{y})$ under
  the quotient
  have multiplicity dividing $[L:K]$.  Thus the same is true for
  $\mc{D}'/\Gal(L/K)$ at the image of $\tilde{y}$, and thus in turn for
  $\mc{Y}'/(\Gal(L/K))$ at $x$.  This proves the proposition.
\end{proof}

\newpage

\bibliographystyle{alpha}
\bibliography{stabilityindex}

\end{document}